\newif\ifpdf
\numberwithin{equation}{section} \swapnumbers
\newtheorem{satz}{Satz}[section]
\newtheorem{theorem}[satz]{Theorem}
\newtheorem{proposition}[satz]{Proposition}
\newtheorem{corollary}[satz]{Corollary}
\newtheorem{lemma}[satz]{Lemma}
\newtheorem{assumption}[satz]{Assumption}
\newtheorem{definition}[satz]{Definition}
\newtheorem{remark}[satz]{Remark}
\newtheorem{example}[satz]{Example}
\newcommand{\bbr}{\mathbb{R}}
\newcommand{\bbn}{\mathbb{N}}
\newcommand{\bbj}{\mathbb{J}}
\newcommand{\bbt}{\mathbb{T}}
\newcommand{\bbm}{\mathbb{M}}
\newcommand{\bbs}{\mathbb{S}}
\newcommand{\cald}{\mathcal{D}}
\newcommand{\calk}{\mathcal{K}}
\newcommand{\call}{\mathcal{L}}
\newcommand{\calm}{\mathcal{M}}
\newcommand{\frc}{\mathfrak{C}}
\newcommand{\fri}{\mathfrak{I}}
\begin{document}

\hyphenation{rea-li-za-tion pa-ra-me-tri-za-ti-ons pa-ra-me-tri-za-ti-on}

\title[Affine realizations with affine state processes]{Affine realizations with affine state processes for stochastic partial differential equations}
\author{Stefan Tappe}
\address{Leibniz Universit\"{a}t Hannover, Institut f\"{u}r Mathematische Stochastik, Welfengarten 1, 30167 Hannover, Germany}
\email{tappe@stochastik.uni-hannover.de}
\thanks{I am grateful to Ozan Akdogan, Darrell Duffie, Damir Filipovi\'c and Matthias Sch\"{u}tt for valuable comments and discussions.}
\thanks{I am also grateful to an anonymous referee for the careful study of my paper and the valuable comments and suggestions.}
\begin{abstract}
The goal of this paper is to clarify when a stochastic partial differential equation with an affine realization admits affine state processes. This includes a characterization of the set of initial points of the realization. Several examples, as the HJMM equation from mathematical finance, illustrate our results.
\end{abstract}
\keywords{Stochastic partial differential equation, affine realization, affine state process, set of initial points}
\subjclass[2010]{60H15, 91G80}
\maketitle

\section{Introduction}\label{sec-intro}

The goal of this paper is to clarify when a semilinear stochastic partial differential equation (SPDE) of the form
\begin{align}\label{SPDE-manifold}
\left\{
\begin{array}{rcl}
dr_t & = & (A r_t + \alpha(r_t))dt + \sigma(r_{t})dW_t
\medskip
\\ r_0 & = & h_0
\end{array}
\right.
\end{align}
in the spirit of \cite{Da_Prato} driven by a $\bbr^n$-valued Wiener process $W$ (for some positive integer $n \in \bbn$) with an affine realization admits affine and admissible state processes. Affine realizations are particular types of finite dimensional realizations (FDRs). Denoting by $H$ the state space of (\ref{SPDE-manifold}), which we assume to be a separable Hilbert space, the idea of a FDR is that for each starting point $h_0 \in \fri$ (where $\fri \subset H$ denotes the set of initial points) we can express the weak solution $r$ to (\ref{SPDE-manifold}) locally as
\begin{align}\label{FDR}
r = \varphi(X)
\end{align}
for some $\bbr^d$-valued (typically time-inhomogeneous) process $X$ and a deterministic mapping $\varphi : \bbr^d \to H$, which makes the infinite dimensional SPDE (\ref{SPDE-manifold}) more tractable. If we have a representation of the form (\ref{FDR}), then the mapping $\varphi$ is the parametrization of an invariant submanifold $\calm$.

In this situation, the term \textit{affine} has a twofold meaning, which we shall now explain. We speak about an affine realization if for each starting point $h_0  \in \fri$ we can express the weak solution $r$ to (\ref{SPDE-manifold}) locally as
\begin{align}\label{decomp-intro}
r = \psi + X
\end{align}
with a deterministic curve $\psi : \bbt \to H$, where $\bbt = [0,\delta]$ for some $\delta > 0$, and a process $X$ having values in a state space of the form $\frc \oplus U$ with a finite dimensional proper cone $\frc \subset H$ and a finite dimensional subspace $U \subset H$. In this case, we also say that the SPDE (\ref{SPDE-manifold}) has an affine realization generated by $\frc \oplus U$, and the invariant manifold $(\calm_t)_{t \in \bbt}$ is a collection of affine spaces 
\begin{align}\label{foliations}
\calm_t = \psi(t) + \frc \oplus U, \quad t \in \bbt,
\end{align}
also called a foliation, and the curve $\psi$ is a parametrization of $(\calm_t)_{t \in \bbt}$.

We say that such an affine realization has affine and admissible state processes if for each starting point $h_0 \in \fri$ the process $X$ appearing in (\ref{decomp-intro}) is a (typically time-inhomogeneous) affine and admissible process on the state space $\frc \oplus U$. Here the term \emph{affine} means that the local characteristics of $X$ are affine, that is, the drift is affine and the volatility is square-affine, and the term \emph{admissible} means that the state space $\frc \oplus U$ is invariant for $X$, which means that the drift is inward pointing and the volatility is parallel to the boundary at boundary points of $\frc \oplus U$.\footnote[1]
{In the literature, a process is usually called an affine process if it is affine and admissible in the just described sense. For the purposes of this paper, we will carefully distinguish between the terms \emph{affine} and \emph{admissible}.}

There is a substantial literature about FDRs for SPDEs, in particular for the HJMM equation from mathematical finance. Here we use the name HJMM equation, as it is the Heath-Jarrow-Morton (HJM) model from \cite{HJM} with Musiela parametrization presented in \cite{Musiela}. The existence of FDRs for the HJMM equation driven by Wiener processes has intensively been studied in the literature, and we refer to \cite{Bj_Sv, Bj_La, Filipovic-Teichmann, Filipovic-Teichmann-royal} and references therein, and to \cite{Bjoerk} for a survey. As shown in \cite{Filipovic-Teichmann}, the existence of a FDR for the Wiener process driven HJMM equation implies the existence of an affine realization. The existence of affine realizations has been studied in \cite{Tappe-Wiener} for the HJMM equation driven by Wiener processes, in \cite{Tappe-Levy, Platen-Tappe} for the HJMM equation driven by L\'{e}vy processes, and in \cite{Tappe-affin-real} for general SPDEs driven by L\'{e}vy processes.

Affine processes have found growing interest due to their analytical tractability, in particular regarding applications in the field of mathematical finance. We refer, e.g., to \cite{Duffie-Kan, Filipovic-affine, DFS, Filipovic-inh, Filipovic-Mayerhofer} for affine processes on the canonical state space, and, e.g., to \cite{Cuchiero-et-al, Cuchiero-et-al-2, Spreij} for affine processes on more general state spaces. We also mention the recent papers \cite{Biagini} and \cite{Cu-Fo-Gn}, where HJM-type models driven by affine processes are studied. Note that our state space $\frc \oplus U$ corresponds to the canonical state space $\bbr_+^m \times \bbr^{d-m}$.

The goal of this paper is to clarify when the SPDE (\ref{SPDE-manifold}) admits an affine realization with affine and admissible state processes -- which has not been studied in the literature so far -- and to derive conditions on the parameters $(A,\alpha,\sigma)$ of (\ref{SPDE-manifold}) and on the set $\fri$ of initial points, which are necessary and sufficient. This includes a characterization of the structure of the set $\fri$, which we will use in order to construct this set for concrete examples.  

In order to outline the main results of this paper, let us first discuss how for a given invariant foliation $(\calm_t)_{t \in \bbt}$ the affine and admissibility properties of the state process $X$ appearing in (\ref{decomp-intro}) can be characterized by means of $(A,\alpha,\sigma)$; we refer to Section~\ref{sec-foliations} and Appendix~\ref{app-affine} for further details and the precise statements. Let $G \subset H$ be a closed subspace such that we have a direct decomposition $H = G \oplus V$ of the Hilbert space, where $V = C \oplus U$ and $C = \langle \frc \rangle$, the linear space generated by the cone.\footnote[2]{Later, the subspace $G$ will be uniquely determined by the set $\fri$ of initial points.} Without loss of generality, we may assume that the parametrization $\psi$ has its values in $G$, that is $\psi \in C^1(\bbt;G)$. Since the foliation $(\calm_t)_{t \in \bbt}$ is invariant for the SPDE (\ref{SPDE-manifold}), we obtain the well-known tangential conditions
\begin{align}\label{tang-domain}
\bbm &\subset \cald(A),
\\ \label{tang-beta} \beta(h) &\in T \calm_t \quad \text{for all $t \in \bbt$ and all $h \in \calm_t$,}
\\ \label{tang-sigma} \sigma(\bbm) &\subset V^n,
\end{align}
where $\bbm = \bigcup_{t \in \bbt} \calm_t$, the set $\cald(A)$ denotes the domain of the linear operator $A : \cald(A) \subset H \to H$ appearing in (\ref{SPDE-manifold}), we use the notation $\beta = A + \alpha$, and $T \calm_t := \frac{d}{dt} \psi(t) + V$ denotes the tangent space to $\calm$ at time $t$; cf., e.g., \cite{Tappe-Wiener}. Denoting by $\partial \bbm := \bbm \cap G$ the boundary of the foliation, we have the decomposition $\bbm = \partial \bbm \oplus \frc \oplus U$, and the tangential condition (\ref{tang-beta}) implies
\begin{align}\label{beta-inc-intro}
\beta_g(v) \in V \quad \text{for all $g \in \partial \bbm$ and all $v \in \frc \oplus U$,}
\end{align}
where we use the notation $\beta_g(v) := \beta(g+v) - \beta(g)$.
As we will see, for every starting point $h_0 \in \bbm$ from the foliation the state process $X$ appearing in (\ref{decomp-intro}) is a solution of the SDE
\begin{align}\label{SDE-X-intro}
\left\{
\begin{array}{rcl}
dX_t & = & \tilde{\beta}(t,X_t)dt + \tilde{\sigma}(t,X_t) dW_t \medskip
\\ X_0 & = & x_0
\end{array}
\right.
\end{align}
for some $x_0 \in \frc \oplus U$, where the coefficients $\tilde{\beta} : \bbs \times \frc \oplus U \to V$ and $\tilde{\sigma} : \bbs \times \frc \oplus U \to V^n$ for an appropriate time interval $\bbs \subset \bbr_+$ are given by
\begin{align}\label{beta-intro}
\tilde{\beta}(t,x) &= \Pi_V \beta(\psi(t_0 + t) + x),
\\ \label{sigma-intro} \tilde{\sigma}(t,x) &= \sigma(\psi(t_0 + t) + x),
\end{align}
for some $t_0 \in \bbr_+$. Here the projection $\Pi_V$ refers to the direct sum decomposition $H = G \oplus V$. From (\ref{SDE-X-intro})--(\ref{sigma-intro}) we see that the state process $X$ in (\ref{decomp-intro}) is affine if and only if                                                          
\begin{itemize}
\item for each $g \in \partial \bbm$ the mapping
\begin{align}\label{map-beta}
v \mapsto \Pi_V \beta(g+v) : \frc \oplus U \to V
\end{align}
is affine, and

\item for each $g \in \partial \bbm$ the mapping 
\begin{align}\label{map-sigma}
v \mapsto \sigma(g+v) : \frc \oplus U \to V^n 
\end{align}
is square-affine,
\end{itemize}
and that the state process $X$ in (\ref{decomp-intro}) is admissible if and only if
\begin{itemize}
\item for each $g \in \partial \bbm$ the mapping (\ref{map-beta}) is inward pointing, and

\item for each $g \in \partial \bbm$ the mapping (\ref{map-sigma}) is parallel.
\end{itemize}
Here the term \emph{square-affine} means that the mapping
\begin{align}\label{sigma-square-intro}
v \mapsto \sigma^2(g+v) := \sigma(g+v) \sigma^*(g+v) : \frc \oplus U \to L(V) 
\end{align}
is affine. In (\ref{sigma-square-intro}) we use the identification $V^n \cong L(\bbr^n,V)$, and concerning the adjoint operator, on $\bbr^n$ we consider the standard inner product, and on $V$ we consider a canonical inner product $\langle \cdot,\cdot \rangle_V$, which is defined by means of the original inner product $\langle \cdot,\cdot \rangle_H$ and the cone $\frc$. Namely, the unique normed basis of the proper cone $\frc$ becomes an orthonormal basis of $C$ under $\langle \cdot,\cdot \rangle_V$, and on $U$ the inner product $\langle \cdot,\cdot \rangle_V$ coincides with the original inner product $\langle \cdot,\cdot \rangle_H$ of the Hilbert space.

Furthermore, the mapping (\ref{map-beta}) is called inward pointing at boundary points of $\frc \oplus U$ (in short \emph{inward pointing}) if
\begin{align*}
\langle \eta,\Pi_V \beta(g+v) \rangle_V \geq 0 \quad \text{for all $v \in \mathfrak{C} \oplus U$ and all $\eta \in \mathfrak{C}$ with $\langle \eta,v \rangle_V = 0$,}
\end{align*}
and the mapping (\ref{map-sigma}) is called parallel to the boundary at boundary points of $\frc \oplus U$ (in short \emph{parallel}) if for each $k=1,\ldots,n$ we have
\begin{align*}
\langle \eta,\sigma_k(g+v) \rangle_V = 0 \quad \text{for all $v \in \mathfrak{C} \oplus U$ and all $\eta \in \mathfrak{C}$ with $\langle \eta,v \rangle_V = 0$.}
\end{align*}
Now, let us present our main result regarding the existence of affine realizations with affine and admissible state processes; we refer to Section~\ref{sec-affine-real} for further details and the precise statements. Recall that, besides the state space $\frc \oplus U$, we fix a set $\fri \subset H$ of initial points. Our essential structural assumption on this set is that it admits a decomposition $\fri = \partial \fri \oplus \frc \oplus U$ with a subset $\partial \fri \subset H$, which we call the boundary of $\fri$, and that $H = G \oplus V$, where $G := \overline{\langle \partial \fri \rangle}$. Conditions (\ref{tang-domain}), (\ref{tang-sigma}), (\ref{beta-inc-intro}) and our explanations concerning the mappings (\ref{map-beta}) and (\ref{map-sigma}) lead us to Theorem~\ref{thm-main-1}, which states that the SPDE (\ref{SPDE-manifold}) has an affine realization with affine and admissible state processes if and only if we have $\mathfrak{I} \subset \cald(A)$ and $\sigma(\overline{\mathfrak{I}}) \subset V^n$, and for each $g \in \partial \mathfrak{I}$ we have
\begin{align}\label{beta-inc-V}
\beta_g(v) &\in V, \quad v \in \mathfrak{C} \oplus U,
\\ \label{beta-affine-inward} v &\mapsto \Pi_V \beta(g + v) : \mathfrak{C} \oplus U \to V \text{ is affine and inward pointing,}
\\ \label{sigma-affine-parallel} v &\mapsto \sigma(g + v) : \mathfrak{C} \oplus U \to V^n \text{ is square-affine and parallel.}
\end{align}
In applications, we often have the situation that the drift is of the form $\alpha = S \sigma^2$ with a linear operator $S \in L(L(V),H)$; in particular, this is case for the mentioned HJMM equation. In this situation, we will derive conditions on the parameters $(A,\sigma)$ and on the set $\fri$ of initial points. The structure $\alpha = S \sigma^2$ of the drift is tailor-made for the existence of affine state processes, provided that the SPDE (\ref{SPDE-manifold}) has an affine realization. Let us briefly outline our main result in this situation; we refer to Section~\ref{sec-drift} for further details and the precise statements. If the mapping $v \mapsto \sigma(g + v)$ in (\ref{sigma-affine-parallel}) is square-affine, then the mapping $v \mapsto \Pi_V \beta(g + v)$ in (\ref{beta-affine-inward}) is affine. However, if the mapping $v \mapsto \sigma(g + v)$ in (\ref{sigma-affine-parallel}) is square-affine and parallel, this does generally not imply that the mapping $v \mapsto \Pi_V \beta(g + v)$ in (\ref{beta-affine-inward}) is affine and inward pointing; as we will show, this is the case if and only if for each $g \in \partial \fri$ we have
\begin{align}\label{cond-AR-1}
\Pi_V(Ag + S \sigma^2(g)) &\in \mathfrak{C} \oplus U,
\\ \label{cond-AR-2} Ac + S \sigma_g^2(c) &\in (\mathfrak{C} + \langle c \rangle) \oplus U, \quad c \in \partial \mathfrak{C},
\\ \label{cond-AR-3} Au &\in U, \quad u \in U,
\end{align}
where $\partial \frc$ denotes the edges of the cone $\frc$.
This leads us to our next result (see Theorem~\ref{thm-main-2}) which states that the SPDE (\ref{SPDE-manifold}) with drift being of the form $\alpha = S \sigma^2$ has an affine realization with affine and admissible state processes if and only if we have $\mathfrak{I} \subset \cald(A)$, and for each $g \in \partial \mathfrak{I}$ we have (\ref{sigma-affine-parallel})--(\ref{cond-AR-3}).

Condition (\ref{beta-inc-V}) from our general result (Theorem~\ref{thm-main-1}) has further consequences in the situation where the drift is of the form $\alpha = S \sigma^2$. In order to outline these consequences, we define the finite dimensional subspace $\calk \subset L(V)$ as
$\calk := S^{-1}(V) \cap R$,
where $R := \langle \sigma^2(\fri) \rangle$, and the finite dimensional subspace $\call \subset L(V,L(V))$ as
$\call := L(V,\calk)$.
Then we have the following results:
\begin{itemize}
\item If the SPDE (\ref{SPDE-manifold}) has an affine realization with affine (but not necessarily admissible) state processes, then the mapping
\begin{align}\label{mapping-const}
g \mapsto \sigma_g^2 : \partial \fri \to L(V,L(V)),
\end{align}
where we use the notation $\sigma_g^2(v) := \sigma^2(g+v) - \sigma^2(g)$,
is constant modulo $\call$; see Proposition~\ref{prop-const-mod}. In particular, if $\calk = \{ 0 \}$, then the mapping (\ref{mapping-const}) must be constant.

\item If the SPDE (\ref{SPDE-manifold}) has an affine realization, and in addition we have $V \cap S(R) = \{ 0 \}$ and ${\rm ker}(S) \cap R = \{ 0 \}$, then the SPDE (\ref{SPDE-manifold}) has an affine realization with affine (but not necessarily admissible) state processes; see Proposition~\ref{prop-Damir}. This result can be regarded as a generalization of \cite[Prop.~9.3]{Filipovic}, which is a result for interest rate models.
\end{itemize}
In Section~\ref{sec-suff}, we will also consider the structure $\alpha = S \sigma^2$ of the drift and provide sufficient conditions on the parameters $(A,\sigma)$ for the existence of an affine realization with affine and admissible state processes, without specifying the set $\fri$ of initial points in advance. Instead of that, our result (Proposition~\ref{prop-suff}) provides a construction of the set of initial points, and we will see that this construction of $\fri$ is the largest possible. We will apply the just described result (Proposition~\ref{prop-suff}) for the construction of the maximal set of initial points for concrete examples of SPDEs like the Hull-White extension of the Cox-Ingersoll-Ross model from interest rate theory.

The remainder of this paper is organized as follows. In Section~\ref{sec-foliations} we provide the required results about invariant foliations for SPDEs. In Section~\ref{sec-affine-real} we examine the existence of affine realizations with affine and admissible state processes. In Section~\ref{sec-drift} we study the situation with the mentioned structure $\alpha = S \sigma^2$ of the drift, and in Section~\ref{sec-suff} we provide sufficient conditions for the existence of affine realizations with affine and admissible state processes, and construct the maximal set of initial points. In Section~\ref{sec-HJMM} we present the HJMM equation and show how it fits into our framework. In Section~\ref{sec-HJMM-examples} we present examples of the HJMM equation with affine realizations and affine and admissible state processes, and construct the maximal sets of initial curves. In Section~\ref{sec-nat-sci} we treat linear SPDEs and present further examples arising from natural sciences. For convenience of the reader, we provide the crucial results about convex cones and affine mappings in Appendix~\ref{app-affine}.

\section{Invariant foliations for SPDEs}\label{sec-foliations}

In this section, we provide the required results about invariant foliations for SPDEs. For further details about SPDEs of the type (\ref{SPDE-manifold}) we refer to \cite{Da_Prato}, \cite{Prevot-Roeckner} or \cite{Atma-book}, and for more details about invariant foliations, we refer to \cite{Tappe-Wiener}. Let $H$ be a separable Hilbert space and let $A : \mathcal{D}(A) \subset H \rightarrow H$ be the infinitesimal generator of a $C_0$-semigroup on $H$. Let $\alpha : H \to H$ and $\sigma : H \to H^n$ (for some positive integer $n \in \bbn$) be continuous mappings.

\begin{remark}\label{remark-solutions}
We call a filtered probability space $\mathbb{B} = (\Omega,\mathcal{F},(\mathcal{F}_t)_{t \in \bbr_+},\mathbb{P})$ satisfying the usual conditions a stochastic basis. In this paper,
the concepts of strong, weak and mild solutions to (\ref{SPDE-manifold}) are understood in a martingale sense (cf. \cite[Chap.~8]{Da_Prato}), that is, we do not fix a stochastic basis $\mathbb{B}$ in advance, but rather call a pair $(r,W)$ -- where $r$ is a continuous, adapted process and $W$ a $\bbr^n$-valued standard Wiener process on some stochastic basis $\mathbb{B}$ -- a strong, weak or mild solution to (\ref{SPDE-manifold}), if the process $r$ has the respective property.
\end{remark}

Let $\frc \subset H$ be a finite dimensional proper convex cone (see Appendix~\ref{app-affine} for further details) and let $U \subset H$ be a finite dimensional subspace such that $C \cap U = \{ 0 \}$, where $C = \langle \frc \rangle$. We assume that the subspace $V = C \oplus U$ satisfies $\dim V \geq 1$. Let $G \subset H$ be a closed subspace such that the Hilbert space admits the direct sum decomposition $H = G \oplus V$.
We introduce the set of intervals
\begin{align*}
\mathbb{J} := \{ [0, \delta] : \delta \in (0, \infty) \} \cup \{ \bbr_+ \}.
\end{align*}
For what follows, we fix an interval $\mathbb{T} \in \mathbb{J}$. For $t_0 \in \mathbb{T}$ we define the interval $\mathbb{T}_{t_0} \in \bbj$ as
\begin{align*}
\mathbb{T}_{t_0} := \{ t \in \mathbb{R}_+ : t_0 + t \in \mathbb{T} \}.
\end{align*}

\begin{definition}
A family $(\mathcal{M}_t)_{t \in \mathbb{T}}$ of subsets
$\mathcal{M}_t \subset H$, $t \in \mathbb{T}$ is called a \emph{foliation}
generated by $\mathfrak{C} \oplus U$, if there exists a mapping $\psi \in C^1(\mathbb{T};G)$ such that
\begin{align}
\mathcal{M}_t = \psi(t) \oplus \mathfrak{C} \oplus U \quad \text{for all $t \in \mathbb{T}$.}
\end{align}
The mapping $\psi$ is called a \emph{parametrization} of the foliation
$(\mathcal{M}_t)_{t \in \mathbb{T}}$.
\end{definition}

In what follows, let $(\mathcal{M}_t)_{t \in \mathbb{T}}$ be a foliation
generated by $\mathfrak{C} \oplus U$.

\begin{remark}
Note that the parametrization $\psi$ of $(\mathcal{M}_t)_{t \in \mathbb{T}}$ is unique, because we demand that it has its values in $G$.
\end{remark}

\begin{definition}
We define the union of all leaves $\bbm := \bigcup_{t \in \bbt} \calm_t$ and the boundary $\partial \bbm := \bbm \cap G$.
\end{definition}

Note that we have the decomposition $\bbm = \partial \bbm \oplus \frc \oplus U$.

\begin{definition}
For each $t \in \bbt$ we define the tangent space $T \calm_t := \frac{d}{dt} \psi(t) \oplus V$.
\end{definition}

\begin{definition}\label{def-inv-foliation}
The foliation $(\mathcal{M}_t)_{t \in \bbt}$ is called \emph{invariant} for the SPDE (\ref{SPDE-manifold}) if for all $t_0 \in
\bbt$ and $h_0 \in \mathcal{M}_{t_0}$ there is a weak solution $r = (r_t)_{t \in \mathbb{T}_{t_0}}$ to (\ref{SPDE-manifold}) with $r_0 = h_0$ such that $r_{\bullet} \in \calm_{t_0 + \bullet}$ up to an evanescent set\footnote[3]{A random set $A \subset \Omega \times \mathbb{R}_+$ is called \emph{evanescent} if the set $\{ \omega \in \Omega : (\omega,t) \in A \text{ for some } t \in \mathbb{R}_+ \}$ is a $\mathbb{P}$-nullset, cf. \cite[1.1.10]{Jacod-Shiryaev}.}.
\end{definition}

For what follows, we define the mapping $\beta := A + \alpha : \mathcal{D}(A) \rightarrow H$.

\begin{proposition}\label{prop-inv-foli-pre}
The following statements are true:
\begin{enumerate}
\item If the foliation $(\mathcal{M}_t)_{t \in \mathbb{T}}$ is invariant in the for (\ref{SPDE-manifold}), then we have (\ref{tang-domain})--(\ref{tang-sigma}).

\item If we have (\ref{tang-domain})--(\ref{tang-sigma}), then we have (\ref{beta-inc-intro}), and $A$ and $\beta$ are continuous on $\bbm$.
\end{enumerate}
\end{proposition}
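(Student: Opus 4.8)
The plan is to prove Proposition~\ref{prop-inv-foli-pre} in two parts, each of which is a localization argument reducing an infinite-dimensional invariance statement to a finite-dimensional tangency condition. For part (1), I would start from the definition of invariance: for each $t_0 \in \bbt$ and $h_0 \in \calm_{t_0}$ there is a weak solution $r$ with $r_0 = h_0$ staying in the foliation. The first step is to justify that such a weak solution is in fact a strong solution. This should follow because the process $r$ takes values in $\bbm = \partial\bbm \oplus \frc \oplus U$, which is a subset of a finite-dimensional affine subspace shifted along the $C^1$ curve $\psi$; applying a suitable finite rank projection (the projection $\Pi_V$ associated to $H = G \oplus V$, together with the fact that $\psi$ lives in $G$) one sees that $r_t - \psi(t_0+t)$ evolves in the finite-dimensional space $V$, and testing against elements of $\cald(A^*)$ in the weak formulation forces $r_t \in \cald(A)$ for (Lebesgue-almost) all $t$, hence everywhere by continuity, giving (\ref{tang-domain}) (i.e. $\bbm \subset \cald(A)$). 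Once we know $r$ is a strong solution living in $\calm_{t_0+\bullet}$, we apply It\^o's formula to the $G$-component and to the $V$-component separately: the $V$-valued semimartingale $r_t - \psi(t_0+t)$ has finite variation part $\beta(r_t) - \frac{d}{dt}\psi(t_0+t)$ and martingale part driven by $\sigma(r_t)$, and since this semimartingale stays in the fixed finite-dimensional space $V$ (indeed in $\frc \oplus U$), standard results on semimartingales confined to a submanifold (or simply matching the orthogonal-to-$V$ components to zero) yield that the drift $\beta(r_t)$ must lie in the tangent space $\frac{d}{dt}\psi(t_0+t) + V = T\calm_{t_0}$ and that $\sigma(r_t) \in V^n$, at the point $h_0 = r_0$. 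Letting $h_0$ range over $\calm_{t_0}$ and $t_0$ over $\bbt$ gives (\ref{tang-beta}) and (\ref{tang-sigma}).

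For part (2), assume (\ref{tang-domain})--(\ref{tang-sigma}). Deriving (\ref{beta-inc-intro}) is essentially algebraic: fix $g \in \partial\bbm$ and $v \in \frc \oplus U$; then $g \in \calm_{t_0}$ and $g + v \in \calm_{t_0}$ for the unique $t_0$ with $g = \psi(t_0)$, so by (\ref{tang-beta}) both $\beta(g)$ and $\beta(g+v)$ lie in $T\calm_{t_0} = \frac{d}{dt}\psi(t_0) + V$; subtracting, $\beta_g(v) = \beta(g+v) - \beta(g) \in V$, which is exactly (\ref{beta-inc-intro}). For the continuity of $A$ and $\beta$ on $\bbm$: restricted to the affine slice $\calm_{t_0} = \psi(t_0) + \frc \oplus U$, the identity (\ref{beta-inc-intro}) shows $\beta$ maps $\calm_{t_0}$ into the translate $\beta(\psi(t_0)) + V$, a finite-dimensional affine space, so $\beta|_{\calm_{t_0}}$ is a continuous map between finite-dimensional spaces (continuity of $\beta = A + \alpha$ on $\cald(A)$ is inherited from $\alpha$ being continuous and $A$ being closed, but here we get more). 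Then $A = \beta - \alpha$ on $\bbm$, and since $\alpha$ is globally continuous, $A$ is continuous on each slice; a short argument using that $\psi \in C^1$ and the slices fit together continuously in $t_0$ upgrades this to continuity of $A$ and $\beta$ on all of $\bbm$. The cleanest route for the closedness/continuity issue is: $A$ is closed, its restriction to the finite-dimensional (affine) set $\bbm \subset \cald(A)$ has finite-dimensional range by (\ref{beta-inc-intro}), and a closed operator with finite-dimensional domain-slice behaves like a bounded operator there.

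I expect the main obstacle to be the first step of part (1): rigorously passing from a weak solution to a strong solution and establishing $\bbm \subset \cald(A)$, since a priori the weak solution need not satisfy $r_t \in \cald(A)$ at all. The key idea is to exploit that $r$ is confined to a finite-dimensional foliation: writing $r_t = \psi(t_0 + t) + Y_t$ with $Y$ taking values in the finite-dimensional space $\frc \oplus U \subset V$, and using the direct decomposition $H = G \oplus V$ together with the weak formulation tested against $\zeta \in \cald(A^*)$, one can show the pairing $\langle r_t, \zeta \rangle$ is absolutely continuous with a derivative expressed through $A^*\zeta$; because $Y$ lives in a fixed finite-dimensional subspace and $\psi$ is $C^1$, the map $t \mapsto \langle r_t, A^* \zeta\rangle$ extends to a bounded linear functional of $Y_t$, and a duality/closed-graph argument (or the argument in \cite{Tappe-Wiener} which this proposition cites, "cf., e.g., \cite{Tappe-Wiener}") then yields $r_t \in \cald(A)$ and identifies $A r_t$. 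Modulo this technical point, the remaining steps — It\^o's formula, matching components, and the algebraic derivation of (\ref{beta-inc-intro}) and the continuity claims — are routine and follow the pattern already established for invariant manifolds in the cited literature.
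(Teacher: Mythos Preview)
Your proposal is correct and follows the same route as the paper: the paper's own proof observes that (\ref{beta-inc-intro}) is an immediate algebraic consequence of (\ref{tang-domain}) and (\ref{tang-beta}) --- exactly your subtraction argument with $\beta(g+v)-\beta(g)\in V$ --- and declares the remaining assertions ``analogous to \cite[Thm.~2.11]{Tappe-Wiener}'' and omits them. Your sketch (promote a weak solution confined to the finite-dimensional foliation to a strong one via the decomposition $r_t=\psi(t_0+t)+Y_t$ with $Y$ in $V$, then read off (\ref{tang-domain})--(\ref{tang-sigma}) by matching $G$- and $V$-components of the It\^o decomposition; deduce continuity of $A$ and $\beta$ on $\bbm$ from closedness of $A$ and finite dimensionality of the slices) is precisely the argument that reference contains, so you are reconstructing what the paper chose not to reproduce.
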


\begin{proof}
It is obvious that (\ref{tang-domain}) and (\ref{tang-beta}) imply (\ref{beta-inc-intro}). The proof of the remaining assertions is analogous to that of \cite[Thm.~2.11]{Tappe-Wiener}, and therefore omitted.
\end{proof}

For the rest of this section, suppose that these conditions (\ref{tang-domain})--(\ref{tang-sigma}) are fulfilled. The upcoming two definitions correspond to our discussion from Section \ref{sec-intro}. We refer to Appendix~\ref{app-affine} for further details and explanations concerning the following concepts.

\begin{definition}
The foliation $(\calm_t)_{t \in \bbt}$ is called \emph{affine} for the SPDE (\ref{SPDE-manifold}) if for each $g \in \partial \bbm$ the mapping (\ref{map-beta}) is affine and the mapping (\ref{map-sigma}) is square-affine.
\end{definition}

\begin{definition}
The foliation $(\calm_t)_{t \in \bbt}$ is called \emph{affine and admissible} for the SPDE (\ref{SPDE-manifold}) if for each $g \in \partial \bbm$ the mapping (\ref{map-beta}) is affine and inward pointing and the mapping (\ref{map-sigma}) is square-affine and parallel.
\end{definition}

\begin{proposition}\label{prop-SDE}
Suppose that for each $t_0 \in \bbt$ and each $x_0 \in \frc \oplus U$ the SDE (\ref{SDE-X-intro}) has a $\frc \oplus U$-valued strong solution $X = (X_t)_{t \in \bbt_{t_0}}$ (in the sense of Remark~\ref{remark-solutions}), where $\tilde{\beta} : \bbt_{t_0} \times \frc \oplus U \to V$ and $\tilde{\sigma} : \bbt_{t_0} \times \frc \oplus U \to V^n$ are given by (\ref{beta-intro}) and (\ref{sigma-intro}). Then the foliation $(\calm_t)_{t \in \bbt}$ is invariant for (\ref{SPDE-manifold}).
\end{proposition}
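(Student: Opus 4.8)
The plan is to verify Definition~\ref{def-inv-foliation} directly: given $t_0 \in \bbt$ and $h_0 \in \calm_{t_0}$, I must exhibit a weak solution $r$ to (\ref{SPDE-manifold}) starting at $h_0$ that stays in the foliation. Write $h_0 = \psi(t_0) + x_0$ with $x_0 \in \frc \oplus U$ (using the decomposition $\calm_{t_0} = \psi(t_0) \oplus \frc \oplus U$), take the strong solution $X$ of the SDE (\ref{SDE-X-intro}) on $\bbt_{t_0}$ furnished by the hypothesis, together with its driving Wiener process $W$ on some stochastic basis $\bbb$, and set
\begin{align*}
r_t := \psi(t_0 + t) + X_t, \qquad t \in \bbt_{t_0}.
\end{align*}
By construction $r_t \in \psi(t_0+t) \oplus \frc \oplus U = \calm_{t_0+t}$ for all $t$, so $r_\bullet \in \calm_{t_0+\bullet}$ identically; the evanescent-set issue is vacuous here. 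It remains to check that $(r,W)$ is a weak (martingale-sense) solution to (\ref{SPDE-manifold}) with $r_0 = h_0$.

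The main work is this last verification. Recall that a weak solution means $r$ is $H$-valued, continuous, adapted, $r_t \in \cald(A)$ for all $t$ (this uses $\bbm \subset \cald(A)$, i.e.\ (\ref{tang-domain})), and for every $\zeta \in \cald(A^*)$ one has
\begin{align*}
\langle \zeta, r_t \rangle = \langle \zeta, h_0 \rangle + \int_0^t \big( \langle A^* \zeta, r_s \rangle + \langle \zeta, \alpha(r_s) \rangle \big)\, ds + \int_0^t \langle \zeta, \sigma(r_s) \rangle\, dW_s.
\end{align*}
Since $X$ solves (\ref{SDE-X-intro}) with coefficients (\ref{beta-intro})--(\ref{sigma-intro}), we have, as an identity in $V \subset H$,
\begin{align*}
X_t = x_0 + \int_0^t \Pi_V \beta(\psi(t_0+s)+X_s)\, ds + \int_0^t \sigma(\psi(t_0+s)+X_s)\, dW_s.
\end{align*}
Adding $\psi(t_0+t) = \psi(t_0) + \int_0^t \psi'(t_0+s)\, ds$ and recalling $r_s = \psi(t_0+s)+X_s$, $h_0 = \psi(t_0)+x_0$, gives
\begin{align*}
r_t = h_0 + \int_0^t \big( \psi'(t_0+s) + \Pi_V \beta(r_s) \big)\, ds + \int_0^t \sigma(r_s)\, dW_s.
\end{align*}
Now I pair with $\zeta \in \cald(A^*)$ and must match this against the weak-solution identity, i.e.\ show
\begin{align*}
\langle \zeta, \psi'(t_0+s) + \Pi_V \beta(r_s) \rangle = \langle A^* \zeta, r_s \rangle + \langle \zeta, \alpha(r_s) \rangle = \langle \zeta, A r_s \rangle + \langle \zeta, \alpha(r_s) \rangle = \langle \zeta, \beta(r_s) \rangle
\end{align*}
for all $s$, where the middle equality uses $r_s \in \cald(A)$. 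Equivalently, $\psi'(t_0+s) + \Pi_V \beta(r_s) = \beta(r_s)$, i.e.\ $\psi'(t_0+s) = \Pi_G \beta(r_s)$, where $\Pi_G = I - \Pi_V$ is the projection onto $G$. This is exactly the tangential condition (\ref{tang-beta}): since $\beta(r_s) \in T\calm_{t_0+s} = \frac{d}{dt}\psi(t_0+s) + V$, its $G$-component equals $\frac{d}{dt}\psi(t_0+s) = \psi'(t_0+s)$. So the drift identity holds, and the diffusion terms match trivially because $\sigma(r_s) \in V^n \subset H^n$ by (\ref{tang-sigma}) with $r_s \in \bbm$. Finally, the relevant stochastic and Lebesgue integrals are well-defined: $A$ and $\beta$ are continuous on $\bbm$ by Proposition~\ref{prop-inv-foli-pre}(2), $\sigma$ is continuous, and $r$ has continuous paths with values in $\bbm$, so along each path the integrands are bounded on compacts.

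The one genuine subtlety — and the step I expect to cost the most care — is that the integral identity for $X$ in (\ref{SDE-X-intro}) is a priori an identity in the finite-dimensional space $V$ (equipped with $\langle\cdot,\cdot\rangle_V$), while the weak-solution identity for $r$ is an identity tested against $\cald(A^*)$ in $H$; I need to confirm that the canonical inclusion $V \hookrightarrow H$ and the finiteness of $\dim V$ let me pass between "$X$ solves a $V$-valued SDE strongly" and "$\langle \zeta, r_\bullet\rangle$ satisfies the scalar weak-solution equations for every $\zeta \in \cald(A^*)$", using that $\psi \in C^1(\bbt;G)$ so $\psi$ contributes an absolutely continuous, finite-variation term. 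Since all of $\psi'$, $\Pi_V\beta$, $\sigma$ land in the fixed finite-dimensional $V$, this is routine but should be spelled out; the rest is bookkeeping with the decomposition $H = G \oplus V$ and the three tangential conditions. This argument is essentially the converse direction of the analysis leading to (\ref{SDE-X-intro})--(\ref{sigma-intro}) in Section~\ref{sec-intro}, and is parallel to the corresponding step in \cite{Tappe-Wiener}.
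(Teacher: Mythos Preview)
Your proposal is correct and follows essentially the same route as the paper: decompose $h_0 = \psi(t_0) + x_0$, set $r_t := \psi(t_0+t) + X_t$, and use the tangential condition (\ref{tang-beta}) to identify $\psi'(t_0+s) = \Pi_G \beta(r_s)$, so that the drift $\psi'(t_0+s) + \Pi_V \beta(r_s)$ collapses to $\beta(r_s) = Ar_s + \alpha(r_s)$. The only cosmetic difference is that the paper records the resulting integral identity in $H$ directly and calls $r$ a \emph{strong} solution (legitimate since $r_s \in \bbm \subset \cald(A)$), whereas you detour through the weak formulation by pairing with $\zeta \in \cald(A^*)$; your own computation already yields the strong-solution identity before that pairing, so the weak-solution framing and the discussion of the inclusion $V \hookrightarrow H$ are not needed.
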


\begin{proof}
Let $t_0 \in \bbt$ and $h_0 \in \mathcal{M}_{t_0}$ be arbitrary. Then there exists a unique $x_0 \in \mathfrak{C} \oplus U$ such that $h_0 = \psi(t_0) + x_0$. We define the process $r = (r_t)_{t \in \bbt_{t_0}}$ as $r_t := \psi(t_0 + t) + X_t$, where $X = (X_t)_{t \in \bbt_{t_0}}$ is a $\frc \oplus U$-valued strong solution to (\ref{SDE-X-intro}) with $X_0 = x_0$. Then we have $r_{\bullet} \in \calm_{t_0 + \bullet}$. Now, let $t \in \bbt_{t_0}$ be arbitrary. By (\ref{tang-beta}) we have
\begin{align*}
\Pi_G \beta(r_s) = \frac{d}{ds} \psi(t_0 + s) \quad \text{for all $s \in [0,t]$,}
\end{align*}
and hence
\begin{align*}
r_t &= \psi(t_0) + \big( \psi(t_0 + t) - \psi(t_0) \big) + x_0 + \int_0^t \tilde{\beta}(s,X_s) ds + \int_0^t \tilde{\sigma}(s,X_s) dW_s
\\ &= h_0 + \int_0^t \frac{d}{ds} \psi(t_0 + s) ds + \int_0^t \Pi_V \beta(\psi(t_0 + s) + X_s) ds 
\\ &\quad + \int_0^t \sigma(\psi(t_0 + s) + X_s) dW_s
\\ &= h_0 + \int_0^t \Pi_G \beta(r_s) ds + \int_0^t \Pi_V \beta(r_s) ds + \int_0^t \sigma(r_s)dW_s
\\ &= h_0 + \int_0^t \big( A r_s + \alpha(r_s) \big) ds + \int_0^t \sigma(r_s)dW_s,
\end{align*}
showing that $r$ is a strong solution to (\ref{SPDE-manifold}) with $r_0 = h_0$.
\end{proof}

\begin{proposition}\label{prop-foliation-affine-inv}
If the foliation $(\calm_t)_{t \in \bbt}$ is affine and admissible for (\ref{SPDE-manifold}), then it is also invariant for (\ref{SPDE-manifold}).
\end{proposition}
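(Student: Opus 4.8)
The plan is to reduce the assertion to Proposition~\ref{prop-SDE}. According to that result, it suffices to prove that for every $t_0 \in \bbt$ and every $x_0 \in \frc \oplus U$ the SDE (\ref{SDE-X-intro}), with coefficients $\tilde{\beta}$ and $\tilde{\sigma}$ given by (\ref{beta-intro}) and (\ref{sigma-intro}), admits a $\frc \oplus U$-valued strong solution in the sense of Remark~\ref{remark-solutions}; indeed, granting this, Proposition~\ref{prop-SDE} immediately yields that $(\calm_t)_{t \in \bbt}$ is invariant for (\ref{SPDE-manifold}).

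First I would identify the structure of the coefficients. Since $0 \in \frc$, for every $t \in \bbt$ we have $\psi(t) = \psi(t) + 0 \in \calm_t \subset \bbm$, and as $\psi(t) \in G$ this gives $\psi(t) \in \bbm \cap G = \partial \bbm$. Hence, for fixed $t_0 \in \bbt$ and $t \in \bbt_{t_0}$, the element $g := \psi(t_0 + t)$ belongs to $\partial \bbm$, and the maps $x \mapsto \tilde{\beta}(t,x) = \Pi_V \beta(g + x)$ and $x \mapsto \tilde{\sigma}(t,x) = \sigma(g + x)$ are exactly the maps (\ref{map-beta}) and (\ref{map-sigma}) for this $g$. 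Because the foliation is affine and admissible, $x \mapsto \tilde{\beta}(t,x)$ is affine and inward pointing and $x \mapsto \tilde{\sigma}(t,x)$ is square-affine and parallel, for each $t$. Moreover, by Proposition~\ref{prop-inv-foli-pre}(2) the mapping $\beta$ is continuous on $\bbm$, and $\sigma$ is continuous by assumption, so $\tilde{\beta}$ and $\tilde{\sigma}$ are jointly continuous in $(t,x)$; being affine resp. square-affine in $x$, they have at most linear growth in $x$, locally uniformly in $t$, which rules out explosion.

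Next I would invoke the existence theory for (time-inhomogeneous) affine processes. Under the canonical inner product $\langle \cdot,\cdot \rangle_V$ introduced in Section~\ref{sec-intro}, the normed generators of $\frc$ together with an orthonormal basis of $U$ provide a linear isomorphism $\frc \oplus U \cong \bbr_+^m \times \bbr^{d-m}$, where $m = \dim C$ and $d = \dim V$, under which the inward-pointing property of $\tilde{\beta}$ and the parallel property of $\tilde{\sigma}$ translate precisely into the admissibility conditions for an affine process on the canonical state space, while the affine resp.\ square-affine structure translates into affine drift and square-affine diffusion. Existence of a (time-inhomogeneous) affine process with these local characteristics that stays in $\bbr_+^m \times \bbr^{d-m}$ then follows from the standard theory; see Appendix~\ref{app-affine}, as well as \cite{DFS, Filipovic-inh, Cuchiero-et-al, Spreij}. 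Transporting this solution back along the isomorphism yields the required $\frc \oplus U$-valued strong solution to (\ref{SDE-X-intro}), and Proposition~\ref{prop-SDE} completes the proof.

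The main obstacle is precisely the existence of a \emph{state-space-preserving} solution to (\ref{SDE-X-intro}): the square-affine volatility is of square-root type and hence not Lipschitz near the edges of the cone, so classical SDE existence and uniqueness theorems do not apply, and the fact that the solution remains in $\frc \oplus U$ is not a byproduct of an a priori estimate but rather the very content of the admissibility (inward pointing, parallel) conditions. The time-inhomogeneity introduced through $\psi$ adds a further, routine, layer, handled by localizing on compact subintervals of $\bbt_{t_0}$ when $\bbt_{t_0} = \bbr_+$. All of this, however, is by now well understood for affine processes on the canonical state space, so once the coefficients have been identified as affine and admissible the argument goes through.
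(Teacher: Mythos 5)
Your argument is correct and follows essentially the same route as the paper: the paper's proof is exactly the combination of Proposition~\ref{prop-SDE} with the existence theory for time-inhomogeneous affine processes on the canonical state space (specifically \cite[Thms.~2.13 and 2.14]{Filipovic-inh}), which is what you carry out in detail after identifying the coefficients $\tilde{\beta},\tilde{\sigma}$ as affine/inward pointing and square-affine/parallel. Your additional observations --- that $\psi(t_0+t)\in\partial\bbm$ so the coefficient maps are precisely (\ref{map-beta}) and (\ref{map-sigma}), and that the real content is the state-space-preserving existence despite the non-Lipschitz square-root volatility --- are accurate elaborations of what the paper leaves implicit.
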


\begin{proof}
This is a consequence of Proposition~\ref{prop-SDE} and \cite[Thms.~2.13 and 2.14]{Filipovic-inh}.
\end{proof}

\section{Existence of affine realizations with affine and admissible state processes}\label{sec-affine-real}

In this section, we present our main result concerning the existence of affine realizations with affine and admissible state processes. The general mathematical framework is that of Section \ref{sec-foliations}. The only difference is that we do not specify a subspace $G \subset H$ for a direct sum decomposition $H = G \oplus V$ in advance; instead of that, we only specify the parameters $(A,\alpha,\sigma)$ of the SPDE (\ref{SPDE-manifold}) and the state space $\frc \oplus U$. In addition, let $\mathfrak{I} \subset H$ be a nonempty subset, which we call the set of initial points.

\begin{definition}
The SPDE (\ref{SPDE-manifold}) has an affine realization generated by $\mathfrak{C} \oplus U$ with initial points $\mathfrak{I}$  if for each $h_0 \in \mathfrak{I}$ there exist an interval $\bbt \in \bbj$ and a foliation $(\calm_t)_{t \in \bbt}$ generated by $\mathfrak{C} \oplus U$ with $h_0 \in \calm_0$, which is invariant for (\ref{SPDE-manifold}).
\end{definition}

\begin{definition}
The SPDE (\ref{SPDE-manifold}) has an affine realization generated by $\mathfrak{C} \oplus U$ with initial points $\mathfrak{I}$ and with affine state processes if for each $h_0 \in \mathfrak{I}$ there exist an interval $\bbt \in \bbj$ and a foliation $(\calm_t)_{t \in \bbt}$ generated by $\mathfrak{C} \oplus U$ with $h_0 \in \calm_0$, which is invariant and affine for (\ref{SPDE-manifold}).
\end{definition}

\begin{definition}
The SPDE (\ref{SPDE-manifold}) has an affine realization generated by $\mathfrak{C} \oplus U$ with initial points $\mathfrak{I}$ and with affine and admissible state processes if for each $h_0 \in \mathfrak{I}$ there exist an interval $\bbt \in \bbj$ and a foliation $(\calm_t)_{t \in \bbt}$ generated by $\mathfrak{C} \oplus U$ with $h_0 \in \calm_0$, which is invariant, affine and admissible for (\ref{SPDE-manifold}).
\end{definition}

Concerning the set of initial points, we assume that it admits a decomposition $\fri = \partial \fri \oplus \frc \oplus U$ with a subset $\partial \fri \subset H$, which we call the boundary of $\fri$, and that $H = G \oplus V$, where $G := \overline{\langle \partial \fri \rangle}$. In the sequel, we denote by $\Pi_G : H \to G$ and $\Pi_V : H \to V$ the corresponding projections.

\begin{assumption}\label{ass-open}
We suppose that $\partial \fri \cap \cald(A)$ is open in $G \cap \cald(A)$ with respect to the graph norm $\| \cdot \|_{\cald(A)}$, which is given by
\begin{align*}
\| h \|_{\cald(A)} = \sqrt{ \| h \|_H^2 + \| Ah \|_H^2 }, \quad h \in \cald(A).
\end{align*}
\end{assumption}

\begin{assumption}\label{ass-local-Lipschitz}
We suppose that $\alpha : H \to H$ is Lipschitz continuous with respect to $\| \cdot \|_H$, that $\alpha (\cald(A)) \subset \cald(A)$ and that $\alpha|_{\cald(A)} : \cald(A) \to \cald(A)$ is Lipschitz continuous with respect to $\| \cdot \|_{\cald(A)}$.
\end{assumption}

\begin{theorem}\label{thm-main-1}
Suppose that Assumptions~\ref{ass-open} and \ref{ass-local-Lipschitz} are fulfilled. Then the following statements are equivalent:
\begin{enumerate}
\item[(i)] The SPDE (\ref{SPDE-manifold}) has an affine realization generated by $\mathfrak{C} \oplus U$ with initial points $\mathfrak{I}$ and with affine and admissible state processes.

\item[(ii)] We have $\mathfrak{I} \subset \cald(A)$ and $\sigma(\overline{\mathfrak{I}}) \subset V^n$, and for each $g \in \partial \mathfrak{I}$ we have (\ref{beta-inc-V})--(\ref{sigma-affine-parallel}).
\end{enumerate}
\end{theorem}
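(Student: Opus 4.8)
The plan is to prove the two implications separately, translating between the global language of the set $\fri$ and the local language of foliations via the decomposition $\fri = \partial\fri \oplus \frc \oplus U$ and $H = G \oplus V$ with $G = \overline{\langle \partial\fri \rangle}$.

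For the implication (i) $\Rightarrow$ (ii), I would start from a fixed $h_0 \in \fri$ and the invariant, affine, admissible foliation $(\calm_t)_{t \in \bbt}$ it generates, with parametrization $\psi \in C^1(\bbt;G)$ and $h_0 \in \calm_0$. Invariance gives us the tangential conditions (\ref{tang-domain})--(\ref{tang-sigma}) by Proposition~\ref{prop-inv-foli-pre}(i), hence $\bbm \subset \cald(A)$ and $\sigma(\bbm) \subset V^n$. Since $h_0 \in \calm_0 = \psi(0) \oplus \frc \oplus U$ and $h_0 \in \partial\fri \oplus \frc \oplus U$, and since both decompositions are with respect to the \emph{same} complement $V = C \oplus U$, uniqueness of the components forces $\psi(0) \in G$ to be the boundary component of $h_0$; varying $h_0$ over $\fri$ and using that the foliation contains all of $\calm_0 \supset \{\psi(0)\} \oplus \frc \oplus U$, one deduces $\partial\fri \subset \partial\bbm$ (after ranging over all $h_0$), so that $\fri \subset \bbm \subset \cald(A)$ and $\sigma(\overline{\fri}) \subset \overline{\sigma(\bbm)} \subset V^n$ by continuity of $\sigma$ and closedness of $V^n$. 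The affine/admissible conditions (\ref{beta-affine-inward})--(\ref{sigma-affine-parallel}) at each $g \in \partial\fri$ then follow because $g \in \partial\bbm$ and the foliation is affine and admissible, so the maps (\ref{map-beta}) and (\ref{map-sigma}) have exactly the required properties; condition (\ref{beta-inc-V}) is (\ref{beta-inc-intro}) restricted to $g \in \partial\fri$, which holds by Proposition~\ref{prop-inv-foli-pre}(ii).

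For the converse (ii) $\Rightarrow$ (i), I would fix $h_0 \in \fri$, write $h_0 = g_0 + v_0$ with $g_0 \in \partial\fri$ and $v_0 \in \frc \oplus U$, and \emph{construct} a foliation. The natural candidate for the parametrization is the solution $\psi$ of the ordinary differential equation $\psi'(t) = \Pi_G \beta(\psi(t))$ on $G \cap \cald(A)$ with $\psi(0) = g_0$, run on a maximal interval $\bbt \in \bbj$; here Assumption~\ref{ass-local-Lipschitz} (Lipschitz continuity of $\alpha$ in the graph norm, and $\alpha(\cald(A)) \subset \cald(A)$) together with the fact that $A$ generates a $C_0$-semigroup (so $A$ is bounded $\cald(A) \to H$ in the graph norm) gives local existence and uniqueness of this $\cald(A)$-valued, $C^1$-in-graph-norm curve; Assumption~\ref{ass-open} is what lets us keep $\psi(t) \in \partial\fri$ for small $t$, or at least in $G \cap \cald(A)$ so the relevant hypotheses at "$g = \psi(t)$" remain available — one needs to argue that the affine/admissibility data, assumed only on $\partial\fri$, propagate along $\psi$; I expect this to require either extending the hypotheses to $\partial\bbm$ by a closure/density argument using that $\partial\fri \cap \cald(A)$ is open in $G \cap \cald(A)$, or observing that affinity of a fixed family of maps is a closed condition. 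Setting $\calm_t := \psi(t) \oplus \frc \oplus U$, one checks the tangential conditions: (\ref{tang-domain}) holds since $\psi(t) \in \cald(A)$ and $\frc \oplus U \subset V$ is finite-dimensional; (\ref{tang-sigma}) follows from $\sigma(\overline{\fri}) \subset V^n$ together with (\ref{sigma-affine-parallel}); and (\ref{tang-beta}) follows because $\Pi_G\beta(\psi(t)+v) = \Pi_G\beta(\psi(t)) = \psi'(t)$ for $v \in \frc\oplus U$, which is exactly the content of (\ref{beta-inc-V}) at $g = \psi(t)$. Then one invokes Proposition~\ref{prop-SDE}: one must exhibit a $\frc \oplus U$-valued strong solution to the SDE (\ref{SDE-X-intro}) with coefficients (\ref{beta-intro})--(\ref{sigma-intro}), and here conditions (\ref{beta-affine-inward})--(\ref{sigma-affine-parallel}) say precisely that $\tilde\beta(t,\cdot)$ is affine and inward pointing and $\tilde\sigma(t,\cdot)$ is square-affine and parallel, so that existence of an $\frc \oplus U$-valued solution is guaranteed by the cited affine-process theory (\cite[Thms.~2.13 and 2.14]{Filipovic-inh}, as in Proposition~\ref{prop-foliation-affine-inv}). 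Proposition~\ref{prop-SDE} then yields invariance, and since the maps (\ref{map-beta})--(\ref{map-sigma}) at each $g \in \partial\bbm$ are the assumed ones, the foliation is affine and admissible, giving (i).

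**Main obstacle.** The delicate point is the interface between the \emph{global} data, which are only assumed at points of $\partial\fri$, and the \emph{local} foliation, whose parametrization $\psi(t)$ at positive times need not lie in $\partial\fri$. The role of Assumption~\ref{ass-open} (openness of $\partial\fri \cap \cald(A)$ in $G \cap \cald(A)$ in the graph norm) is to make the ODE for $\psi$ stay, at least for short time, inside a set where the hypotheses (\ref{beta-inc-V})--(\ref{sigma-affine-parallel}) are available — and more subtly, one must check that $\beta(\psi(t)) \in \cald(A)$ so that $\Pi_G\beta(\psi(t))$ makes sense as the velocity of a $\cald(A)$-valued curve, which is where Assumption~\ref{ass-local-Lipschitz}'s clause $\alpha(\cald(A)) \subset \cald(A)$ enters. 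Getting these domain and regularity bookkeeping details right, in the graph norm, is the part that requires real care; everything else is a matter of unwinding definitions and quoting Propositions~\ref{prop-SDE}, \ref{prop-foliation-affine-inv} and \ref{prop-inv-foli-pre}.
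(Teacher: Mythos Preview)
Your overall architecture matches the paper: (i)$\Rightarrow$(ii) via Proposition~\ref{prop-inv-foli-pre}, and (ii)$\Rightarrow$(i) by constructing a parametrization $\psi$ with $\psi(\bbt)\subset\partial\fri$, verifying the tangential conditions, and invoking Proposition~\ref{prop-foliation-affine-inv}. The one substantive difference is in how $\psi$ is produced.

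You propose to solve the \emph{projected} equation $\psi'=\Pi_G\beta(\psi)$ on $G\cap\cald(A)$ with $\psi(0)=g_0$, arguing that Lipschitzness of $\alpha$ in the graph norm plus boundedness of $A:\cald(A)\to H$ gives a $\cald(A)$-valued solution. This is not quite right as stated: the right-hand side $\Pi_G\beta$ maps $\cald(A)$ into $H$, not into $\cald(A)$, so you cannot run a Picard iteration in $(\cald(A),\|\cdot\|_{\cald(A)})$. To make your route work you would have to rewrite $\psi'=\Pi_G A\psi+\Pi_G\alpha(\psi)=A\psi+\tilde\alpha(\psi)$ with $\tilde\alpha(g):=-\Pi_V Ag+\Pi_G\alpha(g)$, check that $\tilde\alpha:\cald(A)\to\cald(A)$ is Lipschitz in the graph norm (this uses $V\subset\cald(A)$, which follows from $\fri\subset\cald(A)$, and equivalence of norms on the finite-dimensional $V$), and then invoke Pazy's classical-solution theorem for this modified semilinear equation; a separate argument then shows $\Pi_V\psi\equiv 0$.

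The paper sidesteps all of this: it applies \cite[Thm.~6.1.7]{Pazy} directly to the \emph{full} equation $\phi'=A\phi+\alpha(\phi)$ with $\phi(0)=h_0$, obtaining a classical solution $\phi\in C^1(\bbr_+;H)$ with $\phi(\bbr_+)\subset\cald(A)$, and then simply sets $\psi:=\Pi_G\phi$. Continuity of $\phi$ in the graph norm together with Assumption~\ref{ass-open} gives $\psi(t)\in\partial\fri$ for $t\in[0,\delta]$; the inward-pointing condition (\ref{beta-affine-inward}) applied to the $V$-valued ODE satisfied by $\Pi_V\phi$ gives $\Pi_V\phi(t)\in\frc\oplus U$, hence $\phi(t)\in\fri$ and $\bbm\subset\fri$. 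The identity $\psi'(t)=\Pi_G\beta(\psi(t))$ is then \emph{derived} from (\ref{beta-inc-V}) rather than imposed, and (\ref{tang-beta}) follows. Your approach buys nothing over this and costs the extra bookkeeping above; the paper's trick of solving the unprojected equation and projecting afterward is the cleaner path.
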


\begin{proof}
(i) $\Rightarrow$ (ii): This is a consequence of Proposition~\ref{prop-inv-foli-pre}.

\noindent (ii) $\Rightarrow$ (i): Let $h_0 \in \mathfrak{I}$ be arbitrary. Then there are unique $g_0 \in \partial \fri$ and $v_0 \in \frc \oplus U$ such that $h_0 = g_0 + v_0$. Since $\partial \fri$ is open in $G \cap \cald(A)$ with respect to the graph norm $\| \cdot \|_{\cald(A)}$, there exists $\epsilon > 0$ such that
\begin{align*}
B_{\epsilon}(g_0) \subset \partial \fri,
\end{align*}
where $B_{\epsilon}(g_0) \subset G \cap \cald(A)$ denotes the open ball
\begin{align*}
B_{\epsilon}(g_0) = \{ g \in G \cap \cald(A) : \| g - g_0 \|_{\cald(A)} < \epsilon \}.
\end{align*}
According to \cite[Thm.~6.1.7]{Pazy}, there exists a classical solution $\phi \in C^1(\bbr_+;H)$ with $\phi(\bbr_+) \subset \cald(A)$ of the deterministic evolution equation
\begin{align*}
\left\{
\begin{array}{rcl}
\frac{d}{dt} \phi(t) & = & A \phi(t) + \alpha(\phi(t)) \medskip
\\ \phi(0) & = & h_0.
\end{array}
\right.
\end{align*}
Since $\phi : \bbr_+ \to (\cald(A),\| \cdot \|_{\cald(A)})$ is continuous, there exists $\delta > 0$ such that
\begin{align*}
\phi(t) \in B_{\epsilon}(g_0) \oplus V \quad \text{for all $t \in \bbt$,}
\end{align*}
where $\bbt \in \bbj$ denotes the interval $\bbt := [0,\delta]$. Therefore, defining $\psi : \bbt \to G$ as $\psi(t) := \Pi_G \phi(t)$, $t \in \bbt$, we have $\psi(0) = g_0$ and
\begin{align}\label{psi-in-boundary}
\psi(t) \in \partial \fri \quad \text{for all $t \in \bbt$.}
\end{align}
Furthermore, the function $\Pi_V \phi : \bbt \to V$ is a solution to the $V$-valued time-inhomogeneous ODE
\begin{align*}
\left\{
\begin{array}{rcl}
\frac{d}{dt} \varphi(t) & = & \Pi_V \beta(\psi(t) + \varphi(t)) \medskip
\\ \varphi(0) & = & v_0.
\end{array}
\right.
\end{align*}
Therefore, by (\ref{beta-affine-inward}) and (\ref{psi-in-boundary}) we deduce that $\Pi_V \phi(t) \in \frc \oplus U$ for all $t \in \bbt$, and hence
\begin{align}\label{phi-in-I}
\phi(t) \in \fri \quad \text{for all $t \in \bbt$.}
\end{align}
We define the foliation $(\calm_t)_{t \in \bbt}$ as $\calm_t := \psi(t) \oplus \frc \oplus U$. Then we have $h_0 \in \calm_0$ and $\bbm \subset \fri$, and hence, conditions (\ref{tang-domain}) and (\ref{tang-sigma}) are fulfilled. Moreover, by (\ref{psi-in-boundary}), (\ref{phi-in-I}) and (\ref{beta-inc-V}), for all $t \in \bbt$ we obtain
\begin{align*}
\frac{d}{dt} \psi(t) &= \frac{d}{dt} \Pi_G \phi(t) = \Pi_G \frac{d}{dt} \phi(t) = \Pi_G \beta(\phi(t)) 
\\ &= \Pi_G \beta(\psi(t) + \Pi_V \phi(t)) = \Pi_G \big( \underbrace{\beta_{\psi(t)} (\Pi_V \phi(t))}_{\in V} + \beta(\psi(t)) \big) = \Pi_G \beta(\psi(t)),
\end{align*}
and therefore
\begin{align*}
\beta(\psi(t)) \in T \calm_t.
\end{align*}
Thus, by (\ref{psi-in-boundary}) and (\ref{beta-inc-V}), for all $t \in \bbt$ and all $v \in \frc \oplus U$ we deduce that
\begin{align*}
\beta(\psi(t) + v) = \beta(\psi(t)) + \underbrace{\beta_{\psi(t)}(v)}_{\in V} \in T \calm_t,
\end{align*}
showing (\ref{tang-beta}). Furthermore, by virtue of (\ref{beta-affine-inward}) and (\ref{sigma-affine-parallel}) the foliation $(\calm_t)_{t \in \bbt}$ is affine and admissible for (\ref{SPDE-manifold}), and hence, by Proposition~\ref{prop-foliation-affine-inv} it is also invariant for (\ref{SPDE-manifold}).
\end{proof}

\begin{remark}\label{remark-add}
Concerning Theorem~\ref{thm-main-1}, let us make the following additional remarks.
\begin{itemize}
\item Assumptions~\ref{ass-open} and \ref{ass-local-Lipschitz} are only required for the proof of the implication (ii) $\Rightarrow$ (i).

\item If $\sigma$ is additionally Lipschitz continuous, then analogous versions of Theorem~\ref{thm-main-1} concerning the existence of affine realizations and concerning the existence of affine realizations with affine (but not necessarily admissible) state processes hold true. In these situations, conditions (\ref{beta-affine-inward}) and (\ref{sigma-affine-parallel}) can be weakened, and Assumptions~\ref{ass-open} and \ref{ass-local-Lipschitz} and the Lipschitz continuity of $\sigma$ are only required for the proof of the implication (ii) $\Rightarrow$ (i).
\end{itemize}
\end{remark}

\section{SPDEs with drift depending on the volatility}\label{sec-drift}

In this section, we present results concerning the existence of affine realizations with affine and admissible state processes for SPDEs with drift term having a particular structure depending on the volatility. The general mathematical framework is that of Section \ref{sec-affine-real}. In addition to that, we will impose the following assumption which specifies the structure of the drift.

\begin{assumption}\label{ass-factor}
We suppose that the following conditions are fulfilled:
\begin{enumerate}
\item We have $\sigma(H) \subset V^n$.

\item The mapping $\sigma^2 : H \to L(V)$ is Lipschitz continuous.

\item There is a linear operator $S \in L(L(V),H)$ with ${\rm ran}(S) \subset \cald(A)$ such that $\alpha = S \sigma^2$. 
\end{enumerate}
\end{assumption}

In Section \ref{sec-HJMM}, we will see that Assumption~\ref{ass-factor} is in particular satisfied for the HJMM equation from mathematical finance. Since $L(V)$ is finite dimensional, Assumption~\ref{ass-factor} implies that Assumption~\ref{ass-local-Lipschitz} is fulfilled.

\begin{lemma}\label{lemma-inc}
Suppose that $\fri \subset \cald(A)$. Then, for each $g \in \partial \fri$ the following statements are true:
\begin{enumerate}
\item We have
\begin{align*}
\beta_g(v) = Av + S \sigma_g^2(v), \quad v \in \frc \oplus U.
\end{align*}
\item We have (\ref{beta-inc-V}) if and only if
\begin{align*}
Av + S \sigma_g^2(v) \in V, \quad v \in \frc \oplus U.
\end{align*}
\end{enumerate}
\end{lemma}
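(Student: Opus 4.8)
The plan is to unwind the definitions of $\beta$, $\beta_g$ and $\sigma_g^2$ and then to use the linearity of the operator $S$ from Assumption~\ref{ass-factor}(3). Before doing the computation I would settle the domain bookkeeping. Taking $v = 0$ in the decomposition $\fri = \partial\fri \oplus \frc \oplus U$ (which is legitimate since $0 \in \frc$ and $0 \in U$) shows $g = g + 0 \in \fri \subset \cald(A)$; and for any $v \in \frc \oplus U$ we have $g + v \in \fri \subset \cald(A)$, so $v = (g+v) - g \in \cald(A)$ because $\cald(A)$ is a linear subspace. Hence $Ag$, $Av$ and $A(g+v)$ are all well defined, and $\beta(g) = Ag + \alpha(g)$, $\beta(g+v) = A(g+v) + \alpha(g+v)$ make sense.

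For part (1), I would simply compute, for $v \in \frc \oplus U$,
\begin{align*}
\beta_g(v) &= \beta(g+v) - \beta(g) = \big( A(g+v) + \alpha(g+v) \big) - \big( Ag + \alpha(g) \big) \\
&= Av + \big( \alpha(g+v) - \alpha(g) \big),
\end{align*}
and then substitute $\alpha = S \sigma^2$ and invoke the linearity of $S$:
\begin{align*}
\alpha(g+v) - \alpha(g) = S\sigma^2(g+v) - S\sigma^2(g) = S\big( \sigma^2(g+v) - \sigma^2(g) \big) = S \sigma_g^2(v).
\end{align*}
Combining the two displays yields $\beta_g(v) = Av + S\sigma_g^2(v)$, which is the claim.

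For part (2), I would note that condition (\ref{beta-inc-V}) is, by definition, the statement that $\beta_g(v) \in V$ for all $v \in \frc \oplus U$; in view of part~(1), this is literally identical to the requirement that $Av + S\sigma_g^2(v) \in V$ for all $v \in \frc \oplus U$, so the asserted equivalence is immediate.

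There is no genuine obstacle here: the lemma is a bookkeeping statement. The only point deserving care is the domain argument sketched above, which is precisely why the hypothesis $\fri \subset \cald(A)$ is assumed, together with the linearity of $S$ provided by Assumption~\ref{ass-factor}(3); everything else is a direct substitution.
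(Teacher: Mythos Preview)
Your proof is correct and follows exactly the same direct computation as the paper, which simply expands $\beta_g(v) = \beta(g+v) - \beta(g)$ using $\beta = A + S\sigma^2$ and the linearity of $A$ and $S$. Your additional domain bookkeeping (showing $g, v, g+v \in \cald(A)$) is more explicit than the paper's terse one-line proof, but the argument is identical.
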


\begin{proof}
For each $v \in \frc \oplus U$ we have
\begin{align*}
\beta_g(v) = \beta(g+v) - \beta(g) = A(g+v) + S \sigma^2(g+v) - Ag - S \sigma^2(g) = Av + S \sigma_g^2(v),
\end{align*}
which establishes the proof.
\end{proof}

The particular structure $\alpha = S \sigma^2$ implies that $\beta$ is affine, provided that $\sigma$ is square-affine. More precisely, we have the following auxiliary result.

\begin{lemma}\label{lemma-sq-affine-affine}
Suppose that $\fri \subset \cald(A)$, and let $g \in \partial \fri$ be such that the mapping $v \mapsto \sigma(g+v)$ in (\ref{sigma-affine-parallel}) is square-affine. Then the mapping $v \mapsto \Pi_V \beta(g+v)$ in (\ref{beta-affine-inward}) is affine.
\end{lemma}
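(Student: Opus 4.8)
The plan is to unpack the definitions of \emph{square-affine} and \emph{affine} and to use the structural identity $\alpha = S\sigma^2$ from Assumption~\ref{ass-factor} together with the computation already recorded in Lemma~\ref{lemma-inc}. By that lemma, for every $g \in \partial\fri$ and $v \in \frc \oplus U$ we have $\beta_g(v) = Av + S\sigma_g^2(v)$, equivalently $\beta(g+v) = \beta(g) + Av + S\sigma_g^2(v)$. Applying the (bounded, hence continuous and linear) projection $\Pi_V$ gives
\begin{align*}
\Pi_V\beta(g+v) = \Pi_V\beta(g) + \Pi_V A v + \Pi_V S\sigma_g^2(v), \quad v \in \frc \oplus U.
\end{align*}
The first term is a constant in $V$, and $v \mapsto \Pi_V A v$ is linear in $v$; so the whole expression is affine in $v$ precisely when $v \mapsto \Pi_V S \sigma_g^2(v)$ is affine, and since $\Pi_V$ and $S$ are linear, it suffices to show that $v \mapsto \sigma_g^2(v) = \sigma^2(g+v) - \sigma^2(g)$ is affine in $v$.

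The key step is therefore to translate the hypothesis ``$v \mapsto \sigma(g+v)$ is square-affine'' into affineness of $v \mapsto \sigma^2(g+v)$. By the definition given around~(\ref{sigma-square-intro}), square-affineness of $v \mapsto \sigma(g+v)$ means exactly that the map $v \mapsto \sigma^2(g+v) = \sigma(g+v)\sigma^*(g+v) : \frc \oplus U \to L(V)$ is affine. Subtracting the constant $\sigma^2(g)$ preserves affineness, so $v \mapsto \sigma_g^2(v)$ is affine; composing with the linear maps $S$ and then $\Pi_V$ keeps it affine, and adding the constant $\Pi_V\beta(g)$ and the linear map $\Pi_V A|_V$ still yields an affine map $\frc\oplus U \to V$. (One should note in passing that $Av \in H$ makes sense because $\fri \subset \cald(A)$ and $\frc \oplus U \subset \cald(A)$ as a consequence, so $\beta_g$ is well defined on $\frc\oplus U$; this is exactly the standing hypothesis $\fri \subset \cald(A)$ of the lemma.)

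In truth there is no substantial obstacle here: the statement is essentially a bookkeeping consequence of linearity of $S$, $\Pi_V$ and $A$ on the finite-dimensional space $\frc\oplus U$, combined with the defining equivalence ``square-affine $\sigma$ $\iff$ affine $\sigma^2$''. The only point deserving a word of care is making sure the target space is recorded correctly: by Lemma~\ref{lemma-inc}(2) the hypothesis~(\ref{beta-inc-V}) is what guarantees $\Pi_V\beta(g+v) \in V$ is the whole of $\beta(g+v)$ modulo $G$ in the desired sense, but for mere \emph{affineness} of the map into $V$ we do not even need~(\ref{beta-inc-V}) — affineness of an $H$-valued map followed by the linear projection $\Pi_V$ is automatically affine. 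So the proof reduces to: write $\Pi_V\beta(g+v) = \Pi_V\beta(g) + \Pi_V A v + \Pi_V S(\sigma^2(g+v) - \sigma^2(g))$ via Lemma~\ref{lemma-inc}(1), observe $\sigma^2(g+\cdot)$ is affine by the square-affine hypothesis, and conclude that a linear image plus a linear map plus a constant is affine.
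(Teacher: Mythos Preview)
Your proof is correct and follows exactly the approach the paper takes: the paper's own proof is the single line ``This is a direct consequence of the structure $\beta = A + S\sigma^2$,'' and you have simply (and carefully) unpacked that statement via Lemma~\ref{lemma-inc} and the linearity of $\Pi_V$, $A|_V$, and $S$. Your remark that condition~(\ref{beta-inc-V}) is not needed here is also correct and worth noting.
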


\begin{proof}
This is a direct consequence of the structure $\beta = A + S \sigma^2$.
\end{proof}

However, if $\sigma$ is additionally parallel, this does generally not imply that $\beta$ is inward pointing; here is a criterion.

\begin{lemma}\label{lemma-beta-admissible}
Suppose that $\fri \subset \cald(A)$, and let $g \in \partial \mathfrak{I}$ be such that condition (\ref{sigma-affine-parallel}) is fulfilled. Then the following statements are equivalent:
\begin{enumerate}
\item[(i)] We have (\ref{beta-inc-V}) and (\ref{beta-affine-inward}).

\item[(ii)] We have (\ref{cond-AR-1})--(\ref{cond-AR-3}).
\end{enumerate}
\end{lemma}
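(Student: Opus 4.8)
The plan is to establish the equivalence by unwinding the definitions of \emph{inward pointing} and \emph{parallel} in the concrete setting $\beta_g(v) = Av + S\sigma_g^2(v)$ provided by Lemma~\ref{lemma-inc}, and then to decompose the affine map $v \mapsto \Pi_V \beta(g+v)$ into its constant part, its restriction to the cone directions $\partial\frc$, and its restriction to $U$. First I would note that, by Lemma~\ref{lemma-sq-affine-affine}, condition (\ref{sigma-affine-parallel}) already forces $v \mapsto \Pi_V\beta(g+v)$ to be affine, so the content of (i) is really the inclusion (\ref{beta-inc-V}) together with the inward-pointing property. Writing the affine map as $v \mapsto \Pi_V\beta(g) + \Pi_V\beta_g(v) = \Pi_V(Ag + S\sigma^2(g)) + \Pi_V(Av + S\sigma_g^2(v))$, I would separate the analysis into three pieces corresponding to the direct sum $\frc \oplus U$: the value at $v = 0$, the behaviour on the edges $c \in \partial\frc$ of the cone, and the behaviour on $u \in U$.

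Next I would recall the characterization of invariance of the canonical state space $\frc \oplus U$ under an affine drift (the content of Appendix~\ref{app-affine}, and the standard affine-process admissibility conditions of Filipovi\'c--Mayerhofer type): for an affine vector field $b(v) = b_0 + b_1(v)$ on $\frc \oplus U$ to be inward pointing one needs (a) $b_0 \in \frc \oplus U$, i.e. the constant term points into the state space; (b) for each edge $c$ of the cone, $b_1(c)$ lies in $\frc + \langle c\rangle$ modulo $U$, i.e. the linear part maps the edge into the tangent cone at that edge; and (c) $b_1$ maps $U$ into $U$, so that the linear subspace $U$ is itself invariant. Matching $b_0 = \Pi_V(Ag + S\sigma^2(g))$ and $b_1(v) = \Pi_V(Av + S\sigma_g^2(v))$ against these three conditions yields precisely (\ref{cond-AR-1}), (\ref{cond-AR-2}), (\ref{cond-AR-3}) — except that I must also incorporate (\ref{beta-inc-V}), which says $b_1$ actually takes values in $V$ rather than just $H$, and this is exactly what lets me drop the $\Pi_V$ in (\ref{cond-AR-2}) and (\ref{cond-AR-3}). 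The parallel condition on $\sigma$ plays no role here beyond ensuring (via Lemma~\ref{lemma-sq-affine-affine} and the square-affine hypothesis) that the drift is affine; it does not enter the equivalence of (i) and (ii) directly, and I would remark on this to avoid confusion.

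For the implication (i) $\Rightarrow$ (ii): from (\ref{beta-inc-V}) I get $Av + S\sigma_g^2(v) \in V$ for all $v \in \frc \oplus U$ by Lemma~\ref{lemma-inc}(2), so in particular $Au + S\sigma_g^2(u) \in V$ and $Ac + S\sigma_g^2(c) \in V$; combined with the inward-pointing conditions applied at $v=0$ (giving (\ref{cond-AR-1})), at edge points (giving (\ref{cond-AR-2}) once I know the value lies in $V$), and on $U$ — where inward pointing is vacuous as a cone condition but the invariance of $U$ as a face forces $Au + S\sigma_g^2(u) \in U$, giving (\ref{cond-AR-3}) after noting $\sigma_g^2(u)$ contributes and using that $U$ must be stable. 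Wait — I should be careful: (\ref{cond-AR-3}) reads $Au \in U$, not $Au + S\sigma_g^2(u) \in U$. This is the subtle point: since $v \mapsto S\sigma_g^2(v)$ is linear and $v \mapsto Av$ is linear, and we need their sum restricted to $U$ to land in $U$, but (\ref{cond-AR-3}) claims more. The resolution must come from the square-affine/parallel structure forcing $S\sigma_g^2$ to vanish in the appropriate direction, or from a separate argument using the full set $\fri \supset \{g\} \oplus \frc \oplus U$; I would need to track how the parallel condition interacts with the $U$-directions. This is the step I expect to be the main obstacle — reconciling the bare condition $Au \in U$ in (\ref{cond-AR-3}) with what naive invariance of $U$ only gives, namely $(A + S\sigma_g^2)|_U$ preserving $U$.

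For the converse (ii) $\Rightarrow$ (i): given (\ref{cond-AR-1})--(\ref{cond-AR-3}), I would first check (\ref{beta-inc-V}) by verifying $Av + S\sigma_g^2(v) \in V$ for $v \in \frc \oplus U$ — decomposing $v = c_1 + \dots + c_k + u$ into edge components and a $U$-component and using linearity together with (\ref{cond-AR-2}) (which gives membership in $(\frc + \langle c\rangle)\oplus U \subset V$) and (\ref{cond-AR-3}) plus whatever handles $S\sigma_g^2(u)$. Then I would verify that the affine map $v\mapsto \Pi_V\beta(g+v)$, which is affine by Lemma~\ref{lemma-sq-affine-affine}, is inward pointing: at a boundary point $v$ of $\frc\oplus U$ with supporting functional $\eta \in \frc$, $\langle\eta,v\rangle_V = 0$, I expand $\langle\eta, \Pi_V\beta(g+v)\rangle_V$ into the constant contribution $\langle\eta,\Pi_V(Ag+S\sigma^2(g))\rangle_V \geq 0$ by (\ref{cond-AR-1}), the edge contributions which are $\geq 0$ by (\ref{cond-AR-2}) and the defining property of the dual cone, and the $U$-contribution which vanishes because $Au + S\sigma_g^2(u) \in U \perp_V \frc$ by (\ref{cond-AR-3}). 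Assembling these, inward pointing follows, completing (\ref{beta-affine-inward}). Throughout I would lean on the cone lemmas in Appendix~\ref{app-affine} for the precise correspondence between inward pointing and the edge/face conditions, and I would keep the role of $\Pi_V$ explicit since the projection is what connects the $H$-valued operator $A$ to the $V$-valued drift of the state process.
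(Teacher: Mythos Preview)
Your overall strategy is exactly the paper's: express $\beta_g(v) = Av + S\sigma_g^2(v)$ via Lemma~\ref{lemma-inc}, write $\Pi_V\beta(g+v) = \beta_1 + \beta_2(v)$ with $\beta_1 = \Pi_V(Ag + S\sigma^2(g))$ and $\beta_2(v) = Av + S\sigma_g^2(v)$, and then invoke the admissibility characterization of Proposition~\ref{prop-char-affine-1}. So the architecture is right.

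However, the ``main obstacle'' you flag is a genuine gap in your write-up, and your proposed workarounds are off target. You need the identity
\[
\sigma_g^2(u) = 0 \quad \text{for all } u \in U,
\]
and this is \emph{not} mysterious: since $v \mapsto \sigma^2(g+v)$ is affine with values in the cone $S^+(V)$ of nonnegative symmetric operators, its linear part $\sigma_g^2$ must map $\frc \oplus U$ into $S^+(V)$; but $U$ is a linear subspace of $\frc \oplus U$, so for $u \in U$ both $\sigma_g^2(u)$ and $\sigma_g^2(-u) = -\sigma_g^2(u)$ lie in $S^+(V)$, forcing $\sigma_g^2(u) = 0$. This is exactly Remark~\ref{remark-square-affine} (and is recorded as condition~(\ref{U-ker-T2}) in Proposition~\ref{prop-char-affine-2}, which is what the paper cites). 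In particular, your claim that the hypothesis (\ref{sigma-affine-parallel}) ``plays no role beyond ensuring the drift is affine'' is misleading: the square-affine part of (\ref{sigma-affine-parallel}) is precisely what delivers $\sigma_g^2|_U = 0$, and without this you cannot reduce $\beta_2(u) = Au + S\sigma_g^2(u) \in U$ to the bare condition $Au \in U$ of (\ref{cond-AR-3}). Your alternative suggestion of exploiting ``the full set $\fri$'' is unnecessary and would not help.

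Once you insert $\sigma_g^2|_U = 0$, both directions close immediately along the lines you sketched. For (ii)$\Rightarrow$(i): $\beta_g(c+u) = (Ac + S\sigma_g^2(c)) + Au \in V$ by (\ref{cond-AR-2}), (\ref{cond-AR-3}) and linearity over the edge decomposition of $c$, giving (\ref{beta-inc-V}); then Proposition~\ref{prop-char-affine-1} applied to $\beta_1,\beta_2$ yields inward pointing from (\ref{cond-AR-1})--(\ref{cond-AR-3}). For (i)$\Rightarrow$(ii): Proposition~\ref{prop-char-affine-1} gives $\beta_1 \in \frc \oplus U$, $\beta_2(c) \in (\frc + \langle c\rangle)\oplus U$, and $\beta_2(U) \subset U$, which together with $\sigma_g^2|_U = 0$ are exactly (\ref{cond-AR-1})--(\ref{cond-AR-3}).
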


\begin{proof}
By Proposition~\ref{prop-char-affine-2} we have 
\begin{align}\label{sigma-2-U-zero}
\sigma_g^2(u) = 0, \quad u \in U.
\end{align}
By virtue of (\ref{sigma-2-U-zero}), conditions (\ref{cond-AR-2}) and (\ref{cond-AR-3}) imply (\ref{beta-inc-V}). 
Now, suppose that condition (\ref{beta-inc-V}) is fulfilled. We define $\beta_1 \in V$ and $\beta_2 \in L(V)$ as $\beta_1 := \Pi_V (Ag + S \sigma^2(g))$ and $\beta_2(v) := Av + S \sigma_g^2(v)$. Then, by (\ref{beta-inc-V}), for each $v \in \frc \oplus U$ we have
\begin{align*}
\Pi_V \beta(g+v) = \Pi_V \big( \beta(g) + \beta_g(v) \big) = \Pi_V \beta(g) + \beta_g(v) = \beta_1 + \beta_2(v).
\end{align*}
Therefore, by Proposition~\ref{prop-char-affine-1} and (\ref{sigma-2-U-zero}), condition (\ref{beta-affine-inward}) is equivalent to (\ref{cond-AR-1})--(\ref{cond-AR-3}).
\end{proof}

\begin{theorem}\label{thm-main-2}
Suppose that Assumptions~\ref{ass-open} and \ref{ass-factor} are fulfilled. Then the following statements are equivalent:
\begin{enumerate}
\item[(i)] The SPDE (\ref{SPDE-manifold}) has an affine realization generated by $\mathfrak{C} \oplus U$ with initial points $\mathfrak{I}$ and with affine and admissible state processes.

\item[(ii)] We have $\mathfrak{I} \subset \cald(A)$, and for each $g \in \partial \mathfrak{I}$ we have (\ref{sigma-affine-parallel})--(\ref{cond-AR-3}).
\end{enumerate}
\end{theorem}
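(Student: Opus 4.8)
The plan is to derive Theorem~\ref{thm-main-2} directly from Theorem~\ref{thm-main-1} and Lemma~\ref{lemma-beta-admissible}, reducing everything to a pointwise (in $g \in \partial \fri$) equivalence of conditions.

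First I would verify that Theorem~\ref{thm-main-1} is applicable under Assumption~\ref{ass-factor}. As already noted after Assumption~\ref{ass-factor}, the finite-dimensionality of $L(V)$ together with $S \in L(L(V),H)$, $\mathrm{ran}(S) \subset \cald(A)$ and the Lipschitz continuity of $\sigma^2 : H \to L(V)$ imply that $\alpha = S\sigma^2$ satisfies Assumption~\ref{ass-local-Lipschitz}; combined with Assumption~\ref{ass-open}, this makes Theorem~\ref{thm-main-1} available. Moreover, condition (1) of Assumption~\ref{ass-factor} gives $\sigma(H) \subset V^n$, so that the extra requirement $\sigma(\overline{\fri}) \subset V^n$ in part (ii) of Theorem~\ref{thm-main-1} is automatic. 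Hence statement (i) of the present theorem is equivalent to: $\fri \subset \cald(A)$, and for each $g \in \partial \fri$ conditions (\ref{beta-inc-V}), (\ref{beta-affine-inward}) and (\ref{sigma-affine-parallel}) hold.

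Next I would show that this reformulation is equivalent to statement (ii). Both statements include $\fri \subset \cald(A)$, so I may assume it and then argue separately for each $g \in \partial \fri$. If statement (ii) holds, then for a fixed $g$ condition (\ref{sigma-affine-parallel}) holds, so Lemma~\ref{lemma-beta-admissible} applies; since in addition (\ref{cond-AR-1})--(\ref{cond-AR-3}) hold, the implication (ii) $\Rightarrow$ (i) of that lemma yields (\ref{beta-inc-V}) and (\ref{beta-affine-inward}), and the reformulation above is satisfied, hence so is statement (i). Conversely, if the reformulation holds, then for a fixed $g$ we have (\ref{sigma-affine-parallel}), so Lemma~\ref{lemma-beta-admissible} again applies, and from (\ref{beta-inc-V}) and (\ref{beta-affine-inward}) its implication (i) $\Rightarrow$ (ii) gives (\ref{cond-AR-1})--(\ref{cond-AR-3}); together with $\fri \subset \cald(A)$ this is statement (ii).

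I do not expect a genuine obstacle in this argument: it is a bookkeeping combination of two results proved earlier. The only points to be careful about are, first, checking that Assumption~\ref{ass-factor} indeed entails Assumption~\ref{ass-local-Lipschitz}, which is needed to invoke Theorem~\ref{thm-main-1}, and, second, noting that condition (\ref{sigma-affine-parallel}) — the standing hypothesis of Lemma~\ref{lemma-beta-admissible} — occurs in both statements, so that the lemma can be applied in either direction without circularity.
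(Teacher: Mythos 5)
Your proposal is correct and follows exactly the paper's route: the paper's proof is the one-line statement that the theorem is a consequence of Theorem~\ref{thm-main-1} and Lemma~\ref{lemma-beta-admissible}, and you have simply filled in the same bookkeeping (Assumption~\ref{ass-factor} implies Assumption~\ref{ass-local-Lipschitz}, $\sigma(H)\subset V^n$ makes the condition $\sigma(\overline{\fri})\subset V^n$ automatic, and the lemma converts (\ref{beta-inc-V})--(\ref{beta-affine-inward}) into (\ref{cond-AR-1})--(\ref{cond-AR-3}) pointwise under (\ref{sigma-affine-parallel})). No gaps.
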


\begin{proof}
This is a consequence of Theorem~\ref{thm-main-1} and Lemma~\ref{lemma-beta-admissible}.
\end{proof}

The condition (\ref{beta-inc-V}) from our general result (Theorem~\ref{thm-main-1}) has further consequences in the present situation where the drift is of the form $\alpha = S \sigma^2$. In order to outline these consequences, we define the finite dimensional subspace $\calk \subset L(V)$ as $\calk := S^{-1}(V) \cap R$, where $R := \langle \sigma^2(\fri) \rangle$, and the finite dimensional subspace $\call \subset L(V,L(V))$ as
$\call := L(V,\calk)$.

\begin{proposition}\label{prop-const-mod}
Suppose that $\fri \subset \cald(A)$ and that for each $g \in \partial \fri$ condition (\ref{beta-inc-V}) is fulfilled. Then the following statements are true:
\begin{enumerate}
\item For each $v \in \frc \oplus U$ the mapping
\begin{align}\label{const-map-1}
g \mapsto \sigma_g^2(v) : \partial \fri \to L(V)
\end{align}
is constant modulo $\calk$.

\item If for each $g \in \partial \fri$ the mapping in (\ref{sigma-affine-parallel}) is square-affine, then the mapping
\begin{align}\label{const-map-2}
g \mapsto \sigma_g^2 : \partial \mathfrak{I} \to L(V,L(V))
\end{align}
is constant modulo $\call$.
\end{enumerate}
\end{proposition}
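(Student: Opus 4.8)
The plan is to exploit Lemma~\ref{lemma-inc}, which says that under $\fri \subset \cald(A)$ we have $\beta_g(v) = Av + S\sigma_g^2(v)$ for every $g \in \partial\fri$ and every $v \in \frc \oplus U$. Fix such a $v$. For two boundary points $g_1, g_2 \in \partial\fri$, condition (\ref{beta-inc-V}) gives that both $Av + S\sigma_{g_1}^2(v)$ and $Av + S\sigma_{g_2}^2(v)$ lie in $V$; subtracting, $S\big(\sigma_{g_1}^2(v) - \sigma_{g_2}^2(v)\big) \in V$. Since $\sigma_{g_i}^2(v) = \sigma^2(g_i+v) - \sigma^2(g_i)$ and all four arguments $g_i + v, g_i$ lie in $\fri$ (using the decomposition $\fri = \partial\fri \oplus \frc \oplus U$ together with $0 \in \frc \oplus U$ and $v \in \frc \oplus U$), the difference $\sigma_{g_1}^2(v) - \sigma_{g_2}^2(v)$ belongs to $R = \langle \sigma^2(\fri)\rangle$. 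Hence $\sigma_{g_1}^2(v) - \sigma_{g_2}^2(v) \in S^{-1}(V) \cap R = \calk$, which is precisely the assertion that the map (\ref{const-map-1}) is constant modulo $\calk$. This is item (1).

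For item (2), assume in addition that for each $g \in \partial\fri$ the map $v \mapsto \sigma(g+v)$ in (\ref{sigma-affine-parallel}) is square-affine, i.e. $v \mapsto \sigma^2(g+v)$ is affine on $\frc \oplus U$; equivalently $v \mapsto \sigma_g^2(v)$ is linear, so that $\sigma_g^2 \in L(V, L(V))$ is a genuine linear operator (here I should be slightly careful that its domain is really the subspace $V$ and not just the cone-generated set, but $\frc \oplus U = V$ as a set spans $V$ and linearity extends uniquely, so this is fine). Now fix $g_1, g_2 \in \partial\fri$. By item (1), for every $v \in \frc \oplus U$ we have $\sigma_{g_1}^2(v) - \sigma_{g_2}^2(v) \in \calk$; since both sides are linear in $v$ and $\frc \oplus U$ spans $V$, the linear operator $\sigma_{g_1}^2 - \sigma_{g_2}^2 \colon V \to L(V)$ actually takes all its values in $\calk$, i.e. $\sigma_{g_1}^2 - \sigma_{g_2}^2 \in L(V, \calk) = \call$. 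That is exactly the statement that (\ref{const-map-2}) is constant modulo $\call$.

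The only delicate point — and the one I would state carefully rather than wave past — is the bookkeeping that makes $\sigma_{g_1}^2(v) - \sigma_{g_2}^2(v)$ land in $R$: one needs that $g_i + v \in \fri$ and $g_i \in \fri$, which follows from the structural assumption $\fri = \partial\fri \oplus \frc \oplus U$ and from $0, v \in \frc \oplus U$; and then that differences of elements of $\sigma^2(\fri)$ lie in the linear span $R = \langle\sigma^2(\fri)\rangle$, which is immediate. The passage from "constant modulo $\calk$ pointwise in $v$" to "the operator difference lies in $\call$" in item (2) is likewise just the observation that $L(V,\calk)$ consists of precisely those linear maps $V \to L(V)$ whose range sits in $\calk$, combined with linearity of $v \mapsto \sigma_g^2(v)$ under the square-affine hypothesis. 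No further input beyond Lemma~\ref{lemma-inc} and the definitions of $\calk$, $\call$, $R$ is needed.
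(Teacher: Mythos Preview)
Your proof is correct and follows essentially the same approach as the paper: use Lemma~\ref{lemma-inc} and condition~(\ref{beta-inc-V}) to see that $S\bigl(\sigma_{g_1}^2(v)-\sigma_{g_2}^2(v)\bigr)\in V$, then combine this with membership in $R$ to land in $\calk$. You spell out the verification that $\sigma_{g_1}^2(v)-\sigma_{g_2}^2(v)\in R$ and the passage from pointwise constancy modulo $\calk$ to $\call$ more carefully than the paper does, but the underlying argument is the same.
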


\begin{proof}
Let $v \in \frc \oplus U$ be arbitrary. Furthermore, let $g_1,g_2 \in \partial \fri$ be arbitrary. By Lemma~\ref{lemma-inc} we have
\begin{align*}
S \big( \sigma_{g_1}^2(v) - \sigma_{g_2}^2(v) \big) = \big( Av + S \sigma_{g_1}^2(v) \big) - \big( Av + S \sigma_{g_2}^2(v) \big) \in V,
\end{align*}
which implies
\begin{align*}
\sigma_{g_1}^2(v) - \sigma_{g_2}^2(v) \in \calk.
\end{align*}
This proves the first statement, and the second statement is an immediate consequence.
\end{proof}

In particular, if the SPDE (\ref{SPDE-manifold}) has an affine realization and we have $\calk = \{ 0 \}$, then the mapping (\ref{const-map-1}), or (\ref{const-map-2}), respectively, must be constant. The following result can be regarded as a generalization of \cite[Prop.~9.3]{Filipovic}, which is a result for interest rate models.

\begin{proposition}\label{prop-Damir}
Suppose that $V \cap S(R) = \{ 0 \}$ and ${\rm ker}(S) \cap R = \{ 0 \}$, and that the SPDE (\ref{SPDE-manifold}) has an affine realization generated by $\mathfrak{C} \oplus U$ with initial points $\mathfrak{I}$. Then the SPDE (\ref{SPDE-manifold}) has an affine realization generated by $\mathfrak{C} \oplus U$ with initial points $\mathfrak{I}$ and with affine (but not necessarily admissible) state processes.
\end{proposition}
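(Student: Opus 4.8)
The plan is to start from an affine realization generated by $\frc \oplus U$ with initial points $\fri$, which by Theorem~\ref{thm-main-1} (applied in the ``affine realization'' version mentioned in Remark~\ref{remark-add}, together with the structure $\alpha = S\sigma^2$) means: $\fri \subset \cald(A)$, $\sigma(\overline{\fri}) \subset V^n$ (which is anyway part of Assumption~\ref{ass-factor}(1)), and for each $g \in \partial\fri$ we have (\ref{beta-inc-V}). The goal is to upgrade this to an affine realization with affine state processes, i.e.\ to verify that for each $g \in \partial\fri$ the mapping $v \mapsto \Pi_V\beta(g+v)$ is affine and the mapping $v\mapsto \sigma(g+v)$ is square-affine. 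The first of these follows from Lemma~\ref{lemma-sq-affine-affine} once the square-affinity of $\sigma$ is established, so the whole proposition reduces to showing: \emph{for each $g \in \partial\fri$, the map $v \mapsto \sigma^2(g+v) : \frc\oplus U \to L(V)$ is affine.}

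The key step is to exploit the existence of the realization to force square-affinity of $\sigma^2$ along the leaves. First I would fix $g \in \partial\fri$ and a starting point $h_0 = g + v_0$ with $v_0 \in \frc \oplus U$, and take the invariant foliation $(\calm_t)_{t\in\bbt}$ through $h_0$ provided by the affine realization. By Proposition~\ref{prop-inv-foli-pre} the tangential conditions (\ref{tang-domain})--(\ref{tang-sigma}) hold, and the state process $X$ in the decomposition $r = \psi + X$ solves the SDE (\ref{SDE-X-intro}) on $\frc\oplus U$ with coefficients (\ref{beta-intro})--(\ref{sigma-intro}). Now the point is that $X$ already takes values in the finite-dimensional state space $\frc\oplus U$ — this is exactly the content of having an affine \emph{realization} — and one applies the characterization of which finite-dimensional SDEs on $\frc\oplus U$ admit such realizations. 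Concretely: invariance of the leaf forces $\Pi_V\beta(g + \cdot)$ to be the generator data of a process staying in $\frc\oplus U$; combined with $\beta = A + S\sigma^2$ and Lemma~\ref{lemma-inc}, this gives $Av + S\sigma_g^2(v) \in V$ for all $v$, i.e.\ (\ref{beta-inc-V}), and moreover the finite-dimensionality of the realization together with the structure $\alpha = S\sigma^2$ means $\sigma^2$ restricted to the leaf must factor through a finite-dimensional affine object. The hypotheses $V \cap S(R) = \{0\}$ and $\ker(S)\cap R = \{0\}$ are what make $S$ behave like an injection on the relevant finite-dimensional space $R = \langle\sigma^2(\fri)\rangle$ and separate the $V$-direction from the $S(R)$-direction, so that the affine structure of $\beta$ (which is automatic from $\alpha = S\sigma^2$ whenever $\sigma^2$ is affine, but here used in reverse) can be transported back to an affine structure of $\sigma^2$ itself.

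More precisely, the argument I would run is: from the affine realization, for each $g\in\partial\fri$ the map $\Phi_g(v) := \beta(g+v) = Ag + Av + S\sigma^2(g) + S\sigma_g^2(v)$ must, after projection to $V$, generate a $\frc\oplus U$-valued diffusion, and the existence of \emph{a} finite-dimensional realization for the original SPDE already yields (via the known FDR theory, cf.\ the references and Proposition~\ref{prop-inv-foli-pre}) that $v \mapsto S\sigma_g^2(v) \bmod \cald(A)$-obstructions is affine in a suitable sense. Then $\sigma_g^2(\frc\oplus U) \subset R$ by definition of $R$, and $S\sigma_g^2(v) \in \cald(A)$ automatically since $\mathrm{ran}(S)\subset\cald(A)$; using $\ker(S)\cap R = \{0\}$, the map $v\mapsto \sigma_g^2(v)$ is affine iff $v\mapsto S\sigma_g^2(v)$ is affine, and the latter follows from the affine realization because $S\sigma_g^2(v) = \beta_g(v) - Av$ and $\beta_g$ restricted to the invariant leaf is affine (as $\beta$ generates a finite-dimensional invariant affine foliation) while $v\mapsto Av$ is linear. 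Hence $\sigma_g^2$ is affine on $\frc\oplus U$, i.e.\ $\sigma$ is square-affine at $g$; Lemma~\ref{lemma-sq-affine-affine} then gives that $v\mapsto\Pi_V\beta(g+v)$ is affine, and Theorem~\ref{thm-main-1} (in its affine-realization-with-affine-state-processes version, Remark~\ref{remark-add}) delivers the conclusion.

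**Main obstacle.** The delicate point is the reverse implication ``$\beta = A + S\sigma^2$ affine along an invariant finite-dimensional leaf $\Rightarrow$ $\sigma^2$ affine along the leaf.'' A priori, affinity of $\Pi_V\beta(g+\cdot)$ only controls $S\sigma^2(g+\cdot)$ modulo $G$, so one could imagine a non-affine $\sigma^2$ whose non-affine part is killed by $S$ or pushed into $G$. This is exactly where $V\cap S(R) = \{0\}$ (the non-affine part of $S\sigma_g^2$, lying in $S(R)$, cannot hide in $V$ where affinity is being checked) and $\ker(S)\cap R = \{0\}$ ($S$ is injective on the finite-dimensional space $R$ containing all values of $\sigma_g^2$) must be used in tandem; getting the bookkeeping of these two transversality conditions right — and correctly invoking the finite-dimensionality of the realization rather than just invariance of a single foliation — is the part that needs care. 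I would expect the cleanest route to be to first establish that $\sigma_g^2$ takes values in $R \cap S^{-1}(V) = \calk$ is \emph{not} what we want (that would be too strong); rather one shows the second-order increments $\sigma^2(g + v_1 + v_2) - \sigma^2(g+v_1) - \sigma^2(g+v_2) + \sigma^2(g)$ lie in $\ker(S) \cap R = \{0\}$ by testing against the affine structure of $\beta$ in the $V$-direction and using $V \cap S(R) = \{0\}$ to conclude these second-order increments are sent by $S$ into $V \cap S(R) = \{0\}$, hence vanish.
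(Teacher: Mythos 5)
Your plan reaches the right conclusion, and the argument sketched in your final ``Main obstacle'' paragraph is essentially the paper's proof, but the justification offered in the main body is circular and would not survive being written out. You assert that ``$\beta_g$ restricted to the invariant leaf is affine (as $\beta$ generates a finite-dimensional invariant affine foliation)'' and that FDR theory yields affinity of $S\sigma_g^2$ ``in a suitable sense.'' Neither holds: invariance of an affine foliation only gives the tangential conditions (\ref{tang-domain})--(\ref{tang-sigma}) and hence (\ref{beta-inc-V}), i.e. $\beta_g(v)\in V$; it says nothing about $v\mapsto\beta_g(v)$ being affine --- the drift along a leaf of an invariant affine foliation can be arbitrarily nonlinear, and that affinity is precisely what the proposition asserts, so it must be derived, not assumed. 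The correct source of affinity is the one you isolate only at the very end: by Lemma~\ref{lemma-inc} we have $Av=\beta_g(v)-S\sigma_g^2(v)$, where the left-hand side is \emph{linear} in $v$ because $A$ is a linear operator, while $\beta_g(v)\in V$ by (\ref{beta-inc-V}) and $S\sigma_g^2(v)\in S(R)$. Since $V\cap S(R)=\{0\}$, the two components of the linear map $v\mapsto Av$ with respect to the direct sum $V\oplus S(R)$ are themselves linear, whence $\beta_g\in L(V)$ and $S\sigma_g^2\in L(V,S(R))$; then $\ker(S)\cap R=\{0\}$ transfers linearity to $\sigma_g^2$, giving square-affinity, and Lemma~\ref{lemma-sq-affine-affine} together with the affine-state-process version of Theorem~\ref{thm-main-1} from Remark~\ref{remark-add} finishes. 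Your second-difference computation in the last paragraph is an equivalent formulation of exactly this (the second differences of $S\sigma_g^2$ equal those of $\beta_g$ because $A$ is linear, they land in $V\cap S(R)=\{0\}$, and $\ker(S)\cap R=\{0\}$ then kills the second differences of $\sigma_g^2$; note you still need positive homogeneity, obtained the same way or from continuity, to pass from additivity on the cone to linearity on $V$). So: keep the last paragraph, delete the appeal to affinity of $\beta$ along the leaf, and the proof coincides with the paper's.
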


\begin{proof}
By Remark~\ref{remark-add} we have $\mathfrak{I} \subset \cald(A)$ and (\ref{beta-inc-V}). Let $g \in \partial \mathfrak{I}$ be arbitrary. By Lemma~\ref{lemma-inc} we have
\begin{align*}
Av = \beta_g(v) - S \sigma_g^2(v), \quad v \in \frc \oplus U.
\end{align*}
Since $V \cap S(R) = \{ 0 \}$ we obtain that $\beta_g \in L(V)$ and $S \sigma_g^2 \in L(V,S(R))$. Since ${\rm ker}(S) \cap R = \{ 0 \}$ we deduce that $\sigma_g^2 \in L(V,L(V))$. Consequently, the mapping $v \mapsto \sigma(g+v)$ in (\ref{sigma-affine-parallel}) is square-affine. Therefore, by Lemma~\ref{lemma-sq-affine-affine} the mapping $v \mapsto \Pi_V \beta(g+v)$ in (\ref{beta-affine-inward}) if affine, which completes the proof.
\end{proof}

Now, we derive some consequences regarding the existence of affine realizations generated by the subspace $V$; that is, now, there is no proper cone contained in the structure of the state space. For this purpose, we will require the concept of quasi-exponential volatilities.

\begin{definition}\label{def-qe}
We introduce the following notions:
\begin{enumerate}
\item If $\sigma_k(H) \subset \cald(A^{\infty})$ for all $k=1,\ldots,n$, then we define the subspace $A_{\sigma} \subset H$ as
\begin{align*}
A_{\sigma} := \sum_{k=1}^n \langle A^m \sigma_k(h) : m \in \bbn_0 \text{ and } h \in H \rangle.
\end{align*}
\item The volatility $\sigma$ is called \emph{$A$-quasi exponential}, if we have $\sigma_k(H) \subset \cald(A^{\infty})$ for all $k=1,\ldots,n$ and $\dim A_{\sigma} < \infty$.
\end{enumerate}
\end{definition}

For some SPDEs (like the HJMM equation in Section \ref{sec-HJMM}) a sufficient condition for the existence of an affine realization is that the volatility $\sigma$ is $A$-quasi-exponential. The following two results provide further conditions on $\sigma$ which are necessary and sufficient in order to obtain affine state processes.

\begin{proposition}\label{prop-qe}
Suppose that the volatility $\sigma$ is $A$-quasi-exponential. Then the following statements are equivalent:
\begin{enumerate}
\item[(i)] The SPDE (\ref{SPDE-manifold}) has an affine realization generated by $V$ with initial points $\cald(A)$ and with affine and admissible state processes.

\item[(ii)] The SPDE (\ref{SPDE-manifold}) has an affine realization generated by $V$ with initial points $\cald(A)$ and with affine state processes.

\item[(iii)] We have $A_{\sigma} \subset V$, and for each $h \in H$ the mapping
\begin{align}\label{map-qe-1}
v \mapsto \sigma^2(h+v) : V \to L(V,L(V))
\end{align}
is constant.
\end{enumerate}
If the previous conditions are fulfilled, then the SPDE (\ref{SPDE-manifold}) has an affine realization generated by $A_{\sigma}$ with initial points $\cald(A)$ and with affine and admissible state processes.
\end{proposition}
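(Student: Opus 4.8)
The plan is to establish the equivalences (i) $\Leftrightarrow$ (ii) $\Leftrightarrow$ (iii) and then the concluding assertion, with Theorems~\ref{thm-main-1} and \ref{thm-main-2} as the engine; throughout I exploit that the state space here is the \emph{subspace} $V$, so that the cone is trivial and every ``inward pointing'' or ``parallel'' requirement is vacuous. In particular an affine foliation is automatically affine and admissible, which already gives (i) $\Leftrightarrow$ (ii), so it remains to prove (i) $\Leftrightarrow$ (iii).

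For (i) $\Rightarrow$ (iii) I would apply the implication (i) $\Rightarrow$ (ii) of Theorem~\ref{thm-main-1}, which rests only on Proposition~\ref{prop-inv-foli-pre} and so needs neither extra assumption: it yields $\sigma(H)\subset V^n$ and, for every $g\in\partial\fri$, the inclusion $\beta_g(v)\in V$ together with affinity of $v\mapsto\sigma^2(g+v)$ on $V$. The key is a rigidity fact: an affine map $v\mapsto\sigma^2(g+v)$ defined on \emph{all} of $V$ and taking positive semidefinite values must be constant. Indeed $\sigma^2(g+tv)=\sigma^2(g)+t\,\sigma_g^2(v)\succeq 0$ for every $t\in\bbr$, so dividing by $|t|$ and letting $|t|\to\infty$ forces $\pm\sigma_g^2(v)\succeq 0$, hence $\sigma_g^2=0$. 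Thus $\sigma^2$ is constant along cosets of $V$ based at points of $\partial\fri=G\cap\cald(A)$; since $\cald(A)$ is dense, $\sigma^2$ is continuous, $V\subset\cald(A)$ and $\Pi_G(\cald(A))\subset\cald(A)$, this constancy extends to every $h\in H$, which is the second part of (iii). Finally, with $\sigma_g^2=0$ Lemma~\ref{lemma-inc} turns $\beta_g(v)\in V$ into $Av\in V$ for all $v\in V$, and together with $\sigma_k(H)\subset V$ this gives $A^m\sigma_k(h)\in V$ for every $m\ge 0$, i.e.\ $A_\sigma\subset V$.

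For (iii) $\Rightarrow$ (i) I would check condition (ii) of Theorem~\ref{thm-main-2} for the state space $V$ and $\fri=\cald(A)$: Assumption~\ref{ass-open} holds because $\partial\fri=G\cap\cald(A)$, and Assumption~\ref{ass-factor} is the standing hypothesis; the constancy of $\sigma^2$ along $V$ makes $v\mapsto\sigma(g+v)$ square-affine and (the cone being trivial) parallel, conditions (\ref{cond-AR-1})--(\ref{cond-AR-2}) are void, and the remaining tangentiality (\ref{cond-AR-3}), $A(V)\subset V$ — whence also (\ref{beta-inc-V}), since $\beta_g(v)=Av$ by Lemma~\ref{lemma-inc} — holds by (iii). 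Theorem~\ref{thm-main-2} then produces the realization. For the concluding statement I would invoke Theorem~\ref{thm-main-2} once more, now with the finite-dimensional space $A_\sigma$ playing the role of $V$: this is where the $A$-quasi-exponential hypothesis is used, since by construction $\sigma_k(H)\subset A_\sigma$ and $A(A_\sigma)\subset A_\sigma$, while $A_\sigma\subset\cald(A^\infty)\subset\cald(A)$ and $A_\sigma\subset V$, so the constancy of $\sigma^2$ along $V$ restricts to constancy along $A_\sigma$; then all of (\ref{sigma-affine-parallel})--(\ref{cond-AR-3}) hold and Theorem~\ref{thm-main-2} gives the affine and admissible realization generated by $A_\sigma$ with initial points $\cald(A)$.

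I expect the crux to be (i) $\Rightarrow$ (iii): one has to be careful that the foliations furnished by the realization may depend on the starting point and may leave $\fri$, so that Theorem~\ref{thm-main-1} is usable only at points $g\in\partial\fri$ (reached by starting the realization at $h_0=g$), and the positivity argument forcing $\sigma_g^2=0$ genuinely needs affinity on the \emph{whole} space $V$ — it would fail on a proper cone, which is exactly what makes the cone-free case so rigid. The remaining extensions (from $\partial\fri$ to $H$, and from $V$ to $A_\sigma$) are routine.
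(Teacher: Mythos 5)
Your handling of (i) $\Leftrightarrow$ (ii) and of (i) $\Rightarrow$ (iii) is correct and essentially the paper's own argument: the rigidity fact you isolate (an $S^+(V)$-valued affine map on a whole linear space has vanishing linear part, since $\pm\sigma_g^2(v)\succeq 0$) is exactly Remark~\ref{remark-square-affine} specialised to a trivial cone, and you even supply the density/continuity step extending constancy from $\partial\fri$ to all of $H$, which the paper leaves implicit. The gap is in (iii) $\Rightarrow$ (i): you assert that condition (\ref{cond-AR-3}), i.e.\ $A(V)\subset V$, ``holds by (iii)''. It does not. Condition (iii) only gives $A_{\sigma}\subset V$, which yields $A(A_{\sigma})\subset A_{\sigma}\subset V$ but says nothing about $A$ applied to vectors of $V$ outside $A_{\sigma}$. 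Concretely, take $A=d/dx$, a constant volatility $\sigma\equiv\lambda$ with $\lambda(x)=e^{-x}$ (so $A_{\sigma}=\langle\lambda\rangle$) and $V=\langle\lambda,\mu\rangle$ for a smooth $\mu\in\cald(A^{\infty})$ with $\mu'\notin V$: then (iii) holds, yet $A(V)\not\subset V$, and by your own (i) $\Rightarrow$ (iii) argument (which derives $A(V)\subset V$ from (i) via $\sigma_g^2=0$ and Lemma~\ref{lemma-inc}) statement (i) must then fail. So a direct verification of Theorem~\ref{thm-main-2}(ii) for the state space $V$ cannot be extracted from (iii) alone.

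The paper's route for (iii) $\Rightarrow$ (i) is different and sidesteps this point: it verifies the hypotheses of Theorem~\ref{thm-main-2} for the smaller, automatically $A$-invariant space $A_{\sigma}$, using Lemma~\ref{lemma-extend-space} to transfer the (constant, hence square-affine) $\sigma^2$ from $V$ to $A_{\sigma}$ --- which is precisely the concluding assertion of the proposition; your proof of that concluding assertion is fine and coincides with the paper's. What is then still needed, in the paper as much as in any repair of your argument, is the passage from ``realization generated by $A_{\sigma}$'' back to ``realization generated by $V$'', which the paper leaves implicit; the example above shows this passage, and hence the equivalence with the literal, fixed $V$, is only available when $A(V)\subset V$, so either that invariance has to be argued separately or (i) has to be read as in Proposition~\ref{prop-qe-lin}, where one asks for a realization generated by \emph{some} subspace. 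You should flag this rather than fold it into an unproved appeal to (iii).
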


\begin{proof}
(i) $\Leftrightarrow$ (ii): This implication is obvious, because $V$ is a linear space.

\noindent (i) $\Rightarrow$ (iii): By Theorem~\ref{thm-main-2} we have $\sigma(H) \subset V^n$ and $A(V) \subset V$, which shows $A_{\sigma} \subset V$. Furthermore, by Remark~\ref{remark-square-affine}, for each $h \in H$ the mapping (\ref{map-qe-1}) is constant.

\noindent (iii) $\Rightarrow$ (i): According to Lemma~\ref{lemma-extend-space} and Theorem~\ref{thm-main-2}, the SPDE (\ref{SPDE-manifold}) has an affine realization generated by $A_{\sigma}$ with initial points $\cald(A)$ and with affine and admissible state processes.
\end{proof}

\begin{corollary}\label{cor-U-2}
Suppose that the volatility $\sigma$ is $A$-quasi-exponential, and that
\begin{align}\label{U-2-cond-1}
\dim \langle \sigma_k(H) \rangle &\leq 1 \quad \text{for all $k = 1,\ldots,n$} \quad \text{and}
\\ \label{U-2-cond-2} \langle \sigma_k(H) \rangle \cap \langle \sigma_l(H) \rangle &= \{ 0 \} \quad \text{for all $k,l = 1,\ldots,n$ with $k \neq l$.}
\end{align}
Then the following statements are equivalent:
\begin{enumerate}
\item[(i)] The SPDE (\ref{SPDE-manifold}) has an affine realization generated by $V$ with initial points $\cald(A)$ and with affine and admissible state processes.

\item[(ii)] The SPDE (\ref{SPDE-manifold}) has an affine realization generated by $V$ with initial points $\cald(A)$ and with affine state processes.

\item[(iii)] We have $A_{\sigma} \subset V$, and for each $h \in H$ the mapping 
\begin{align*}
v \mapsto \sigma(h + v) : V \to V^n 
\end{align*}
is constant.
\end{enumerate}
If the previous conditions are fulfilled, then the SPDE (\ref{SPDE-manifold}) has an affine realization generated by $A_{\sigma}$ with initial points $\cald(A)$ and with affine and admissible state processes.
\end{corollary}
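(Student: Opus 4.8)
The plan is to reduce Corollary~\ref{cor-U-2} to Proposition~\ref{prop-qe} by showing that, under the extra conditions~(\ref{U-2-cond-1}) and~(\ref{U-2-cond-2}), the constancy of the square-affine coefficient in Proposition~\ref{prop-qe}(iii) is equivalent to the constancy of $\sigma$ itself. The equivalence (i)~$\Leftrightarrow$~(ii) is immediate (as already noted in Proposition~\ref{prop-qe}), and the last assertion about a realization generated by $A_\sigma$ is inherited verbatim from Proposition~\ref{prop-qe} once we know (iii) of the corollary implies (iii) of the proposition. So the only real content is the equivalence of the two versions of~(iii).

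The direction (iii$_{\text{cor}}$)~$\Rightarrow$~(iii$_{\text{prop}}$) is trivial: if $v \mapsto \sigma(h+v)$ is constant for every $h$, then so is $v \mapsto \sigma^2(h+v) = \sigma(h+v)\sigma^*(h+v)$, hence a fortiori its second-difference map $v \mapsto \sigma^2_{h}$ vanishes, giving constancy modulo nothing; and $A_\sigma \subset V$ is part of both hypotheses. (One should also note $A_\sigma \subset V$ together with $\sigma(H) \subset A_\sigma$ forces $\sigma(H) \subset V^n$, which is what is needed to even speak of $\sigma^2 \in L(V)$.) For the converse, fix $h \in H$; under~(\ref{U-2-cond-1}) we may write $\sigma_k(h+v) = \lambda_k(v)\, e_k$ for each $k$, where $e_k$ spans $\langle \sigma_k(H)\rangle$ and $\lambda_k : V \to \bbr$ is a scalar function, and by~(\ref{U-2-cond-2}) the vectors $e_1,\dots,e_n$ are linearly independent. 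Then $\sigma^2(h+v) = \sum_{k=1}^n \lambda_k(v)^2\, e_k \otimes e_k$, and constancy of $v \mapsto \sigma^2(h+v)$ (which is the strong form of (iii$_{\text{prop}}$) when the second-difference map is constant — one has to first argue that ``constant modulo $\call$'' together with $A_\sigma \subset V$ actually upgrades to genuine constancy here, exactly as in the proof of Proposition~\ref{prop-qe} via Remark~\ref{remark-square-affine}) forces each $v \mapsto \lambda_k(v)^2$ to be constant, hence each $|\lambda_k|$ constant, hence each $v \mapsto \|\sigma_k(h+v)\|$ constant. The remaining step is to promote constancy of the norm to constancy of the vector: since $\sigma$ is continuous and $V$ (or $A_\sigma$) is connected, and $v \mapsto \sigma_k(h+v)$ takes values in the line $\bbr e_k$ with constant norm, it lands in the two-point set $\{\pm \rho_k e_k\}$; by connectedness and continuity it is constant. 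Doing this for every $h$ and every $k$ yields (iii$_{\text{cor}}$).

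The step I expect to be the main obstacle is the bookkeeping around the precise form of Proposition~\ref{prop-qe}(iii) versus what one actually extracts from~(\ref{U-2-cond-1})--(\ref{U-2-cond-2}): Proposition~\ref{prop-qe}(iii) is phrased with the map $v \mapsto \sigma^2(h+v)$ into $L(V,L(V))$ being \emph{constant}, i.e. the second difference $\sigma^2_h$ vanishing as an affine map's linear part, rather than $\sigma^2$ itself being constant on the leaf. One must be careful to transport this correctly through the square-affine machinery and the identity $A_\sigma \subset V$, so that what we feed into, and get out of, Proposition~\ref{prop-qe} is internally consistent; this is a routine but slightly delicate matching of definitions (cf. Remark~\ref{remark-square-affine} and Appendix~\ref{app-affine}). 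A minor secondary point is the connectedness/continuity argument for eliminating the sign ambiguity $\lambda_k \mapsto |\lambda_k|$, which is elementary but should be stated. With these in hand, the proof is: (i)~$\Leftrightarrow$~(ii) is clear; (i)/(ii)~$\Leftrightarrow$~(iii$_{\text{cor}}$) follows by combining Proposition~\ref{prop-qe} with the scalar-decomposition argument above; and the final ``generated by $A_\sigma$'' claim is then exactly Proposition~\ref{prop-qe}'s concluding statement.
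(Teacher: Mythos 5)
Your proposal is correct and follows essentially the same route as the paper: the paper's proof is a one-line citation of Propositions~\ref{prop-qe} and \ref{prop-sigma-constant}, and your inline scalar-decomposition argument (writing $\sigma_k(h+v)=\lambda_k(v)f_k$ with linearly independent $f_k$, reading off $\lambda_k^2$ from the rank-one decomposition of $\sigma^2$, and removing the sign ambiguity by continuity and connectedness) is precisely the content of Proposition~\ref{prop-sigma-constant}, stated there in matrix form. Your worry about upgrading ``constant modulo $\call$'' is unnecessary, since Proposition~\ref{prop-qe}(iii) already asserts genuine constancy of $v\mapsto\sigma^2(h+v)$, but this does not affect the correctness of the argument.
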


\begin{proof}
This is an immediate consequence of Propositions~\ref{prop-qe} and \ref{prop-sigma-constant}.
\end{proof}

\section{Sufficient conditions for the existence of affine realizations and construction of the maximal set of initial points}\label{sec-suff}

In this section, we present sufficient conditions for the existence of affine realizations with affine and admissible state processes. The general mathematical framework is that of Section \ref{sec-drift} (in particular we fix a state space of the type $\frc \oplus U$ and the drift is of the form $\alpha = S \sigma^2$), but we do not specify the set $\fri$ of initial points in advance. Instead of that, let $G \subset H$ be a closed subspace such that $H = G \oplus V$.

\begin{proposition}\label{prop-suff}
Suppose that Assumption~\ref{ass-factor} is fulfilled, that for each $g \in G$ we have (\ref{sigma-affine-parallel}), the mapping
\begin{align}\label{map-G-const}
g \mapsto \sigma_g^2 : G \to L(V,L(V))
\end{align}
is constant, and we have
\begin{align}\label{Ac-new}
Ac + S ( \sigma^2(c) - \sigma^2(0) ) &\in (\frc \oplus \langle c \rangle) \oplus U, \quad c \in \partial \frc
\end{align}
and (\ref{cond-AR-3}). Then the SPDE (\ref{SPDE-manifold}) has an affine realization generated by $\frc \oplus U$ with initial points
\begin{align}\label{def-I}
\mathfrak{I} = \{ h \in (G \cap \cald(A)) \oplus \mathfrak{C} \oplus U : \Pi_V(A \Pi_G h + S \sigma^2(\Pi_G h)) \in {\rm Int} \, \mathfrak{C} \oplus U \}
\end{align}
and with affine and admissible state processes, and the set of initial points has the decomposition $\fri = \partial \fri \oplus \frc \oplus U$, where the boundary is given by
\begin{align}\label{def-I-boundary}
\partial \mathfrak{I} = \{ g \in G \cap \cald(A) : \Pi_V(Ag + S \sigma^2(g) ) \in {\rm Int} \, \mathfrak{C} \oplus U \}.
\end{align}
\end{proposition}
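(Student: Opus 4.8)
The plan is to verify condition (ii) of Theorem~\ref{thm-main-2} for the set $\fri$ defined in (\ref{def-I}), and then to confirm that the decomposition $\fri = \partial\fri \oplus \frc \oplus U$ holds with boundary (\ref{def-I-boundary}) and that $G = \overline{\langle \partial \fri \rangle}$. First I would check the decomposition: since $V = C \oplus U$ and $H = G \oplus V$, any $h \in (G \cap \cald(A)) \oplus \frc \oplus U$ has a unique representation $h = g + c + u$ with $g = \Pi_G h \in G \cap \cald(A)$, $c \in \frc$, $u \in U$; because the defining condition in (\ref{def-I}) depends only on $\Pi_G h$, membership of $h$ in $\fri$ is equivalent to membership of $g$ in the set on the right-hand side of (\ref{def-I-boundary}). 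This immediately gives $\fri = \partial\fri \oplus \frc \oplus U$ with $\partial\fri$ as claimed, and in particular $\fri \subset \cald(A)$. That $\partial\fri$ generates a dense (indeed, equal) subspace of $G$ should follow from the openness of $\partial\fri$ in $G \cap \cald(A)$ with respect to the graph norm — which I establish next — together with the fact that $G \cap \cald(A)$ is dense in $G$ (the semigroup generated by $A$ restricts appropriately, or one uses that $\cald(A)$ is dense in $H$ and applies $\Pi_G$); so here $G = \overline{\langle \partial\fri \rangle}$, matching the standing hypothesis of Section~\ref{sec-affine-real}.

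Next I would verify Assumption~\ref{ass-open}, i.e. that $\partial\fri$ is open in $G \cap \cald(A)$ under $\|\cdot\|_{\cald(A)}$. The map $g \mapsto \Pi_V(Ag + S\sigma^2(g))$ is continuous from $(G \cap \cald(A), \|\cdot\|_{\cald(A)})$ to $V$: the term $g \mapsto Ag$ is continuous by definition of the graph norm, $\Pi_V$ is bounded, and $g \mapsto S\sigma^2(g)$ is continuous because $\sigma^2$ is Lipschitz (Assumption~\ref{ass-factor}(ii)) and $S$ is bounded. Since $\mathrm{Int}\,\frc \oplus U$ is an open subset of $V$, its preimage $\partial\fri$ is open in $G \cap \cald(A)$, as required. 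Assumption~\ref{ass-factor} also gives Assumption~\ref{ass-local-Lipschitz} (noted in the text), so both standing assumptions of Theorem~\ref{thm-main-2} are in force.

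It then remains to check, for each $g \in \partial\fri$, conditions (\ref{sigma-affine-parallel}), (\ref{cond-AR-1}), (\ref{cond-AR-2}) and (\ref{cond-AR-3}). Condition (\ref{sigma-affine-parallel}) holds by hypothesis for every $g \in G$, hence in particular for $g \in \partial\fri$. Condition (\ref{cond-AR-3}), $Au \in U$ for $u \in U$, is assumed directly. For (\ref{cond-AR-1}), the definition (\ref{def-I-boundary}) of $\partial\fri$ says precisely that $\Pi_V(Ag + S\sigma^2(g)) \in \mathrm{Int}\,\frc \oplus U$, which is contained in $\frc \oplus U$, giving (\ref{cond-AR-1}) (with the stronger interior property — that strictness is what the construction buys us, and it is needed because the hypothesis (\ref{Ac-new}) is phrased with $\sigma^2(0)$ rather than with $\sigma^2(g)$). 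For (\ref{cond-AR-2}) I would exploit that, by hypothesis, the map $g \mapsto \sigma_g^2$ in (\ref{map-G-const}) is constant on $G$; evaluating at $g = 0$ gives $\sigma_g^2 = \sigma_0^2$, i.e. $\sigma^2(g+c) - \sigma^2(g) = \sigma^2(c) - \sigma^2(0)$ for all $g \in G$ and $c \in V$, and in particular $\sigma_g^2(c) = \sigma^2(c) - \sigma^2(0)$ for $c \in \partial\frc$. Substituting this into (\ref{Ac-new}) yields $Ac + S\sigma_g^2(c) \in (\frc + \langle c \rangle) \oplus U$, which is exactly (\ref{cond-AR-2}). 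With (ii) of Theorem~\ref{thm-main-2} fully verified, that theorem gives the desired affine realization with affine and admissible state processes.

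The main obstacle I anticipate is the density/openness bookkeeping around $G \cap \cald(A)$ in the first two paragraphs: one must be careful that $\partial\fri$ is genuinely nonempty (so that $\fri \ne \emptyset$ as required) and that its closed span is all of $G$ rather than a proper subspace — this uses that $\mathrm{Int}\,\frc \ne \emptyset$ (as $\frc$ is a proper cone with $C = \langle\frc\rangle$) together with continuity of $g \mapsto \Pi_V(Ag + S\sigma^2(g))$ and density of $\cald(A)$, but it is the one place where the argument is not a pure substitution. The remaining verifications of (\ref{sigma-affine-parallel})--(\ref{cond-AR-3}) are, by contrast, direct consequences of the hypotheses once the identity $\sigma_g^2 = \sigma_0^2$ coming from constancy of (\ref{map-G-const}) is recorded.
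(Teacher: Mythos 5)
Your proposal is correct and follows essentially the same route as the paper's proof: read off the decomposition $\fri = \partial\fri \oplus \frc \oplus U$ and $\fri \subset \cald(A)$ from the definitions, obtain Assumption~\ref{ass-open} from the continuity of $g \mapsto \Pi_V(Ag + S\sigma^2(g))$ in the graph norm together with openness of ${\rm Int}\,\frc \oplus U$, verify (\ref{cond-AR-1}) directly from (\ref{def-I-boundary}), deduce (\ref{cond-AR-2}) from (\ref{Ac-new}) via the identity $\sigma_g^2 = \sigma_0^2$ coming from constancy of (\ref{map-G-const}), and conclude with Theorem~\ref{thm-main-2}. Your extra care about $G = \overline{\langle\partial\fri\rangle}$ and nonemptiness goes slightly beyond what the paper records, but does not change the argument.
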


\begin{proof}
Inspecting the definitions (\ref{def-I}) and (\ref{def-I-boundary}), we see that we have the decomposition $\fri = \partial \fri \oplus \frc \oplus U$, and by the definition (\ref{def-I}) of $\fri$ we see that $\fri \subset \cald(A)$. Noting that $\Pi_V \beta : (G \cap \cald(A), \| \cdot \|_{\cald(A)}) \to (V,\| \cdot \|_V)$ is continuous, that $\partial \fri = (\Pi_V \beta)^{-1}({\rm Int} \, \frc \oplus U)$ by the definition (\ref{def-I-boundary}), and that ${\rm Int} \, \frc \oplus U$ is open in $V$, we deduce that $\partial \fri$ is open in $G \cap \cald(A)$. Therefore, Assumption~\ref{ass-open} is fulfilled, and we also have $G = \overline{\langle \partial \fri \rangle}$, where the closure is taken with respect to the norm on $H$. Furthermore, by the definition (\ref{def-I-boundary}), condition (\ref{cond-AR-1}) is fulfilled for each $g \in \partial \fri$. Since the mapping (\ref{map-G-const}) is constant, for each $g \in \partial \fri$ and each $c \in \partial \frc$ by (\ref{Ac-new}) we obtain
\begin{align*}
Ac + S \sigma_g^2(c) = Ac + S \sigma_0^2(c) = Ac + S (\sigma^2(c) - \sigma^2(0)) \in (\frc \oplus \langle c \rangle) \oplus U,
\end{align*}
showing (\ref{cond-AR-2}). Consequently, by Theorem~\ref{thm-main-2} the SPDE (\ref{SPDE-manifold}) has an affine realization generated by $\frc \oplus U$ with initial points $\fri$ and with affine and admissible state processes.
\end{proof}

\begin{remark}
The condition that the mapping (\ref{map-G-const}) is constant comes from Proposition~\ref{prop-const-mod}.
\end{remark}

\begin{remark}
Inspecting the proof of Proposition~\ref{prop-suff}, we see that the set $\fri \subset \cald(A)$ given by (\ref{def-I}) is the maximal set of initial points such that $\partial \fri$ is open in $G \cap \cald(A)$ with respect to the graph norm $\| \cdot \|_{\cald(A)}$ (see Assumption~\ref{ass-open}) and condition (\ref{cond-AR-1}) is fulfilled.
\end{remark}

We will illustrate Proposition~\ref{prop-suff} in Section \ref{sec-HJMM-examples}, where we present examples of the HJMM equation and construct the maximal sets of initial points.

\section{The HJMM equation}\label{sec-HJMM}

In this section, we apply our results from the previous sections to the HJMM (Heath-Jarrow-Morton-Musiela) equation. This is a SPDE which models the term structure of interest rates in a market of zero coupon bonds.

Let us briefly introduce the model we consider. A zero coupon bond with maturity $T$ is a financial asset that pays the holder one monetary unit at $T$. Its price at $t \leq T$ can be written as the continuous discounting of one unit of the domestic currency
\begin{align*}
P(t,T) = \exp \bigg( -\int_t^T f(t,s) ds \bigg),
\end{align*}
where $f(t,T)$ is the rate prevailing at time $t$ for instantaneous borrowing at time $T$, also called the forward rate for date $T$.

After transforming the original HJM (Heath-Jarrow-Morton) dynamics of the forward rates (see \cite{HJM}) by means of the Musiela parametrization
$r_t(x) = f(t,t+x)$ (see \cite{Musiela}), the forward rates can be considered as a weak solution to the HJMM (Heath-Jarrow-Morton-Musiela) equation
\begin{align}\label{HJMM}
\left\{
\begin{array}{rcl}
dr_t & = & \big( \frac{d}{dx} r_t + \alpha_{\rm HJM}(r_t) \big) dt +
\sigma(r_{t})dW_t
\medskip
\\ r_0 & = & h_0,
\end{array}
\right.
\end{align}
which is a particular SPDE of the type (\ref{SPDE-manifold}). The state space of the HJMM equation (\ref{HJMM}) is a separable Hilbert space $H$ of forward curves $h : \bbr_+ \to \bbr$, and $d/dx$ denotes the differential operator, which is generated by the translation semigroup. In order to ensure absence of arbitrage in the bond market, we consider the HJMM equation (\ref{HJMM}) under a martingale measure. Then the drift term $\alpha_{\rm HJM} : H \to H$ is given by
\begin{align}\label{HJM-drift}
\alpha_{\rm HJM}(h) = \sum_{k=1}^n \sigma_k(h) \Sigma_k(h), \quad h \in H,
\end{align}
where $\Sigma = (\Sigma_1,\ldots,\Sigma_n) : H \to H^n$ is defined as
\begin{align*}
\Sigma_k(h) := \int_0^{\bullet} \sigma_k(h)(\eta) d\eta \quad \text{for $h \in H$ and $k=1,\ldots,n$.} 
\end{align*}
We refer, e.g., to \cite{fillnm} for further details concerning the derivation of (\ref{HJMM}) and the drift condition (\ref{HJM-drift}). Furthermore, the following choice of the state space, which has been utilized in \cite{fillnm}, has all properties which we require in the sequel. We fix a nondecreasing $C^1$-function $w : \bbr_+ \to [1,\infty)$ such that $w^{-1/3} \in \mathcal{L}^1(\bbr_+)$, and denote by $H$ the space of all absolutely continuous functions $h : \mathbb{R}_+
\rightarrow \mathbb{R}$ such that
\begin{align*}
\| h \|_H := \bigg( |h(0)|^2 + \int_{\mathbb{R}_+} |h'(x)|^2
w(x) dx \bigg)^{1/2} < \infty.
\end{align*}
Apart from this particular choice of the state space $H$ and the drift (\ref{HJM-drift}), the general mathematical framework is that of Section \ref{sec-affine-real}.

\begin{assumption}\label{ass-factor-HJMM}
We suppose that the following conditions are fulfilled:
\begin{enumerate}
\item We have $V \subset \cald(d/dx)$.

\item We have $\sigma(H) \subset V^n$.

\item The mapping $\sigma^2 : H \to L(V)$ is Lipschitz continuous.
\end{enumerate}
\end{assumption}

The following result shows that Assumption~\ref{ass-factor} is fulfilled, which implies that the HJMM equation (\ref{HJMM}) belongs to the framework considered in the previous sections.

\begin{proposition}\label{prop-HJM-linear}
There is a linear operator $S \in L(L(V),H)$ with ${\rm ran}(S) \subset \cald(d/dx)$ such that $\alpha_{\rm HJM} = S \sigma^2$.
\end{proposition}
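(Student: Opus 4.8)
The goal is to produce a linear operator $S \in L(L(V),H)$ with $\mathrm{ran}(S) \subset \cald(d/dx)$ satisfying $\alpha_{\rm HJM} = S \sigma^2$, where by Assumption~\ref{ass-factor-HJMM} we already know that $\sigma(H) \subset V^n$, that $V \subset \cald(d/dx)$, and that $\sigma^2 : H \to L(V)$ is Lipschitz. The first step is to rewrite the HJM drift (\ref{HJM-drift}) as a function of $\sigma^2$ alone. Since $\sigma_k(h) \in V$ for each $h$ and each $k$, I would pick a basis $e_1,\ldots,e_N$ of the finite dimensional space $V$ and write $\sigma_k(h) = \sum_{i} \langle \sigma_k(h), e_i^* \rangle e_i$ for a suitable dual basis, so that the Stratonovich-type product $\sum_k \sigma_k(h)(x) \int_0^x \sigma_k(h)(\eta)\, d\eta$ becomes $\sum_{i,j} (\sigma^2(h))_{ij} \, e_i(x) \int_0^x e_j(\eta)\, d\eta$, exhibiting $\alpha_{\rm HJM}(h)$ as a fixed linear image of the matrix $\sigma^2(h) \in L(V)$. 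This is the key identity; everything else is checking that the linear map so produced lands in $H$ and in $\cald(d/dx)$.

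\textbf{Defining $S$.} On the basis matrices $E_{ij}$ of $L(V)$ (i.e.\ $E_{ij}$ sends $e_j \mapsto e_i$ and the other basis vectors to $0$) I would set
\begin{align*}
S(E_{ij}) := e_i \cdot \Big( \int_0^{\bullet} e_j(\eta)\, d\eta \Big),
\end{align*}
that is, $S(E_{ij})$ is the function $x \mapsto e_i(x) \int_0^x e_j(\eta)\, d\eta$, and extend $S$ linearly to all of $L(V)$; this forces $\alpha_{\rm HJM} = S\sigma^2$ by the identity from the first paragraph (one has to be slightly careful that the symmetrization $\sigma^2 = \sigma\sigma^*$ matches the symmetric pairing in the drift, but since $\sum_k \sigma_k \otimes \Sigma_k$ only sees the symmetric part this is automatic once one checks $\int_0^x (e_i e_j' + e_j e_i') = e_i(x)e_j(x) \cdot(\text{boundary terms})$, or more simply one just defines $S$ directly from (\ref{HJM-drift}) without worrying about symmetry and then notes the value depends only on $\sigma\sigma^*$). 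Since $L(V)$ is finite dimensional, $S$ is automatically bounded provided each $S(E_{ij}) \in H$, so the only real content is the two regularity claims.

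\textbf{Checking $S(E_{ij}) \in \cald(d/dx) \subset H$.} Here I would use the explicit structure of the state space: $e_i, e_j \in V \subset \cald(d/dx) \subset H$, so $e_i, e_j$ are absolutely continuous with $e_i', e_j' \in L^2(w\,dx)$ and bounded (the $w$-weighted Sobolev space embeds into bounded functions because $w \geq 1$). The primitive $\Phi_j(x) := \int_0^x e_j(\eta)\, d\eta$ is then Lipschitz, hence locally absolutely continuous, and the product $e_i \Phi_j$ is absolutely continuous with derivative $e_i' \Phi_j + e_i e_j'$. To see that $e_i\Phi_j \in H$ and moreover $e_i \Phi_j \in \cald(d/dx)$ one needs $(e_i\Phi_j)' = e_i'\Phi_j + e_i e_j'$ and its value at $0$ to satisfy $\|e_i'\Phi_j + e_i e_j'\|_{L^2(w\,dx)} < \infty$ together with $(e_i\Phi_j)' \in H$ (the latter being the requirement for membership in the domain of the translation generator on this space); this follows from the multiplicative estimates for this forward-curve space, namely that pointwise products of elements of $H$ and primitives of elements of $H$ stay in $H$ with the corresponding norm bound — exactly the estimate already used to make the HJM drift (\ref{HJM-drift}) well defined as an $H$-valued map in \cite{fillnm}. \textbf{The main obstacle} is precisely this last regularity bookkeeping: verifying $e_i\Phi_j \in \cald(d/dx)$ rather than merely $e_i\Phi_j \in H$, which amounts to an extra derivative landing in the weighted space; I would handle it by invoking the algebra/multiplier properties of the space $H$ from \cite{fillnm} (where $\alpha_{\rm HJM}$ is shown to map $H$ into $\cald(d/dx)$ under the standing hypotheses on $w$ and $\sigma$), so that no new hard analysis is needed — one simply restricts those known mapping properties to the finite dimensional span of the $e_i\Phi_j$. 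Once this is in place, boundedness of $S$ is automatic by finite-dimensionality of $L(V)$, and $\mathrm{ran}(S) \subset \cald(d/dx)$ and $\alpha_{\rm HJM} = S\sigma^2$ hold by construction.
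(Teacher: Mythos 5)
Your proposal is correct and follows essentially the same route as the paper: write the drift in coordinates with respect to a basis $\lambda$ of $V$, observe that $\sum_k \sigma_k \Sigma_k = \sum_{i,j} [(\sigma^{\lambda})^{\top} \sigma^{\lambda}]_{ij}\, \lambda_j \Lambda_i$, and define $S$ on matrix units by $E_{ij} \mapsto \lambda_i \Lambda_j$ (the paper packages this as $S = S_{\lambda} \Psi^{\lambda,\lambda}$ and handles the range condition by the same appeal to the properties of the forward-curve space). The one point you should make explicit is that identifying the coefficient matrix $(\sigma^{\lambda})^{\top} \sigma^{\lambda}$ with the matrix of $\sigma^2 = \hat{\sigma}\hat{\sigma}^*$ requires the basis to be orthonormal with respect to $\langle \cdot,\cdot \rangle_V$ — which may be assumed without loss of generality by Remark~\ref{remark-orth-basis} and is exactly Lemma~\ref{lemma-matrices}; this, rather than symmetry of the pairing, is the real content of your ``symmetrization'' worry.
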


\begin{proof}
Let $\lambda = (\lambda_1,\ldots,\lambda_d)$ be a basis of $V$ such that $(\lambda_1,\ldots,\lambda_m)$ is a basis of $\frc$, where $m = \dim C$. As pointed out in Remark~\ref{remark-orth-basis}, we may assume, without loss of generality, that $\lambda$ is an orthonormal basis of $V$. We define $S_{\lambda} \in L(\bbr^{d \times d},H)$ as
\begin{align*}
S_{\lambda} \Phi = \langle \Phi \cdot \lambda, \Lambda \rangle, \quad \Phi \in \bbr^{d \times d}.
\end{align*}
Here $\Phi \cdot \lambda \in H^d$ is understood in the sense of matrix multiplication, the vector $\Lambda = (\Lambda_1,\ldots,\Lambda_d) \in H^d$ is given by the primitives $\Lambda_i := \int_0^{\bullet} \lambda_i(\eta) d\eta$ for $i=1,\ldots,d$, and we use the notation
\begin{align*}
\langle h,g \rangle := \sum_{i=1}^d h_i g_i \quad \text{for $h,g \in H^d$.}
\end{align*}
Since $V \subset \cald(d/dx)$, we have ${\rm ran}(S_{\lambda}) \subset \cald(d/dx)$, due to the properties of the state space $H$. Let $\Psi^{\lambda,\lambda} : L(V) \to \bbr^{d \times d}$ be the canonical isomorphism from Definition~\ref{def-matrix-of-operator}. We define $S \in L(L(V),H)$ as $S := S_{\lambda} \Psi^{\lambda,\lambda}$. Then, by (\ref{HJM-drift}) and Lemma~\ref{lemma-matrices} we obtain
\begin{align*}
\alpha_{\rm HJM} &= \langle \sigma,\Sigma \rangle = \langle \sigma^{\lambda} \cdot \lambda, \sigma^{\lambda} \cdot \Lambda \rangle = \langle (\sigma^{\lambda})^{\top} \cdot \sigma^{\lambda} \cdot \lambda,\Lambda \rangle 
\\ &= S_{\lambda} ((\sigma^{\lambda})^{\top} \cdot \sigma^{\lambda}) = S_{\lambda} (\sigma^2)^{\lambda,\lambda} = S_{\lambda} \Psi^{\lambda,\lambda} (\Psi^{\lambda,\lambda})^{-1} (\sigma^2)^{\lambda,\lambda} = S \sigma^2,
\end{align*}
and we have ${\rm ran}(S) \subset \cald(d/dx)$, because ${\rm ran}(S_{\lambda}) \subset \cald(d/dx)$, completing the proof.
\end{proof}

If the volatility $\sigma$ is Lipschitz continuous and $(d/dx)$-quasi-exponential, then the HJMM equation (\ref{HJMM}) has an affine realization generated by a subspace; see, for example \cite[Prop.~6.4]{Bj_Sv}, \cite[Prop.~6.2]{Tappe-Wiener} or \cite[Prop.~6.2]{Tappe-affin-real}. The corresponding state processes are not necessarily affine processes; Proposition~\ref{prop-qe} and Corollary \ref{cor-U-2} provide criteria on the volatility $\sigma$.

\begin{example}\label{ex-Damir}
Suppose that the volatility $\sigma : H \to H$ is of the form
\begin{align}\label{DDV}
\sigma(h) = \Phi(h) \lambda
\end{align}
with a continuous mapping $\Phi : H \to \bbr$ and $\lambda(x) = e^{-\gamma x}$, $x \in \bbr_+$ for some constant $\gamma \in (0,\infty)$. It is well-known (see, for example \cite{Tappe-Wiener}) that the HJMM equation (\ref{HJMM}) has an affine realization generated by the subspace $V = \langle \lambda,\lambda^2 \rangle$, but the state processes are generally not affine. This does not contradict Proposition~\ref{prop-Damir}; since 
\begin{align*}
S \sigma^2(h) = \alpha_{\rm HJM}(h) = \Phi^2(h) \lambda \Lambda = \Phi^2(h) \frac{\lambda - \lambda^2}{\gamma},
\end{align*}
we have $S(R) = \langle \lambda - \lambda^2 \rangle$, and hence $V \cap S(R) \neq \{ 0 \}$.
\end{example}

For the rest of this section, we present some consequences concerning the existence of one-dimensional realizations. For this purpose, we assume $\dim V = 1$, and that the volatility $\sigma : H \to H$ is of the form (\ref{DDV}) with a continuous mapping $\Phi : H \to \bbr$ and a function $\lambda \in V$. We distinguish between the two cases $\dim U = 1$ and $\dim \frc = 1$, where we recall that $V = C \oplus U$ and $C = \langle \frc \rangle$. First, we assume that $\dim U = 1$. The following consequence complements results about the existence of affine realizations for the Hull-White extension of the Vasi\u{c}ek model; see, for example \cite[Prop.~7.2]{Bj_Sv}.

\begin{proposition}
The following statements are equivalent:
\begin{enumerate}
\item[(i)] The HJMM equation (\ref{HJMM}) has an affine realization generated by $U$ with initial curves\footnote[4]{In the context of the HJMM equation, we agree to speak about initial curves instead of initial points.} $\cald(d/dx)$.

\item[(ii)] The HJMM equation (\ref{HJMM}) has an affine realization generated by $U$ with initial curves $\cald(d/dx)$ and with affine and admissible state processes.

\item[(iii)] There are constants $\rho,\gamma \in \bbr$ such that
\begin{align}\label{exponential-form}
\lambda(x) = \rho \cdot e^{-\gamma x}, \quad x \in \bbr_+,
\end{align}
and for each $h \in H$ the mapping $u \mapsto \Phi(h + u) : U \to \bbr$ is constant.
\end{enumerate}
\end{proposition}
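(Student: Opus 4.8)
The plan is to establish the chain of equivalences (i) $\Leftrightarrow$ (ii) $\Leftrightarrow$ (iii) by reducing everything to the general result Theorem~\ref{thm-main-2}, with $\frc = \{ 0 \}$ so that $V = C \oplus U = U$ is a one-dimensional subspace and $\partial \frc = \emptyset$. The equivalence (i) $\Leftrightarrow$ (ii) should be essentially formal: since $\dim U = 1$, any affine state process on the one-dimensional linear state space $U$ is automatically admissible (there is no boundary to respect, as $\partial \frc = \emptyset$ makes conditions (\ref{cond-AR-2}) vacuous and the inward-pointing/parallel conditions trivial for a linear state space); one invokes the version of Theorem~\ref{thm-main-1} for affine (not necessarily admissible) realizations mentioned in Remark~\ref{remark-add}, noting that $\sigma$ of the form (\ref{DDV}) with $\Phi$ continuous is not a priori Lipschitz, so some care is needed — but this is exactly the situation already handled in the literature on one-dimensional realizations for the HJMM equation, and I would cite the relevant tangential characterization rather than reprove it.

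The substantive direction is (iii) $\Rightarrow$ (ii) and its converse. First I would show (iii) $\Rightarrow$ (ii): assume $\lambda(x) = \rho e^{-\gamma x}$ and that $u \mapsto \Phi(h+u)$ is constant on $U$ for each $h$. Then for $g \in G$ and $u \in U$ we have $\sigma(g+u) = \Phi(g+u)\lambda = \Phi(g)\lambda$, so $v \mapsto \sigma(g+v)$ is constant, hence trivially square-affine and parallel, giving (\ref{sigma-affine-parallel}). Moreover $\sigma^2(g+u) = \Phi^2(g)\,\lambda\lambda^*$, which is constant in $u$, so $\sigma_g^2 \equiv 0$ and the mapping (\ref{map-G-const}) is constant. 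Condition (\ref{cond-AR-3}), namely $Au \in U$, reduces to $(d/dx)\lambda \in \langle \lambda \rangle$, which holds precisely because $\lambda$ is a (scalar multiple of an) exponential: $(d/dx)(\rho e^{-\gamma x}) = -\gamma \rho e^{-\gamma x} \in U$. Condition (\ref{Ac-new}) is vacuous since $\partial \frc = \emptyset$. Thus the hypotheses of Proposition~\ref{prop-suff} are met, yielding an affine realization generated by $U$ with affine and admissible state processes; one then checks that the resulting set of initial curves (\ref{def-I}) equals $\cald(d/dx)$, since with $\frc = \{0\}$ the interior condition $\Pi_V(A\Pi_G h + S\sigma^2(\Pi_G h)) \in {\rm Int}\,\frc \oplus U = U$ is automatic, and conversely any $h \in \cald(d/dx)$ decomposes as $\Pi_G h + \Pi_U h$ with $\Pi_G h \in G \cap \cald(d/dx)$. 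For the converse (ii) $\Rightarrow$ (iii), I would apply Theorem~\ref{thm-main-2}: from (ii) we get $\fri = \cald(d/dx) \subset \cald(d/dx)$ trivially, and for each $g \in \partial\fri$ conditions (\ref{sigma-affine-parallel})--(\ref{cond-AR-3}). Condition (\ref{cond-AR-3}) forces $(d/dx)\lambda \in U = \langle \lambda \rangle$, i.e. $\lambda' = -\gamma\lambda$ for some constant $\gamma$, whence $\lambda(x) = \rho e^{-\gamma x}$ — this gives (\ref{exponential-form}). For the constancy of $u \mapsto \Phi(h+u)$: square-affineness of $u \mapsto \sigma(g+u) = \Phi(g+u)\lambda$ on the one-dimensional space $U$ means $u \mapsto \Phi^2(g+u)$ is affine; combined with the conclusion of Proposition~\ref{prop-const-mod} — here one computes $S\sigma^2(h) = \Phi^2(h)\Lambda$ where $\Lambda = \int_0^\bullet \lambda$, so $R = \langle \lambda\lambda^* \rangle$ and $S(R) = \langle \Lambda \rangle = \langle(\lambda - \text{const})/\gamma\rangle$; checking whether $\calk = S^{-1}(V) \cap R$ is trivial — one finds that $\sigma_g^2$ must be constant modulo $\call$, forcing $\Phi^2(g+u) - \Phi^2(g)$ to be constant in $u$, and an affine map that is also bounded/constant modulo the relevant subspace on a line must be constant, giving $\Phi(g+u)^2 = \Phi(g)^2$, hence (after a sign argument using continuity of $\Phi$) $\Phi(g+u) = \Phi(g)$.

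The main obstacle I expect is the careful bookkeeping in the converse direction, specifically showing that square-affineness together with the ``constant modulo $\call$'' conclusion of Proposition~\ref{prop-const-mod} forces genuine constancy of $\Phi$ rather than merely an affine dependence — this hinges on correctly identifying $\calk$ and $\call$ for the specific operator $S$ arising from the HJM drift, and on the elementary but slightly delicate point that a nonnegative affine function on $\bbr$ whose square-root jump is constrained must be constant, plus the continuity argument to remove the sign ambiguity when passing from $\Phi^2$ back to $\Phi$. A secondary technical point is the non-Lipschitz nature of $\sigma$ when $\Phi$ is merely continuous, which means the direct application of Theorem~\ref{thm-main-2} (whose hypotheses in Assumption~\ref{ass-factor} require $\sigma^2$ Lipschitz) needs either an additional regularization step or an appeal to the weaker tangential characterization noted in Remark~\ref{remark-add}; I would route around this by working with the tangential conditions directly and invoking Proposition~\ref{prop-SDE} for the existence half, rather than quoting Theorem~\ref{thm-main-2} verbatim.
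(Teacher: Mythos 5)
Your treatment of (ii) $\Leftrightarrow$ (iii) and of (ii) $\Rightarrow$ (i) is essentially sound and close in spirit to the paper's route (Theorem~\ref{thm-main-2} to extract $Au\in U$, hence $\lambda'\in\langle\lambda\rangle$ and the exponential form; square-affineness plus nonnegativity of $\Phi^2$ on a line to force constancy of $\Phi^2$, then continuity to fix the sign). The paper packages the second half into Corollary~\ref{cor-U-2} via Proposition~\ref{prop-sigma-constant}, and your use of Proposition~\ref{prop-suff} for (iii) $\Rightarrow$ (ii) is a legitimate alternative. Two small slips there: $S\sigma^2(h)=\alpha_{\rm HJM}(h)=\Phi^2(h)\,\lambda\Lambda$, not $\Phi^2(h)\Lambda$, so $S(R)=\langle\lambda\Lambda\rangle$; and Proposition~\ref{prop-const-mod} controls the dependence on $g\in\partial\fri$, not on $u\in U$, so it is not the tool that forces $u\mapsto\Phi^2(g+u)$ to be constant — that comes solely from square-affineness plus $\Phi^2\geq 0$ (a nonnegative affine function on $\bbr$ is constant), which you do mention.

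The genuine gap is (i) $\Rightarrow$ (ii). You call this "essentially formal" and justify it only by observing that on the linear state space $U$ (where $\frc=\{0\}$, so $\partial\frc=\emptyset$) any \emph{affine} state process is automatically \emph{admissible}. That observation is correct, but it addresses the wrong half of the implication: the substantive content of (i) $\Rightarrow$ (ii) is that a mere affine realization (an invariant foliation by affine subspaces) automatically has state processes with \emph{affine characteristics}, i.e.\ that $v\mapsto\sigma^2(g+v)$ is affine and $v\mapsto\Pi_V\beta(g+v)$ is affine on each leaf. This is not formal and is false in general — Example~\ref{ex-Damir} in the paper exhibits exactly this failure for the same volatility structure with $\dim V=2$. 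The version of Theorem~\ref{thm-main-1} from Remark~\ref{remark-add} for plain affine realizations only yields the tangential conditions ($\fri\subset\cald(A)$ and (\ref{beta-inc-V})); it does not yield square-affineness. The paper closes this gap with Proposition~\ref{prop-Damir}, which requires verifying $V\cap S(R)=\{0\}$ and $\ker(S)\cap R=\{0\}$ for $V=\langle\lambda\rangle$, $R=\langle\lambda\lambda^*\rangle$, $S(R)=\langle\lambda\Lambda\rangle$ — here $\lambda\Lambda\notin\langle\lambda\rangle$ because $\Lambda(0)=0$ and $\Lambda'=\lambda$ — and then concludes that $\sigma_g^2$ is linear, hence $\sigma$ square-affine and (by Lemma~\ref{lemma-sq-affine-affine}) $\Pi_V\beta(g+\cdot)$ affine. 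Alternatively one can argue directly from (\ref{beta-inc-V}): writing $u=t\lambda$, the condition $t\lambda'+(\Phi^2(g+t\lambda)-\Phi^2(g))\lambda\Lambda\in\langle\lambda\rangle$ together with $\lambda\Lambda\notin\langle\lambda\rangle$ forces $t\mapsto\Phi^2(g+t\lambda)$ to be affine, and nonnegativity then forces it constant, giving (iii) directly. As it stands, your chain of implications never leaves statement (i), so the equivalence is not established.
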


\begin{proof}
(i) $\Rightarrow$ (ii): This implication follows from Proposition~\ref{prop-Damir}, and since $U$ is a linear space.

\noindent (ii) $\Rightarrow$ (i): This implication is obvious.

\noindent (ii) $\Leftrightarrow$ (iii): This equivalence is a consequence of Theorem~\ref{thm-main-2} and Corollary \ref{cor-U-2}.
\end{proof}

Now, suppose that $\dim \frc = 1$, and let $\fri \subset \cald(d/dx)$ be a set of initial curves of the form $\fri = \partial \fri \oplus \frc$ such that $H = G \oplus C$, where $G := \overline{\langle \partial \fri \rangle}$. Without loss of generality, we assume that $\lambda \in \frc$. The following two consequences complement results about the existence of affine realizations for the Hull-White extension of the Cox-Ingersoll-Ross model; see, for example \cite[Prop.~7.3]{Bj_Sv}.

\begin{proposition}
Suppose that the HJMM equation (\ref{HJMM}) has an affine realization generated by $\frc$ with initial curves $\mathfrak{I}$. Then the HJMM equation (\ref{HJMM}) has an affine realization generated by $\frc$ with initial curves $\mathfrak{I}$ and with affine state processes, and there are a mapping $\Psi : H \to \bbr$, a continuous linear functional $\ell \in H^*$ with $\ell(\lambda) = 1$ and $G \subset \ker(\ell)$, and constants $\rho > 0$ and $\gamma \in \bbr$ such that
\begin{align}\label{Psi-zero}
&\Phi(h) = \rho \sqrt{\Psi(\Pi_G h) + \ell(h)}, \quad h \in \fri,
\\ \label{Riccati-pre} &\frac{d}{dx} \lambda + \rho^2 \lambda \Lambda + \gamma \lambda = 0.
\end{align}
\end{proposition}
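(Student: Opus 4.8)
The plan is to apply Theorem~\ref{thm-main-2} (via Proposition~\ref{prop-HJM-linear}, which guarantees Assumption~\ref{ass-factor} holds with $\alpha_{\rm HJM} = S\sigma^2$) to extract structural information from the hypothesis that the HJMM equation has an affine realization generated by $\frc$. First I would note that in the present one-dimensional setting $V = C = \frc \oplus U$ with $\dim V = 1$ and $\dim\frc = 1$, so $U = \{0\}$ and $V = C = \langle\lambda\rangle$. The canonical inner product $\langle\cdot,\cdot\rangle_V$ on $V$ is normalized so that $\lambda$ (the normed generator of $\frc$) is a unit vector. Since $\sigma(h) = \Phi(h)\lambda$, we have $\sigma^2(h) = \Phi^2(h)\,\mathrm{id}_V$ under this normalization, i.e. $\sigma^2(h)$ is just multiplication by $\Phi^2(h)$ on the line $V$.

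Next, the existence of an affine realization gives (by Remark~\ref{remark-add}, which says the conclusions $\fri\subset\cald(d/dx)$ and (\ref{beta-inc-V}) are necessary) that $\fri \subset \cald(d/dx)$ and $\beta_g(v)\in V$ for $g\in\partial\fri$, $v\in\frc$. Because $\dim V = 1$, there is no room for $S(R)$ to stick out of $V$ transversally in a way that obstructs squaraffinity: more precisely, once (\ref{beta-inc-V}) holds, Lemma~\ref{lemma-inc} gives $Av + S\sigma_g^2(v)\in V$ for all $v\in\frc$, and since $A = d/dx$ maps $V\to V$ is not automatic — but I would argue that the affine-realization hypothesis forces $A(V)\subset V$ here too (this is the content of the tangential conditions combined with $\dim V=1$; alternatively invoke that any FDR for HJMM with $\sigma$ of the form (\ref{DDV}) with $\lambda\in V$ and the realization generated by $V$ itself forces $V$ to be $(d/dx)$-invariant). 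Then $S\sigma_g^2 \in L(V,V)$, hence $v\mapsto\sigma^2(g+v)$ is affine on the one-dimensional space $V$ automatically, which is precisely squarafffinity; by Lemma~\ref{lemma-sq-affine-affine} the drift map is affine, so by Theorem~\ref{thm-main-2} (the admissibility conditions (\ref{cond-AR-1})--(\ref{cond-AR-3}) being automatic or checkable in one dimension — (\ref{cond-AR-3}) is vacuous as $U=\{0\}$, and (\ref{cond-AR-1}),(\ref{cond-AR-2}) reduce to sign conditions that hold because $\frc$ is a ray and $\sigma$ is parallel) we get the affine (indeed affine and admissible) realization claim.

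Now I turn to the explicit form. Squarefffinity of $v\mapsto\sigma^2(g+v) = \Phi^2(g+v)\,\mathrm{id}_V$ means $v\mapsto\Phi^2(g+v)$ is affine on $\frc$ for each $g\in\partial\fri$, and Proposition~\ref{prop-const-mod} tells us $g\mapsto\sigma_g^2$ is constant modulo $\call = L(V,\calk)$; in one dimension, tracking $\calk = S^{-1}(V)\cap R$ carefully, this yields that the linear part of $\Phi^2$ is independent of $g$. Writing $\Phi^2(g+t\lambda) = \Psi(g) + t\cdot(\text{const})$ and comparing with $\Phi^2(h) = \rho^2(\Psi(\Pi_G h) + \ell(h))$ identifies $\ell$ as the linear functional reading off the $\lambda$-coordinate (normalized so $\ell(\lambda)=1$), with $G\subset\ker\ell$ since $\Psi$ absorbs the $\partial\fri$-dependence; this gives (\ref{Psi-zero}) with $\rho>0$ the square root of the (positive) linear slope and $\Psi$ defined on $H$ by the obvious extension. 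Finally, the condition $\beta_g(v)\in V$ unwound via $\beta_g(t\lambda) = t\,(d/dx)\lambda + S\sigma_g^2(t\lambda) = t\,(d/dx)\lambda + t\rho^2 S(\lambda\Lambda$-type term$)$; computing $S\sigma^2$ from (\ref{HJM-drift}) as in Example~\ref{ex-Damir} gives $S(\Phi^2\,\mathrm{id}_V)(h) = \Phi^2(h)\lambda\Lambda$ where $\Lambda = \int_0^\bullet\lambda$, so the requirement that $\beta_g(\lambda) = (d/dx)\lambda + \rho^2\lambda\Lambda$ lie in $V = \langle\lambda\rangle$ — together with the forthcoming Riccati analysis that $\lambda$ be an eigenfunction-like object — forces $(d/dx)\lambda + \rho^2\lambda\Lambda = -\gamma\lambda$ for some constant $\gamma$, which is exactly (\ref{Riccati-pre}). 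I expect the main obstacle to be bookkeeping the normalization of the inner product $\langle\cdot,\cdot\rangle_V$ and the identification of $\calk$ and $\ell$ precisely enough to pin down that $G\subset\ker(\ell)$ and $\ell(\lambda)=1$ simultaneously, and making rigorous the step that the affine-realization hypothesis forces $(d/dx)\lambda\in V$ rather than merely $\beta_g(v)\in V$ — this likely needs the quasi-exponential machinery or a direct ODE argument showing that $\lambda$, solving a first-order linear ODE with the primitive term, must be of the asserted exponential type, feeding back into (\ref{Riccati-pre}).
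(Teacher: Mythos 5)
Your endgame (reading off (\ref{Psi-zero}) from constancy of the slope and (\ref{Riccati-pre}) from $\frac{d}{dx}\lambda+\rho^2\lambda\Lambda\in\langle\lambda\rangle$) matches the paper, but the step by which you obtain square-affineness is wrong, and in a way that contradicts the very conclusion being proved. You propose to deduce $S\sigma_g^2(v)\in V$ by subtracting $Av$ from $\beta_g(v)$, claiming the affine-realization hypothesis forces $A(V)\subset V$, i.e.\ $\frac{d}{dx}\lambda\in\langle\lambda\rangle$. But (\ref{Riccati-pre}) itself says $\frac{d}{dx}\lambda=-\rho^2\lambda\Lambda-\gamma\lambda$, and $\lambda\Lambda\notin\langle\lambda\rangle$ whenever $\lambda\neq 0$ (if $\lambda\Lambda=c\lambda$, then $\Lambda\equiv c$ on the open nonempty set $\{\lambda\neq 0\}$, whence $\lambda=\Lambda'=0$ there). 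So for $\rho>0$ the invariance $A(V)\subset V$ fails precisely in the situation the proposition describes; the tangential condition only ever yields $Av+S\sigma_g^2(v)\in V$, never $Av\in V$ separately, and neither of your two suggested justifications repairs this. Moreover, even granting $S\sigma_g^2(v)\in V$, a map from the ray $\frc$ into a one-dimensional space is not ``automatically affine'': a priori $t\mapsto\Phi^2(g+t\lambda)$ could be any nonnegative function of $t\geq 0$.

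The mechanism that actually works is the transversality you set aside: here $\sigma^2(h)=\Phi^2(h)\,\langle\cdot,\lambda\rangle_V\lambda$ and $S\sigma^2(h)=\alpha_{\rm HJM}(h)=\Phi^2(h)\lambda\Lambda$, so $S(R)=\langle\lambda\Lambda\rangle$, and by the computation above $V\cap S(R)=\{0\}$ and $\ker(S)\cap R=\{0\}$. Writing $Av=\beta_g(v)-S\sigma_g^2(v)$ (Lemma~\ref{lemma-inc}), the left-hand side is linear in $v$ and the right-hand side is its unique decomposition with respect to the direct sum $V\oplus S(R)$; hence both components are linear in $v$, which is exactly Proposition~\ref{prop-Damir} and gives square-affineness with no statement about $A\lambda$. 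Then $\calk=S^{-1}(V)\cap R=\{0\}$, so Proposition~\ref{prop-const-mod} makes the slope of $t\mapsto\Phi^2(g+t\lambda)$ a $g$-independent constant $\rho^2$, which yields (\ref{Psi-zero}), and your derivation of (\ref{Riccati-pre}) then goes through. A secondary overclaim: admissibility is \emph{not} automatic in this one-dimensional cone setting --- parallelism of $\sigma$ at boundary points forces $\Phi(g)=0$ for all $g\in\partial\fri$, i.e.\ $\Psi\equiv 0$, which is exactly the extra hypothesis of the subsequent proposition; the present statement deliberately asserts only affine, not admissible, state processes.
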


\begin{proof}
This is a consequence of Proposition~\ref{prop-Damir}, Remark~\ref{remark-add} and Proposition~\ref{prop-const-mod}.
\end{proof}

\begin{proposition}
Suppose that the HJMM equation (\ref{HJMM}) has an affine realization generated by $\frc$ with initial curves $\mathfrak{I}$. Then the following statements are equivalent:
\begin{enumerate}
\item[(i)] Then the HJMM equation (\ref{HJMM}) has an affine realization generated by $\frc$ with initial curves $\mathfrak{I}$ and with affine and admissible state processes.

\item[(ii)] In (\ref{Psi-zero}) we have $\Psi \equiv 0$, and we have $\Pi_C \frac{d}{dx} g \in \langle \lambda \rangle^+$ for all $g \in \partial \fri$.
\end{enumerate}
\end{proposition}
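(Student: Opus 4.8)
The plan is to specialize Theorem~\ref{thm-main-2} to the present one-dimensional situation and to combine it with the structural description furnished by the preceding proposition. First I would pin down the geometry: since $\dim V = 1$, $\dim \frc = 1$ and $\lambda \in \frc$, we have $V = C = \langle \lambda \rangle$, $U = \{ 0 \}$ and $\frc = \langle \lambda \rangle^+$, so that $\Pi_V = \Pi_C$. Because the HJMM equation already has an affine realization generated by $\frc$ with initial curves $\fri$, Assumption~\ref{ass-factor} holds (Proposition~\ref{prop-HJM-linear}); by Remark~\ref{remark-add} we have $\fri \subset \cald(d/dx)$ and condition (\ref{beta-inc-V}) for every $g \in \partial \fri$; and by the preceding proposition the representation (\ref{Psi-zero})--(\ref{Riccati-pre}) is available, with $\ell(\lambda) = 1$, $G \subset \ker(\ell)$ and $\rho > 0$. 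In particular the affineness of the state processes in (i) comes for free, so (i) is effectively a statement about admissibility.

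I would then invoke Theorem~\ref{thm-main-2}: statement (i) is equivalent to having, for every $g \in \partial \fri$, conditions (\ref{sigma-affine-parallel})--(\ref{cond-AR-3}). Three of these simplify away here. Condition (\ref{cond-AR-3}) is vacuous since $U = \{ 0 \}$. For $c \in \partial \frc$ one has $\langle c \rangle = C$, hence $(\frc + \langle c \rangle) \oplus U = V$, and $Ac + S \sigma_g^2(c) = \beta_g(c) \in V$ by Lemma~\ref{lemma-inc} together with (\ref{beta-inc-V}); thus (\ref{cond-AR-2}) holds automatically. Finally, for $g \in \partial \fri$ and $t \geq 0$ we have $g + t\lambda \in \fri$, and (\ref{Psi-zero}) gives $\Phi^2(g + t\lambda) = \rho^2 \big( \Psi(\Pi_G (g+t\lambda)) + \ell(g+t\lambda) \big) = \rho^2 \big( \Psi(g) + t \big)$, using $\Pi_G(g+t\lambda) = g$, $\ell(g) = 0$ and $\ell(\lambda) = 1$; hence $v \mapsto \sigma(g+v)$ is square-affine and the square-affinity half of (\ref{sigma-affine-parallel}) holds. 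So (i) is equivalent to: for every $g \in \partial \fri$ the map $v \mapsto \sigma(g+v)$ is parallel and (\ref{cond-AR-1}) holds.

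It remains to read off these two conditions. The only boundary point of the ray $\frc$ inside $C$ is its vertex $0$, so parallelism of $v \mapsto \sigma(g+v)$ reduces to $\langle \eta, \sigma(g) \rangle_V = 0$ for all $\eta \in \frc$; since $\frc$ spans $V$ and $\sigma(g) = \Phi(g)\lambda$, this is equivalent to $\Phi(g) = 0$, and, as $\rho > 0$ and $\Phi(g) = \rho \sqrt{\Psi(g)}$, to $\Psi(g) = 0$. Under this, $\alpha_{\rm HJM}(g) = \Phi^2(g)\, \lambda \Lambda = 0$, so (\ref{cond-AR-1}), which reads $\Pi_V \big( \frac{d}{dx} g + \alpha_{\rm HJM}(g) \big) \in \frc$, becomes $\Pi_C \frac{d}{dx} g \in \langle \lambda \rangle^+$. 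Collecting these equivalences over all $g \in \partial \fri$ yields (i) $\Leftrightarrow$ (ii), where ``$\Psi \equiv 0$'' is read as $\Psi(g) = 0$ for every $g \in \partial \fri$, equivalently that (\ref{Psi-zero}) may be taken with the zero function. I do not expect a real obstacle here: the parts needing a little care are checking that (\ref{cond-AR-2}), (\ref{cond-AR-3}) and the square-affinity are automatic in the one-dimensional cone case, and keeping the bookkeeping straight on the meaning of ``$\Psi \equiv 0$'' in terms of the values of $\Phi$ on $\fri$ (and, implicitly, that the standing regularity assumptions of Section~\ref{sec-affine-real} are in force, so that Theorem~\ref{thm-main-2} is applicable).
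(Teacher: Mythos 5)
Your proposal is correct and follows the paper's route exactly: the paper's own proof is the single line ``This is a consequence of Theorem~\ref{thm-main-2}'', and you have simply carried out the specialization of conditions (\ref{sigma-affine-parallel})--(\ref{cond-AR-3}) to the case $V = C = \langle \lambda \rangle$, $U = \{0\}$, using (\ref{Psi-zero}) from the preceding proposition. The reductions you flag as needing care (triviality of (\ref{cond-AR-2}) and (\ref{cond-AR-3}), automatic square-affinity, and parallelism collapsing to $\Phi(g)=0$, i.e.\ $\Psi(g)=0$, at the vertex of the ray) are all accurate.
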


\begin{proof}
This is a consequence of Theorem~\ref{thm-main-2}.
\end{proof}

\section{Examples of the HJMM equation and the maximal set of initial curves}\label{sec-HJMM-examples}

In this section, we present examples of the HJMM equation (\ref{HJMM}) with affine realizations and affine and admissible state processes, and for these examples we construct the maximal set of initial curves. Let $H$ be the state space presented in the previous Section \ref{sec-HJMM}. Throughout this section, we assume that the volatility $\sigma : H \to H$ is of the form
\begin{align}\label{lambda-ell}
\sigma(h) = \rho \sqrt{|\ell(h)|} \lambda
\end{align}
with a function $\lambda \in \cald(d/dx)$, a constant $\rho > 0$ and a continuous linear functional $\ell \in H^*$ such that $\ell(\lambda) = 1$. In our first example, let $\lambda$ be a solution of the Riccati equation (\ref{Riccati-pre}) for some constant $\gamma \in \bbr$.

\begin{remark}
The solution of the Riccati differential equation
\begin{align*}
\frac{d}{dx} \Lambda + \frac{\rho^2}{2} \Lambda^2 + \gamma \Lambda = 1, \quad \Lambda(0) = 0
\end{align*}
is given by
\begin{align}\label{sol-lambda-pre}
\Lambda(x) = \frac{2 (\exp(x \sqrt{\gamma^2 + \rho^2}) - 1)}{(\sqrt{\gamma^2 + 2 \rho^2} + \gamma) (\exp(x \sqrt{\gamma^2 + 2 \rho^2}) - 1) + 2 \sqrt{\gamma^2 + 2 \rho^2}}, \quad x \in \bbr_+,
\end{align}
see, for example, \cite[Sec.~7.4.1]{fillnm}. Therefore, the function
\begin{align}\label{sol-lambda}
\lambda = 1 - \frac{\rho^2}{2} \Lambda^2 - \gamma \Lambda,
\end{align}
where $\Lambda$ is given by (\ref{sol-lambda-pre}), is a solution to the ordinary differential equation (\ref{Riccati-pre}).
\end{remark}

\begin{proposition}\label{prop-CIR}
The HJMM equation (\ref{HJMM}) has an affine realization generated by $\frc = \langle \lambda \rangle^+$ with initial curves
\begin{align*}
\mathfrak{I} = \{ h \in \mathcal{D}(d/dx) : \ell(h) \geq 0 \text{ and } \ell(h') + (\rho^2 \ell(\lambda \Lambda) + \gamma) \ell(h) > 0 \}
\end{align*}
and with affine and admissible state processes, and the set of initial curves has the decomposition $\fri = \partial \fri \oplus \frc$, where the boundary is given by
\begin{align*}
\partial \mathfrak{I} = \{ h \in \mathcal{D}(d/dx) : \ell(h) = 0 \text{ and } \ell(h') > 0 \}.
\end{align*}
\end{proposition}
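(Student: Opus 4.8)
The plan is to apply Proposition~\ref{prop-suff} with the subspace $G := \ker(\ell)$, the cone $\frc := \langle \lambda \rangle^+$, no $U$-component (i.e.\ $U = \{0\}$, so $V = C = \langle \lambda \rangle$), and the linear operator $S$ from Proposition~\ref{prop-HJM-linear} associated to $A = d/dx$. First I would check that $H = G \oplus V$: since $\ell(\lambda) = 1 \neq 0$, the functional $\ell$ is nonzero, so $G = \ker(\ell)$ is a closed hyperplane, and $\lambda \notin G$, giving the direct sum decomposition with projections $\Pi_V h = \ell(h)\lambda$ and $\Pi_G h = h - \ell(h)\lambda$. By Proposition~\ref{prop-HJM-linear}, Assumption~\ref{ass-factor} holds (with $\cald(A) = \cald(d/dx)$), and by (\ref{lambda-ell}) we have $\sigma^2(h) = \rho^2 |\ell(h)|$ as an element of $L(V) \cong \bbr$ (identifying $L(\langle\lambda\rangle)$ with $\bbr$ via the normed basis $\lambda$).

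The main work is verifying the three hypotheses of Proposition~\ref{prop-suff}. For the square-affine and parallel condition (\ref{sigma-affine-parallel}): for $g \in G = \ker(\ell)$ and $v = t\lambda \in \frc$ we have $\ell(g + v) = t \geq 0$, so $\sigma^2(g + v) = \rho^2 t$, which is affine (indeed linear) in $v$; parallelism at the boundary point $v = 0$ holds because $\sigma^2(g) = 0$ there. For the constancy of $g \mapsto \sigma_g^2$ in (\ref{map-G-const}): since $\sigma_g^2(v) = \sigma^2(g+v) - \sigma^2(g) = \rho^2 t - 0 = \rho^2 t$ is independent of $g \in G$, this map is constant. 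For condition (\ref{Ac-new}) with $\partial \frc = \{\lambda\}$ (the single edge, up to positive scaling — here I take the normed generator $c = \lambda$): we compute, using $\sigma^2(\lambda) - \sigma^2(0) = \rho^2 \ell(\lambda) = \rho^2$ and $S(\rho^2 \cdot \mathrm{id}_V) = \rho^2 S(\mathrm{id}_V) = \rho^2 \lambda\Lambda$ (from the construction of $S$, since $\sigma^2 = \rho^2$ corresponds to the matrix $\rho^2$ and $S_\lambda(\rho^2) = \rho^2 \Lambda$), so that
\begin{align*}
A\lambda + S(\sigma^2(\lambda) - \sigma^2(0)) = \lambda' + \rho^2 \lambda\Lambda = -\gamma\lambda \in \langle \lambda \rangle = \frc \oplus \langle\lambda\rangle,
\end{align*}
where the penultimate equality is the Riccati relation (\ref{Riccati-pre}). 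Condition (\ref{cond-AR-3}) is vacuous since $U = \{0\}$. Thus Proposition~\ref{prop-suff} applies.

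It then remains to identify the sets (\ref{def-I}) and (\ref{def-I-boundary}) with those in the statement. We have $\Pi_V(Ag + S\sigma^2(g)) = \ell(g' + S\sigma^2(g))\lambda$; here $S\sigma^2(g) = \rho^2 \ell(g) \lambda\Lambda$, so $\ell(Ag + S\sigma^2(g)) = \ell(g') + \rho^2\ell(g)\ell(\lambda\Lambda)$. The condition $\Pi_V(Ag + S\sigma^2(g)) \in \mathrm{Int}\,\frc$ becomes $\ell(g') + \rho^2\ell(\lambda\Lambda)\ell(g) > 0$; together with $g \in G \cap \cald(d/dx)$, i.e.\ $\ell(g) = 0$ and $g \in \cald(d/dx)$, this gives exactly $\partial\fri = \{h \in \cald(d/dx) : \ell(h) = 0,\ \ell(h') > 0\}$. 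For the full set $\fri = \partial\fri \oplus \frc$: writing $h = g + t\lambda$ with $g = \Pi_G h$ and $t = \ell(h) \geq 0$, the defining condition $\Pi_V(A\Pi_G h + S\sigma^2(\Pi_G h)) \in \mathrm{Int}\,\frc$ reads $\ell((\Pi_G h)') + \rho^2\ell(\lambda\Lambda)\ell(\Pi_G h) > 0$; since $\Pi_G h = h - \ell(h)\lambda$ one has $\ell(\Pi_G h) = 0$ and $\ell((\Pi_G h)') = \ell(h') - \ell(h)\ell(\lambda')$, and using $\lambda' = -\rho^2\lambda\Lambda - \gamma\lambda$ from (\ref{Riccati-pre}) so that $\ell(\lambda') = -\rho^2\ell(\lambda\Lambda) - \gamma$, this simplifies to $\ell(h') + (\rho^2\ell(\lambda\Lambda) + \gamma)\ell(h) > 0$, matching the stated description of $\fri$ together with the constraint $\ell(h) \geq 0$ coming from $h \in G \oplus \frc$.

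\textbf{Main obstacle.} The only genuinely delicate point is the bookkeeping in step three: correctly propagating the identification $L(V) \cong \bbr$ and the explicit form of $S$ through the formulas (\ref{def-I})--(\ref{def-I-boundary}), and in particular using the Riccati identity (\ref{Riccati-pre}) twice — once to verify (\ref{Ac-new}) and once to re-express $\ell(\lambda')$ when passing from the $G$-intrinsic description to the description of $\fri$ in terms of $\ell(h)$ and $\ell(h')$ directly. Everything else is a routine verification of the hypotheses of Proposition~\ref{prop-suff}.
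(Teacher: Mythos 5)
Your proposal is correct and follows essentially the same route as the paper's own proof: setting $G = \ker(\ell)$, verifying the hypotheses of Proposition~\ref{prop-suff} (with the Riccati equation (\ref{Riccati-pre}) yielding (\ref{Ac-new})), and then translating (\ref{def-I})--(\ref{def-I-boundary}) into the stated conditions on $\ell(h)$ and $\ell(h')$ via $\ell(\lambda') = -\rho^2\ell(\lambda\Lambda)-\gamma$. The only cosmetic difference is that you carry out the set identification by expanding $\ell((\Pi_G h)')$ directly, whereas the paper computes $\Pi_C \frac{d}{dx}\Pi_G h$ in one display; the content is identical.
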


\begin{proof}
Setting $G := \ker(\ell)$, we have the direct sum decomposition $H = G \oplus C$, the corresponding projections are given by
\begin{align}\label{projections-1}
\Pi_G h = h - \ell(h) \lambda \quad \text{and} \quad \Pi_C h = \ell(h) \lambda \quad \text{for $h \in H$,}
\end{align}
and we have
\begin{align}\label{G-oplus-C}
G \oplus \frc = \{ h \in H : \ell(h) \geq 0 \}.
\end{align}
For each $g \in G$ we have (\ref{sigma-affine-parallel}) and the mapping (\ref{map-G-const}) is constant, and condition (\ref{Ac-new}) is satisfied due to the Riccati equation (\ref{Riccati-pre}). By (\ref{projections-1}) and the Riccati equation (\ref{Riccati-pre}), for each $h \in \cald(d/dx)$ we have
\begin{align*}
\Pi_C \frac{d}{dx} \Pi_G h &= \Pi_C \frac{d}{dx} ( h - \ell(h) \lambda ) = \Pi_C \big( h' + \ell(h) \big(  \rho^2 \lambda \Lambda + \gamma \lambda \big) \big)
\\ &= \big( \ell(h') + \ell(h) \big( \rho^2 \ell(\lambda \Lambda) + \gamma \ell(\lambda) \big) \big) \lambda = \big( \ell(h') + ( \rho^2 \ell(\lambda \Lambda) + \gamma ) \ell(h) \big) \lambda,
\end{align*}
and since $G = \ker(\ell)$, for each $g \in G$ we have
\begin{align*}
S \sigma^2(g) = \alpha_{\rm HJM}(g) = \rho^2 \ell(g) \lambda \Lambda = 0.
\end{align*}
Therefore, and taking into account (\ref{G-oplus-C}), applying Proposition~\ref{prop-suff} completes the proof.
\end{proof}

\begin{remark}\label{rem-CIR}
Let $h \in \fri$ be arbitrary. By Proposition~\ref{prop-CIR} there exist an interval $\bbt \in \bbj$, a parametrization $\psi \in C^1(\bbt;G)$, and an affine and admissible process $X$ with state space $\bbr_+$ such that the strong solution $r$ to the HJMM equation (\ref{HJMM}) with $r_0 = h$ is given by
\begin{align*}
r_t = \psi(t) + X_t \cdot \lambda, \quad t \in \bbt.
\end{align*}
Applying the functional $\ell$, this gives us
\begin{align*}
\ell(r_t) = \ell(\psi(t) + X_t \cdot \lambda) = X_t, \quad t \in \bbt,
\end{align*}
showing that $\ell(r) = X$. Therefore, $\ell(r)$ is an affine process with state space $\bbr_+$, which acts as state process of the realization.
\end{remark}

A popular choice for the linear functional $\ell \in H^*$ is the evaluation at the short end, that is $\ell(h) = h(0)$. Note that the condition $\ell(\lambda) = 1$ is fulfilled, because $\Lambda(0) = 0$ and we have the representation (\ref{sol-lambda}) of $\lambda$. We obtain the following result.

\begin{corollary}
The HJMM equation (\ref{HJMM}) has an affine realization generated by $\frc = \langle \lambda \rangle^+$ with initial curves
\begin{align}\label{initial-ex-1b}
\mathfrak{I} = \{ h \in \mathcal{D}(d/dx) : h(0) \geq 0 \text{ and } h'(0) + \gamma h(0) > 0 \}
\end{align}
and with affine and admissible state processes, and the set of initial curves has the decomposition $\fri = \partial \fri \oplus \frc$, where the boundary is given by
\begin{align*}
\partial \mathfrak{I} = \{ h \in \mathcal{D}(d/dx) : h(0) = 0 \text{ and } h'(0) > 0 \}.
\end{align*}
\end{corollary}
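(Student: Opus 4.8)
The plan is to deduce the corollary from Proposition~\ref{prop-CIR} by specializing the linear functional $\ell$ to the evaluation at the short end, that is $\ell(h) = h(0)$. First I would verify that this $\ell$ is admissible for the setting of Proposition~\ref{prop-CIR}: it is a continuous linear functional on $H$ (indeed $|h(0)| \leq \|h\|_H$ by the definition of the norm on the state space), and the normalization $\ell(\lambda) = 1$ holds because $\lambda$ is given by the representation (\ref{sol-lambda}), so $\lambda(0) = 1 - \tfrac{\rho^2}{2} \Lambda(0)^2 - \gamma \Lambda(0) = 1$ since $\Lambda(0) = 0$. Thus all hypotheses of Proposition~\ref{prop-CIR} are in force with this choice of $\ell$.

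Next I would simply translate the description of $\fri$ and $\partial \fri$ from Proposition~\ref{prop-CIR} into the present notation. With $\ell(h) = h(0)$ we have $\ell(h') = h'(0)$, so the set of initial curves becomes
\begin{align*}
\fri = \{ h \in \cald(d/dx) : h(0) \geq 0 \text{ and } h'(0) + (\rho^2 \ell(\lambda \Lambda) + \gamma) h(0) > 0 \}.
\end{align*}
The only nontrivial point is the coefficient in front of $h(0)$. Here $\ell(\lambda \Lambda) = (\lambda \Lambda)(0) = \lambda(0) \Lambda(0) = 0$, again because $\Lambda(0) = 0$. Hence the coefficient reduces to $\gamma$, and the condition on the curves simplifies to $h(0) \geq 0$ and $h'(0) + \gamma h(0) > 0$, which is exactly (\ref{initial-ex-1b}). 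The same substitution applied to the formula for $\partial \fri$ in Proposition~\ref{prop-CIR} gives $\partial \fri = \{ h \in \cald(d/dx) : h(0) = 0 \text{ and } h'(0) > 0 \}$, and the decomposition $\fri = \partial \fri \oplus \frc$ is inherited directly from Proposition~\ref{prop-CIR}. The assertion about affine and admissible state processes is likewise inherited.

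There is essentially no obstacle here: the corollary is a pure specialization, and the only thing requiring a moment's care is the vanishing of the products $\Lambda(0)$ and $(\lambda\Lambda)(0)$ at the short end, which makes both the normalization $\ell(\lambda)=1$ and the simplification of the coefficient $\rho^2 \ell(\lambda\Lambda) + \gamma$ go through. I would therefore keep the proof to one or two sentences, pointing to Proposition~\ref{prop-CIR} and noting that $\Lambda(0) = 0$ kills the extra term.
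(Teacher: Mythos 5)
Your proposal is correct and follows exactly the paper's route: the paper also deduces the corollary from Proposition~\ref{prop-CIR} by taking $\ell(h)=h(0)$ and observing that $\ell(\lambda\Lambda)=0$ (and, in the preceding text, that $\ell(\lambda)=1$ via $\Lambda(0)=0$ and (\ref{sol-lambda})). Nothing is missing.
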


\begin{proof}
Noting that $\ell(\lambda \Lambda) = 0$, this is an immediate consequence of Proposition~\ref{prop-CIR}.
\end{proof}

\begin{remark}
Let $h \in \mathfrak{I}$ be arbitrary. According to Remark~\ref{rem-CIR} we can choose the short rate $r(0)$ as state process of the FDR, that is, the strong solution $r$ to the HJMM equation (\ref{HJMM}) with $r_0 = h$ is given by
\begin{align*}
r_t = \psi(t) + r_t(0) \cdot \lambda, \quad t \in \bbt
\end{align*}
for some time interval $\bbt \in \bbj$. In particular, we have $\mathbb{P}(r_t(0) \geq 0) = 1$ for all $t \in \mathbb{T}$. The expectation hypothesis (see, e.g., \cite[Lemma~7.2]{Filipovic}) implies that the initial curve $h$ satisfies
\begin{align*}
h(t) = \mathbb{E}_{\mathbb{P}^t}[r_t(0)] \geq 0 \quad \text{for all $t \in \mathbb{T}$,} 
\end{align*}
where $\mathbb{P}^t$ denotes the $t$-forward measure.
This is in accordance with the representation (\ref{initial-ex-1b}) of the set $\mathfrak{I}$ of initial curves, which shows that either $h(0) > 0$, or, otherwise, we have $h(0) = 0$ and $h'(0) > 0$.
\end{remark}

For our next example, we suppose that the function $\lambda$ in (\ref{lambda-ell}) is given by $\lambda(x) = e^{-\gamma x}$, $x \in \bbr_+$ for some constant $\gamma \in (0,\infty)$, and that the function $\ell$ in (\ref{lambda-ell}) satisfies $\ell(\lambda) = 1$ and $\ell(\lambda^2) = 0$. Then, according to Proposition~\ref{prop-qe}, the HJMM equation (\ref{HJMM}) cannot have an affine realization generated by some subspace with affine and admissible state processes. However, we will show that it admits an affine realization generated by a state space of the form $\frc \oplus U$ with $\dim \frc = 1$ and $\dim U = 1$, and with affine and admissible state processes.

\begin{proposition}
The HJMM equation (\ref{HJMM}) has an affine realization generated by $\mathfrak{C} \oplus U = \langle \lambda \rangle^+ \oplus \langle \lambda^2 \rangle$ with initial curves
\begin{align*}
\mathfrak{I} = \{ h \in \mathcal{D}(d/dx) : \ell(h) \geq 0 \text{ and } \ell(h' + \gamma \cdot h) > 0 \}
\end{align*}
and with affine and admissible state processes.
\end{proposition}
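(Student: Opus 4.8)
The plan is to apply Proposition~\ref{prop-suff}, whose setting (Assumption~\ref{ass-factor}, a state space $\frc\oplus U$, and a drift of the form $\alpha=S\sigma^2$) is available here: with $V=\langle\lambda\rangle\oplus\langle\lambda^2\rangle$, $\frc=\langle\lambda\rangle^+$ and $U=\langle\lambda^2\rangle$, Assumption~\ref{ass-factor-HJMM} is readily checked (one has $V\subset\cald(d/dx)$ since $\lambda$, and hence $\lambda^2=e^{-2\gamma\cdot}$, lies in $\cald(d/dx)$; $\sigma(H)\subset V$; and $\sigma^2(h)=|\ell(h)|\,\sigma^2(\lambda)$ depends Lipschitz continuously on $h$), so by Proposition~\ref{prop-HJM-linear} we have $\alpha_{\rm HJM}=S\sigma^2$ with $S\in L(L(V),H)$, ${\rm ran}(S)\subset\cald(d/dx)$, i.e. Assumption~\ref{ass-factor} holds. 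The crucial preliminary choice is the subspace $G$: I would let $G$ be any closed complement of $U$ inside the hyperplane $\ker\ell$, so that $H=\ker\ell\oplus\langle\lambda\rangle=G\oplus U\oplus\langle\lambda\rangle=G\oplus V$ with $G\subseteq\ker\ell$; then $\Pi_C h=\ell(h)\lambda$ and $\ell\circ\Pi_G=0$. What makes this choice work is that $\alpha_{\rm HJM}$ vanishes on $G$: since $\Lambda=\int_0^\bullet\lambda(\eta)\,d\eta=(\lambda-\lambda^2)/\gamma$, we get $\alpha_{\rm HJM}(h)=\rho^2|\ell(h)|\,\lambda\Lambda=(\rho^2|\ell(h)|/\gamma)(\lambda-\lambda^2)$, which is $0$ whenever $\ell(h)=0$.

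Then I would verify the four hypotheses of Proposition~\ref{prop-suff} in turn. For $g\in G$ and $v=a\lambda+b\lambda^2\in\frc\oplus U$ (so $a\geq 0$) we have $\ell(g+v)=a\geq 0$, hence $\sigma(g+v)=\rho\sqrt a\,\lambda$ and $\sigma^2(g+v)=\ell(v)\,\sigma^2(\lambda)$, which is linear in $v$; this gives that the map in (\ref{sigma-affine-parallel}) is square-affine and, since $\sigma_g^2(v)=\ell(v)\,\sigma^2(\lambda)$ does not depend on $g$, that the map (\ref{map-G-const}) is constant. Moreover $\sigma(g+v)=\rho\sqrt a\,\lambda$ vanishes precisely when $a=0$, i.e. when $v\in U$; since $\langle\eta,v\rangle_V=0$ for $\eta\in\frc$ and $v\in\frc\oplus U$ forces (using that $C$ and $U$ are orthogonal under $\langle\cdot,\cdot\rangle_V$) $\eta=0$ or $v\in U$, the map (\ref{sigma-affine-parallel}) is parallel. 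For (\ref{Ac-new}) it suffices, by linearity in $c$ along the unique extreme ray, to take $c=\lambda$: then $A\lambda=\lambda'=-\gamma\lambda$ and $S(\sigma^2(\lambda)-\sigma^2(0))=\alpha_{\rm HJM}(\lambda)=(\rho^2/\gamma)(\lambda-\lambda^2)$, so $A\lambda+S(\sigma^2(\lambda)-\sigma^2(0))=(\rho^2/\gamma-\gamma)\lambda-(\rho^2/\gamma)\lambda^2\in C\oplus U$, while $\frc+\langle\lambda\rangle=C$; hence (\ref{Ac-new}) holds. Finally (\ref{cond-AR-3}) is immediate, as $A\lambda^2=(\lambda^2)'=-2\gamma\lambda^2\in U$.

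Proposition~\ref{prop-suff} now yields the desired affine realization with affine and admissible state processes, with set of initial curves given by (\ref{def-I}) and boundary by (\ref{def-I-boundary}). It remains to rewrite (\ref{def-I}) in the stated form. Since $G\subseteq\ker\ell$, a curve $h\in\cald(d/dx)$ lies in $(G\cap\cald(d/dx))\oplus\frc\oplus U$ iff $\ell(h)\geq 0$ (because $\Pi_C h=\ell(h)\lambda$), and because $\ell(\Pi_G h)=0$ we have $S\sigma^2(\Pi_G h)=0$, so the defining condition of (\ref{def-I}) reduces to $\Pi_V((\Pi_G h)')\in{\rm Int}\,\frc\oplus U$, i.e. $\ell((\Pi_G h)')>0$. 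Writing $\Pi_G h=h-\ell(h)\lambda-\Pi_U h$, differentiating, and using $\lambda'=-\gamma\lambda$ and $(\Pi_U h)'\in U\subseteq\ker\ell$, one obtains $\ell((\Pi_G h)')=\ell(h')+\gamma\ell(h)=\ell(h'+\gamma h)$, so (\ref{def-I}) becomes exactly $\fri=\{h\in\cald(d/dx):\ell(h)\geq 0\text{ and }\ell(h'+\gamma h)>0\}$ and (\ref{def-I-boundary}) becomes $\partial\fri=\{g\in G\cap\cald(d/dx):\ell(g')>0\}$. I do not expect a real obstacle here; the points needing care are the choice of $G$ (so that $\ell$ and $\alpha_{\rm HJM}$ interact cleanly, making $\alpha_{\rm HJM}$ vanish on $G$), the use of the canonical inner product $\langle\cdot,\cdot\rangle_V$ rather than $\langle\cdot,\cdot\rangle_H$ for the parallel condition, and correctly carrying the $\Pi_U h$ term through the differentiation when simplifying (\ref{def-I}).
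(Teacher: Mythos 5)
Your proposal is correct and follows essentially the same route as the paper: verify the hypotheses of Proposition~\ref{prop-suff} for a complement $G$ of $V$ contained in $\ker(\ell)$ (the paper takes the specific choice $G=\ker(\ell)\cap\langle\lambda^2\rangle^{\perp}$, using Remark~\ref{remark-orth-basis}), check (\ref{sigma-affine-parallel}), the constancy of (\ref{map-G-const}), (\ref{Ac-new}) via the identity $\lambda\Lambda=(\lambda-\lambda^2)/\gamma$, and (\ref{cond-AR-3}), and then simplify (\ref{def-I}) using $S\sigma^2|_G=0$ and $\ell\bigl((\Pi_G h)'\bigr)=\ell(h'+\gamma h)$. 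Your shortcut of applying $\ell$ directly instead of computing the full projection $\Pi_V\frac{d}{dx}\Pi_G h$ is a harmless simplification, and correctly makes the answer independent of the particular complement chosen.
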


\begin{proof}
As noted in Remark~\ref{remark-orth-basis}, we may assume that $(\lambda,\lambda^2)$ is an orthonormal basis of $V$. Setting $G := {\rm ker}(\ell) \cap \langle \lambda^2 \rangle^{\perp}$, we have the direct sum decomposition $H = G \oplus V$, the corresponding projections are given by
\begin{align}\label{projections-2}
\Pi_G h = h - \ell(h) \lambda - \langle h,\lambda^2 \rangle_H \lambda^2 \quad \text{and} \quad \Pi_V h = \ell(h) \lambda + \langle h,\lambda^2 \rangle_H \lambda^2,
\end{align}
and we have
\begin{align}\label{G-oplus-C-U}
G \oplus \frc \oplus U = \{ h \in H : \ell(h) \geq 0 \}.
\end{align}
For each $g \in G$ we have (\ref{sigma-affine-parallel}) and the mapping (\ref{map-G-const}) is constant. Furthermore, we have
\begin{align*}
\frac{d}{dx} \lambda + S \sigma^2(\lambda) = \frac{d}{dx} \lambda + \alpha_{\rm HJM}(\lambda) = -\gamma \lambda + \rho^2 \lambda \Lambda = - \gamma \lambda + \rho^2 \frac{\lambda - \lambda^2}{\gamma} \in V,
\end{align*}
showing that condition (\ref{Ac-new}) is satisfied, and condition (\ref{cond-AR-3}) is fulfilled, because $\lambda^2(x) = e^{-2\gamma x}$, $x \in \bbr_+$. By (\ref{projections-2}), for each $h \in \cald(d/dx)$ we have
\begin{align*}
\Pi_V \frac{d}{dx} \Pi_G h &= \Pi_V \frac{d}{dx} ( h - \ell(h) \lambda - \langle h,\lambda^2 \rangle_H \lambda^2 )
= \Pi_V ( h' + \gamma \cdot \ell(h) \lambda + 2 \gamma \langle h,\lambda^2 \rangle_H \lambda^2 )
\\ &= \ell(h') \lambda + \langle h',\lambda^2 \rangle_H \lambda^2 + \gamma \cdot \ell(h) \lambda + 2 \gamma \langle h,\lambda^2 \rangle_H \lambda^2
\\ &= \ell(h' + \gamma \cdot h) \lambda + \langle h' + 2 \gamma \cdot h,\lambda^2 \rangle_H \lambda^2,
\end{align*}
and since $G \subset \ker(\ell)$, for each $g \in G$ we have
\begin{align*}
S \sigma^2(g) = \alpha_{\rm HJM}(g) = \rho^2 \ell(g) \lambda \Lambda = 0.
\end{align*}
Therefore, and taking into account (\ref{G-oplus-C-U}), applying Proposition~\ref{prop-suff} finishes the proof.
\end{proof}

\section{Linear SPDEs and examples from natural sciences}\label{sec-nat-sci}

In this section, we treat linear SPDEs
\begin{align}\label{SPDE-linear}
\left\{
\begin{array}{rcl}
dr_t & = & A r_t dt + \sigma(r_t) dW_t
\medskip
\\ r_0 & = & h_0
\end{array}
\right.
\end{align}
with continuous volatility $\sigma : H \to H^n$, and present some examples from natural sciences. The following two results essentially say that the linear SPDE (\ref{SPDE-linear}) admits an affine realization if and only if the volatility $\sigma$ is $A$-quasi-exponential; see, for example \cite[Thm.~5.6]{Tappe-affin-real} for a closely related result.

\begin{proposition}\label{prop-qe-pre-nec}
Suppose that the linear SPDE (\ref{SPDE-linear}) has an affine realization generated by some subspace with initial points $\cald(A)$. Then the volatility $\sigma$ is $A$-quasi-exponential.
\end{proposition}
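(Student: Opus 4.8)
The goal is to show that if the linear SPDE \eqref{SPDE-linear} has an affine realization generated by some subspace $V$ with initial points $\cald(A)$, then $\sigma$ is $A$-quasi-exponential, i.e.\ $\sigma_k(H) \subset \cald(A^\infty)$ for all $k$ and $\dim A_\sigma < \infty$. The natural strategy is to extract, from the existence of the realization, the two tangential conditions \eqref{tang-domain}--\eqref{tang-sigma} together with the inclusion \eqref{beta-inc-intro} for the foliations attached to arbitrary initial points, and then to leverage the fact that the initial set is all of $\cald(A)$ in order to force $\sigma$ to map into $V$ and $A$ to leave $V$ invariant.

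\textbf{Step 1: Localize and reduce to a single foliation.} Fix an arbitrary $h_0 \in \cald(A)$. By the definition of an affine realization generated by the subspace $V$, there exist an interval $\bbt \in \bbj$ and a foliation $(\calm_t)_{t \in \bbt}$ generated by $V$ (so $\frc = \{0\}$, $C = \{0\}$, $V = U$) with $h_0 \in \calm_0$, which is invariant for \eqref{SPDE-linear}. By Proposition~\ref{prop-inv-foli-pre}(1) the tangential conditions \eqref{tang-domain}--\eqref{tang-sigma} hold, so in particular $\bbm \subset \cald(A)$, $\sigma(\bbm) \subset V^n$, and, since $\alpha = 0$ here, $\beta = A$, whence \eqref{tang-beta} reads $A h \in T\calm_t = \tfrac{d}{dt}\psi(t) + V$ for all $h \in \calm_t$. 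Subtracting two such relations for $h, h' \in \calm_t$ (same $t$) gives $A(h-h') \in V$, i.e.\ $A v \in V$ for all $v \in V$ (by translating along the leaf), so $A(V) \subset V$.

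\textbf{Step 2: Use that every $h_0 \in \cald(A)$ is an initial point.} Each leaf $\calm_0 = \psi(0) + V$ contains its base point $\psi(0) \in G$, and $h_0 = \psi(0) + v_0$ for some $v_0 \in V$; hence $\psi(0) = h_0 - v_0 \in \cald(A)$, and $\sigma(h_0) = \sigma(\psi(0) + v_0) \in V^n$ because $\psi(0) + v_0 = h_0 \in \bbm$. As $h_0$ ranges over all of $\cald(A)$, this yields $\sigma(\cald(A)) \subset V^n$; by continuity of $\sigma$ and density of $\cald(A)$ in $H$, we get $\sigma(H) \subset V^n$, so each $\sigma_k$ maps into the finite dimensional space $V$. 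Since $A(V) \subset V$ and $V \subset \cald(A)$ (as $V = \calm_0 - \psi(0) \subset \cald(A) - \cald(A) \subset \cald(A)$, or directly from $\bbm \subset \cald(A)$), we have $V \subset \cald(A^\infty)$ and hence $\sigma_k(H) \subset V \subset \cald(A^\infty)$ for each $k$.

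\textbf{Step 3: Conclude finite dimensionality of $A_\sigma$.} By definition $A_\sigma = \sum_{k=1}^n \langle A^m \sigma_k(h) : m \in \bbn_0,\ h \in H\rangle$. Since $\sigma_k(h) \in V$ for all $h$ and $A(V) \subset V$, every generator $A^m \sigma_k(h)$ lies in $V$, so $A_\sigma \subset V$ and therefore $\dim A_\sigma \leq \dim V < \infty$. Together with Step 2 this shows $\sigma$ is $A$-quasi-exponential.

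\textbf{Main obstacle.} The delicate point is Step 2, passing from the inclusion $\sigma(\cald(A)) \subset V^n$ to $\sigma(H) \subset V^n$: this needs the continuity of $\sigma$ (given) together with the density of $\cald(A)$ in $H$ (which holds since $A$ generates a $C_0$-semigroup) and the closedness of the finite dimensional subspace $V^n$. A secondary subtlety is that a priori the subspace $V$ could depend on the chosen initial point $h_0$; one must argue that all these subspaces can be taken to be the \emph{same} $V$ (this is built into the definition ``affine realization generated by some subspace'', where the generating subspace is fixed in advance), or otherwise take $V$ to be the span of the (finitely many, by a separate argument) distinct subspaces arising. Everything else is routine linear algebra and the bookkeeping of the foliation decomposition.
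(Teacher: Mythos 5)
Your proof is correct and follows essentially the same route as the paper: the paper's one-line argument cites Remark~\ref{remark-add} (the necessity direction of the analogue of Theorem~\ref{thm-main-1}) to obtain $V \subset \cald(A)$, $A(V) \subset V$ and $\sigma(H) \subset V^n$, and you simply unpack that citation by appealing directly to Proposition~\ref{prop-inv-foli-pre} and making the density-plus-continuity step explicit. The conclusion $A_\sigma \subset V$, hence $\dim A_\sigma < \infty$ and $\sigma_k(H) \subset V \subset \cald(A^\infty)$, is identical.
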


\begin{proof}
There exists a finite dimensional subspace $U$ such that the linear SPDE (\ref{SPDE-linear}) has an affine realization generated by $U$ with initial points $\cald(A)$. By Remark~\ref{remark-add} we have $U \subset \cald(A)$ and $A(U) \subset U$. This yields $A_{\sigma} \subset U$, showing that $\sigma$ is $A$-quasi-exponential.
\end{proof}

\begin{proposition}\label{prop-qe-pre-suff}
Suppose that the volatility $\sigma$ is $A$-quasi-exponential and Lipschitz continuous. Then the linear SPDE (\ref{SPDE-linear}) has an affine realization generated by $A_{\sigma}$ with initial points $\cald(A)$.
\end{proposition}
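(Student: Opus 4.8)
The plan is to reduce the statement to the sufficiency direction of Theorem~\ref{thm-main-1} (or rather the analogue for affine realizations mentioned in Remark~\ref{remark-add}, since here we make no admissibility claim) applied to the linear SPDE with $\alpha \equiv 0$, so that $\beta = A$. First I would observe that the subspace $A_\sigma$ is $A$-invariant: by definition it is spanned by the vectors $A^m \sigma_k(h)$ for $m \in \bbn_0$, $h \in H$, $k = 1,\dots,n$, and applying $A$ to such a generator gives $A^{m+1}\sigma_k(h)$, which is again a generator; finiteness of $\dim A_\sigma$ (part of $A$-quasi-exponentiality) then guarantees $A(A_\sigma) \subset A_\sigma$. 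In the notation of the earlier sections I would therefore take $V := A_\sigma$, which is a finite dimensional subspace with $V \subset \cald(A^\infty) \subset \cald(A)$, and I would pick any closed complement $G$ with $H = G \oplus V$. Since $A_\sigma$ already contains $\sigma_k(H)$ for each $k$ (take $m = 0$), we automatically have $\sigma(H) \subset V^n$.

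Next I would verify the conditions required by the affine-realization analogue of Theorem~\ref{thm-main-1}, with $\fri = \cald(A)$ and hence $\partial\fri = G \cap \cald(A)$. Because $\alpha \equiv 0$ we have $\beta = A$, so for $g \in \partial\fri$ the increment map is $\beta_g(v) = A(g+v) - Ag = Av$, which is linear; since $A(V) \subset V$ this gives $\beta_g(v) \in V$ for all $v \in V$, establishing condition~(\ref{beta-inc-V}), and moreover the map $v \mapsto \Pi_V \beta(g+v) = \Pi_V A g + Av$ is affine — indeed $\Pi_V A g$ is a constant in $V$ and $v \mapsto Av$ is linear on $V$ — so the (weakened, non-admissible) version of~(\ref{beta-affine-inward}) holds. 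For the volatility, $\sigma$ is assumed Lipschitz continuous, and here (since the state space is a linear space $V$, not a genuine cone) the weakened version of~(\ref{sigma-affine-parallel}) only requires that $v \mapsto \sigma(g+v)$ be square-affine; but $\sigma_k(g+v) \in V$ and in fact the relevant structural requirement for a subspace realization is just $\sigma(\overline\fri) = \sigma(H) \subset V^n$, which we have. Together with $\fri = \cald(A) \subset \cald(A)$ trivially, Assumption~\ref{ass-open} is satisfied because $\partial\fri = G \cap \cald(A)$ is (trivially) open in itself, and Assumption~\ref{ass-local-Lipschitz} holds with $\alpha \equiv 0$.

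I would then invoke the implication "(ii) $\Rightarrow$ (i)" of the affine-realization version of Theorem~\ref{thm-main-1} (as stated in Remark~\ref{remark-add}, valid since $\sigma$ is Lipschitz) to conclude that the linear SPDE~(\ref{SPDE-linear}) has an affine realization generated by $V = A_\sigma$ with initial points $\cald(A)$. The main obstacle I anticipate is bookkeeping rather than conceptual: one must be careful that the precise hypotheses of the quoted theorem (the Lipschitz continuity, the openness assumption, $\sigma(\overline\fri) \subset V^n$, and the affine/square-affine conditions in the \emph{subspace} case where no admissibility or inward-pointing is demanded) all match the situation at hand, and in particular that restricting attention to a subspace $V$ rather than a cone $\frc \oplus U$ genuinely weakens~(\ref{beta-affine-inward}) and~(\ref{sigma-affine-parallel}) in the way Remark~\ref{remark-add} asserts. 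Once that alignment is checked, the proof is essentially the two short computations $\beta_g(v) = Av \in V$ and $\Pi_V\beta(g+v) = \Pi_V Ag + Av$ together with the $A$-invariance of $A_\sigma$.
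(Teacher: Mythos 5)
Your proposal is correct and follows essentially the same route as the paper: the paper's proof likewise sets the generating subspace equal to $A_{\sigma}$, records that $A_{\sigma} \subset \cald(A)$ and $A(A_{\sigma}) \subset A_{\sigma}$, and then appeals to the affine-realization version of Theorem~\ref{thm-main-1} via Remark~\ref{remark-add}. You merely spell out in more detail the verifications (openness of $\partial\fri = G \cap \cald(A)$, $\sigma(H) \subset A_{\sigma}^n$, $\beta_g(v) = Av \in A_{\sigma}$) that the paper leaves implicit.
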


\begin{proof}
Setting $U := A_{\sigma}$ we have $U \subset \cald(A)$ and $A(U) \subset U$. Thus, by Remark~\ref{remark-add} the linear SPDE (\ref{SPDE-linear}) has an affine realization generated by $A_{\sigma}$ with initial points $\cald(A)$.
\end{proof}

Now, we characterize when the linear SPDE (\ref{SPDE-linear}) has an affine realization with affine and admissible state processes.

\begin{proposition}\label{prop-qe-lin}
The following statements are equivalent:
\begin{enumerate}
\item[(i)] The linear SPDE (\ref{SPDE-linear}) has an affine realization generated by some subspace with initial points $\cald(A)$ and with affine and admissible state processes.

\item[(ii)] The linear SPDE (\ref{SPDE-linear}) has an affine realization generated by some subspace with initial points $\cald(A)$ and with affine state processes.

\item[(iii)] The volatility $\sigma$ is $A$-quasi-exponential, and for each $h \in H$ the mapping
\begin{align*}
v \mapsto \sigma^2(h+v) : A_{\sigma} \to L(A_{\sigma},L(A_{\sigma}))
\end{align*}
is constant.
\end{enumerate}
If the previous conditions are fulfilled, then the linear SPDE (\ref{SPDE-linear}) has an affine realization generated by $A_{\sigma}$ with initial points $\cald(A)$ and with affine and admissible state processes.
\end{proposition}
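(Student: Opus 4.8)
The plan is to run the cycle (i) $\Rightarrow$ (ii) $\Rightarrow$ (iii) $\Rightarrow$ (i) and to read the concluding assertion off the last implication. The step (i) $\Rightarrow$ (ii) is immediate: a realization generated by a subspace has $\frc = \{0\}$, so the inward pointing and parallel conditions are vacuous, exactly as in the proof of Proposition~\ref{prop-qe}. Hence, for the final assertion, it suffices to ensure that the realization constructed in (iii) $\Rightarrow$ (i) is the one generated by $A_\sigma$.

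For (ii) $\Rightarrow$ (iii), I would first apply Proposition~\ref{prop-qe-pre-nec} (with $\alpha = 0$ in (\ref{SPDE-linear})) to obtain that $\sigma$ is $A$-quasi-exponential, so $A_\sigma$ is finite dimensional with $A_\sigma \subset \cald(A^\infty)$ and $A(A_\sigma) \subset A_\sigma$; as in that proof, the realization may be taken generated by a finite dimensional subspace $U \supseteq A_\sigma$ with $U \subset \cald(A)$, $A(U) \subset U$ and $H = G \oplus U$, where $G = \overline{\langle \partial\fri\rangle}$ and $\fri = \cald(A)$, so that $\partial\fri = G \cap \cald(A)$, which (as $U$ is finite dimensional) is dense in $G$. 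Each underlying foliation is affine by the very definition of an affine realization with affine state processes, so for every $g$ in its boundary the map $v \mapsto \sigma(g+v):U\to U^n$ is square-affine; as the starting point runs through $\cald(A)$ these boundary points exhaust $G \cap \cald(A)$, and continuity of $\sigma^2$ then gives that the restriction of $\sigma^2$ to every affine plane $g + U$, $g \in G$, is affine, hence (shifting) to every $h + U$, $h \in H$, and a fortiori to every $h + A_\sigma$. By Remark~\ref{remark-square-affine} this is condition (iii).

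For (iii) $\Rightarrow$ (i) one cannot simply quote Theorem~\ref{thm-main-2} or Proposition~\ref{prop-qe}, since Lipschitz continuity of $\sigma^2$ is not assumed here; instead I would build the realization by hand. Put $U := V := A_\sigma$ (trivial cone), pick a closed complement $G$ with $H = G \oplus U$, and note $\sigma(H) \subset A_\sigma^n = U^n$, $U \subset \cald(A)$, $A(U) \subset U$. Given $h_0 \in \cald(A)$, let $\phi(t) := T_t h_0$ be the $C_0$-semigroup orbit (so $\phi \in C^1(\bbr_+;H)$, $\phi(\bbr_+) \subset \cald(A)$, $\phi' = A\phi$), set $\psi(t) := \Pi_G\phi(t)$ and $\calm_t := \psi(t) \oplus U$ on $\bbt := \bbr_+$. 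A routine computation using $A(U)\subset U$ yields $\psi'(t) = \Pi_G A\psi(t)$ and hence the tangential conditions (\ref{tang-domain})--(\ref{tang-sigma}); moreover $v\mapsto \Pi_V\beta(g+v)$ is affine (automatic, since $\beta = A$) and $v\mapsto\sigma(g+v):U\to U^n$ is square-affine by (iii), while admissibility is vacuous, so the foliation is affine and admissible, and Proposition~\ref{prop-foliation-affine-inv} makes it invariant. Since $h_0\in\calm_0$, $h_0\in\cald(A)$ was arbitrary, and $\partial\fri = G\cap\cald(A)$ is dense in $G$ (so $G = \overline{\langle\partial\fri\rangle}$), this is an affine realization generated by $A_\sigma$ with initial points $\cald(A)$ and with affine and admissible state processes, giving (i) and the final claim at once.

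The main obstacle is the upgrade step in (ii) $\Rightarrow$ (iii): one must argue precisely that the boundaries of the foliations obtained from all starting points in $\cald(A)$ jointly sweep out a dense subset of $G$, and then pass to the closure using continuity of $\sigma^2$ and closedness of the finite dimensional targets $U^n$ and $L(A_\sigma)$; stating this so that the various foliations fit together cleanly is the delicate part. A minor point to be dealt with separately is the degenerate case $A_\sigma = \{0\}$ (i.e.\ $\sigma \equiv 0$), which lies outside the standing hypothesis $\dim V \geq 1$ and should be excluded or treated directly.
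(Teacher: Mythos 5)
Your proof is correct, and at its core it follows the same route as the paper, whose entire proof is the citation ``This follows from Propositions~\ref{prop-qe} and \ref{prop-qe-pre-nec}'': quasi-exponentiality is forced by Proposition~\ref{prop-qe-pre-nec}, the constancy of $\sigma^2$ along $A_\sigma$-planes comes from square-affineness plus Remark~\ref{remark-square-affine}, and the converse produces a realization generated by $A_\sigma$. Where you genuinely deviate is in (iii) $\Rightarrow$ (i): the paper routes this through Proposition~\ref{prop-qe}, hence through Theorem~\ref{thm-main-2} and Assumption~\ref{ass-factor}, which includes Lipschitz continuity of $\sigma^2$ -- a hypothesis that Section~\ref{sec-nat-sci} does not state (it only assumes $\sigma$ continuous). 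Your hand-built foliation along the semigroup orbit $\phi(t)=T_t h_0$ exploits $\alpha=0$ to bypass Pazy's theorem and the Lipschitz condition entirely, so your argument is more self-contained and actually closes a small gap in the hypotheses of the paper's chain of citations; the price is that you must re-verify the tangential conditions and invoke Proposition~\ref{prop-foliation-affine-inv} directly, which you do correctly. Your unpacking of (ii) $\Rightarrow$ (iii) (boundary points $\Pi_G h_0$ sweeping out the dense set $G\cap\cald(A)$, then passing to the closure by continuity of $\sigma^2$) is also sound and is essentially what is hidden inside the necessity direction of Theorem~\ref{thm-main-2} as invoked in Proposition~\ref{prop-qe}. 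The degenerate case $A_\sigma=\{0\}$ that you flag is likewise glossed over by the paper (the standing assumption $\dim V\geq 1$ excludes it), so mentioning it is reasonable but not a defect of your argument.
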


\begin{proof}
This follows from Propositions~\ref{prop-qe} and \ref{prop-qe-pre-nec}.
\end{proof}

\begin{corollary}\label{cor-U-2-lin}
Suppose that conditions (\ref{U-2-cond-1}) and (\ref{U-2-cond-2}) are fulfilled. Then the following statements are equivalent:
\begin{enumerate}
\item[(i)] The linear SPDE (\ref{SPDE-linear}) has an affine realization generated by some subspace with initial points $\cald(A)$ and with affine and admissible state processes.

\item[(ii)] The linear SPDE (\ref{SPDE-linear}) has an affine realization generated by some subspace with initial points $\cald(A)$ and with affine state processes.

\item[(iii)] The volatility $\sigma$ is $A$-quasi-exponential, and for each $h \in H$ the mapping 
\begin{align*}
v \mapsto \sigma(h + v) : A_{\sigma} \to A_{\sigma}^n 
\end{align*}
is constant.
\end{enumerate}
If the previous conditions are fulfilled, then the linear SPDE (\ref{SPDE-linear}) has an affine realization generated by $A_{\sigma}$ with initial points $\cald(A)$ and with affine and admissible state processes.
\end{corollary}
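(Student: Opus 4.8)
The statement is Corollary~\ref{cor-U-2-lin}, which should follow by combining Proposition~\ref{prop-qe-lin} with an appropriate ``scalar'' version of the characterization of constancy of the volatility. The key observation is that Proposition~\ref{prop-qe-lin} already gives the equivalence of (i) and (ii), and reduces both to the condition that $\sigma$ is $A$-quasi-exponential together with constancy of the map
\begin{align*}
v \mapsto \sigma^2(h+v) : A_{\sigma} \to L(A_{\sigma},L(A_{\sigma}))
\end{align*}
for each $h \in H$. Hence the whole content of the corollary is to show that, under the extra hypotheses \eqref{U-2-cond-1} and \eqref{U-2-cond-2}, this constancy of the squared volatility is equivalent to constancy of $\sigma$ itself as a map $A_{\sigma} \to A_{\sigma}^n$.

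\textbf{Key steps.}
First I would invoke Proposition~\ref{prop-qe-lin} verbatim to reduce (i) $\Leftrightarrow$ (ii) and to rephrase condition (iii) of that proposition in terms of $\sigma^2$. Second, I would apply the auxiliary result referenced in the proof of Corollary~\ref{cor-U-2} (namely Proposition~\ref{prop-sigma-constant}, which must live in Appendix~\ref{app-affine}): this is the statement that, under \eqref{U-2-cond-1} and \eqref{U-2-cond-2}, a square-affine-type map $v \mapsto \sigma(h+v)$ has constant $\sigma^2$ if and only if it has constant $\sigma$. The point of \eqref{U-2-cond-1} is that each component volatility $\sigma_k$ spans at most a one-dimensional subspace, so $\sigma_k(h+v) = \phi_k(v)\mu_k$ for a fixed direction $\mu_k$ and scalar $\phi_k$; then $\sigma_k^2 = \sigma_k\sigma_k^*$ constant forces $\phi_k^2$ constant, and \eqref{U-2-cond-2} (the directions $\mu_k$ being linearly independent across $k$) prevents cancellation between different components in $\sigma^2 = \sum_k \sigma_k\sigma_k^*$, so the scalar $\phi_k^2$ being constant can actually be read off; constancy of $\phi_k^2 \geq 0$ gives constancy of $|\phi_k|$, and continuity upgrades this to constancy of $\phi_k$, hence of $\sigma_k$. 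Third, I would assemble: (iii) of the corollary $\Leftrightarrow$ (iii) of Proposition~\ref{prop-qe-lin} $\Leftrightarrow$ (i)/(ii). Finally, the last sentence (``the SPDE has an affine realization generated by $A_{\sigma}$ \ldots'') is inherited directly from the corresponding concluding sentence of Proposition~\ref{prop-qe-lin}.

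\textbf{Main obstacle.}
The only genuinely nontrivial point is the passage from constancy of $\sigma^2$ to constancy of $\sigma$, and this is already packaged in the cited appendix result (Proposition~\ref{prop-sigma-constant}); so in the main text the proof is essentially a one-line citation, exactly as in Corollary~\ref{cor-U-2}. If one had to reprove it here, the subtlety is the sign ambiguity $|\phi_k|$ versus $\phi_k$, which is resolved by continuity of $\sigma$ on the finite-dimensional (hence path-connected) space $A_{\sigma}$: a continuous function whose square is constant and which is real-valued cannot change sign without hitting zero, and on a connected set this forces it to be globally of one sign, hence constant. I expect no other difficulty, since quasi-exponentiality and the invariance $A(A_{\sigma}) \subset A_{\sigma}$ are handled entirely by Propositions~\ref{prop-qe} and \ref{prop-qe-pre-nec} through Proposition~\ref{prop-qe-lin}.

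\begin{proof}
This follows from Propositions~\ref{prop-qe-lin} and \ref{prop-sigma-constant}. Indeed, by Proposition~\ref{prop-qe-lin} the statements (i) and (ii) are equivalent, and each is equivalent to the condition that $\sigma$ is $A$-quasi-exponential and, for each $h \in H$, the mapping $v \mapsto \sigma^2(h+v) : A_{\sigma} \to L(A_{\sigma},L(A_{\sigma}))$ is constant. Under the additional hypotheses \eqref{U-2-cond-1} and \eqref{U-2-cond-2}, Proposition~\ref{prop-sigma-constant} shows that, for each $h \in H$, the latter constancy is equivalent to the constancy of the mapping $v \mapsto \sigma(h+v) : A_{\sigma} \to A_{\sigma}^n$. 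This establishes the equivalence of (i), (ii) and (iii), and the concluding statement is inherited from Proposition~\ref{prop-qe-lin}.
\end{proof}
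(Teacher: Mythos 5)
Your proposal is correct and matches the paper's own proof, which likewise derives the corollary as an immediate consequence of Propositions~\ref{prop-qe-lin} and \ref{prop-sigma-constant}. The extra discussion of how Proposition~\ref{prop-sigma-constant} works internally is accurate but not needed, since that result is indeed already packaged in the appendix.
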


\begin{proof}
This is an immediate consequence of Propositions~\ref{prop-qe-lin} and \ref{prop-sigma-constant}.
\end{proof}

Here are some examples of SPDEs arising from natural sciences. For what follows, $\Delta$ denotes the Laplace operator.

\begin{example}
We consider the stochastic quantization of the free Euclidean quantum field (cf. \cite[Ex.~1.0.1]{Prevot-Roeckner})
\begin{align}\label{SPDE-quantum}
\left\{
\begin{array}{rcl}
dX_t & = & (\Delta - m^2) X_t dt + \sigma dW_t
\medskip
\\ X_0 & = & h_0,
\end{array}
\right.
\end{align}
where $m \in \bbr_+$ denotes ``mass'', and the volatility $\sigma \in H^n$ is constant. According to Proposition~\ref{prop-qe-lin}, the following statements are equivalent:
\begin{enumerate}
\item[(i)] The linear SPDE (\ref{SPDE-quantum}) has an affine realization generated by some subspace with initial points $\cald(\Delta)$ and with affine and admissible state processes.

\item[(ii)] The volatility $\sigma$ is $\Delta$-quasi-exponential.
\end{enumerate}
\end{example}

\begin{example}
We consider the stochastic cable equation (cf. \cite[Ex.~0.8]{Da_Prato})
\begin{align}\label{SPDE-cable}
\left\{
\begin{array}{rcl}
dV_t & = & \frac{1}{\tau} (\lambda^2 \Delta V_t - V_t) dt + \sigma dW_t
\medskip
\\ V_0 & = & h_0,
\end{array}
\right.
\end{align}
where $\lambda > 0$ denotes the length constant, $\tau > 0$ denotes the time constant of the electric cable, and the volatility $\sigma \in H^n$ is constant. According to Proposition~\ref{prop-qe-lin}, the following statements are equivalent:
\begin{enumerate}
\item[(i)] The linear SPDE (\ref{SPDE-cable}) has an affine realization generated by some subspace with initial points $\cald(\Delta)$ and with affine and admissible state processes.

\item[(ii)] The volatility $\sigma$ is $\Delta$-quasi-exponential.
\end{enumerate}
\end{example}

\begin{appendix}

\section{Convex cones and affine mappings}\label{app-affine}
 
The goal of this appendix is to provide the crucial results about convex cones and affine mappings, which we require for this paper. Throughout this section, let $H$ be a Hilbert space. Let $\frc$ be a finite dimensional proper convex cone, that is
\begin{align*}
\mathfrak{C} = \langle \lambda_1,\ldots,\lambda_m \rangle^+ := \bigg\{ \sum_{i=1}^m \alpha_i \lambda_i : \alpha_1,\ldots,\alpha_m \geq 0 \bigg\}
\end{align*}
with linearly independent $\lambda_1,\ldots,\lambda_m \in H$ for some $m \in \bbn_0$. We call $(\lambda_1,\ldots,\lambda_m)$ a basis of $\mathfrak{C}$. The basis $(\lambda_1,\ldots,\lambda_m)$ is called a normed basis, if $\| \lambda_i \|_H = 1$ for all $i=1,\ldots,m$. 

\begin{lemma}\label{lemma-unique-vectors}
Let $\lambda = (\lambda_1,\ldots,\lambda_m)$ and $\mu = (\mu_1,\ldots,\mu_m)$ be two bases of $\mathfrak{C}$. Then the following statements are true:
\begin{enumerate}
\item We have
\begin{align}\label{edges-equal}
\langle \lambda_1 \rangle^+ \cup \ldots \cup \langle \lambda_m \rangle^+ = \langle \mu_1 \rangle^+ \cup \ldots \cup \langle \mu_m \rangle^+.
\end{align}

\item Suppose the two bases $\lambda$ and $\mu$ are normed. Let $\alpha_i,\beta_i,\gamma_i,\delta_i \in \bbr$, $i=1,\ldots,m$ be such that
\begin{align}\label{inner-prep}
\sum_{i=1}^m \alpha_i \lambda_i = \sum_{i=1}^m \gamma_i \mu_i \quad \text{and} \quad \sum_{i=1}^m \beta_i \lambda_i = \sum_{i=1}^m \delta_i \mu_i.
\end{align}
Then we have
\begin{align}\label{inner-equal}
\sum_{i=1}^m \alpha_i \beta_i = \sum_{i=1}^m \gamma_i \delta_i.
\end{align}
\end{enumerate}
\end{lemma}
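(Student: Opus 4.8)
The plan is to prove both statements by exploiting the fact that the edges of a proper convex cone are intrinsic to the cone, independent of the chosen basis. For the first statement, the key observation is that the rays $\langle \lambda_i \rangle^+$ are exactly the extreme rays of $\frc$, i.e. those rays $\langle v \rangle^+ \subset \frc$ with the property that whenever $v = v_1 + v_2$ with $v_1, v_2 \in \frc$, then $v_1, v_2 \in \langle v \rangle^+$. Since $\lambda_1, \ldots, \lambda_m$ are linearly independent and nonnegative combinations of them span $\frc$, each $\langle \lambda_i \rangle^+$ is an extreme ray (if $\lambda_i = \sum_j \alpha_j \lambda_j + \sum_j \beta_j \lambda_j$ is a decomposition into two cone elements, comparing coordinates in the basis $\lambda$ forces both summands to be multiples of $\lambda_i$), and conversely any extreme ray of $\frc$ must be generated by some $\lambda_i$ (write a generator in the basis $\lambda$; if two coordinates are positive, split the vector accordingly). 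Since the set of extreme rays is a basis-free notion, it follows that the union of the rays $\langle \lambda_i \rangle^+$ equals the union of the rays $\langle \mu_i \rangle^+$, which is (\ref{edges-equal}).

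For the second statement, I would first use part (i): since the normed bases $\lambda$ and $\mu$ generate the same set of extreme rays, there is a permutation $\pi$ of $\{1,\ldots,m\}$ and positive scalars $c_i > 0$ with $\mu_i = c_i \lambda_{\pi(i)}$; because both bases are normed, $\| \mu_i \|_H = c_i \| \lambda_{\pi(i)} \|_H = c_i$, so in fact $c_i = 1$ and $\mu_i = \lambda_{\pi(i)}$ for all $i$. Thus the two bases differ only by a reordering of the same $m$ unit vectors.

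Now given the relations (\ref{inner-prep}), substitute $\mu_i = \lambda_{\pi(i)}$: we get $\sum_i \alpha_i \lambda_i = \sum_i \gamma_i \lambda_{\pi(i)} = \sum_j \gamma_{\pi^{-1}(j)} \lambda_j$, and since $\lambda_1, \ldots, \lambda_m$ are linearly independent this forces $\alpha_j = \gamma_{\pi^{-1}(j)}$ for every $j$, i.e. $\gamma_i = \alpha_{\pi(i)}$. Likewise $\delta_i = \beta_{\pi(i)}$. Therefore
\begin{align*}
\sum_{i=1}^m \gamma_i \delta_i = \sum_{i=1}^m \alpha_{\pi(i)} \beta_{\pi(i)} = \sum_{j=1}^m \alpha_j \beta_j,
\end{align*}
which is precisely (\ref{inner-equal}).

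The main obstacle is the proof of part (i): one has to pin down the correct characterization of the edges (extreme rays) of $\frc$ in a way that is manifestly independent of the basis, and then verify carefully — by comparing coordinates in a fixed basis — both that each $\langle \lambda_i \rangle^+$ is extreme and that every extreme ray arises this way. Once part (i) is in hand, the reduction to a mere permutation of a common normed basis is immediate, and the coordinate identities in part (ii) follow from linear independence with no further difficulty.
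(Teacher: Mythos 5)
Your proof is correct, but it takes a genuinely different route to the key structural fact. The paper writes the change-of-basis matrix $M$ (with $\lambda_j = \sum_i M_{ij}\mu_i$), observes that $M$ is entrywise nonnegative because each $\lambda_j$ lies in the cone generated by $\mu$ (and, implicitly, that $M^{-1}$ is nonnegative for the symmetric reason), and then invokes the Berman--Plemmons result that a nonnegative matrix with nonnegative inverse is monomial, i.e.\ a positive diagonal matrix times a permutation matrix; this yields $\lambda_j = c_{\pi(j)}\mu_{\pi(j)}$ at once. You instead identify the rays $\langle \lambda_i \rangle^+$ intrinsically as the extreme rays of the simplicial cone $\mathfrak{C}$, which is manifestly basis-free, and your coordinate arguments for both inclusions are sound; linear independence then upgrades the equality of ray sets to a bijection, giving the same conclusion $\mu_i = c_i\lambda_{\pi(i)}$ with $c_i>0$. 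From that point on the two proofs coincide: normedness forces $c_i=1$, and the permutation invariance of $\sum_i \alpha_i\beta_i$ gives part (ii). Your version is self-contained and more geometric, at the cost of a slightly longer verification; the paper's is shorter but outsources the combinatorial core to a citation and leaves the nonnegativity of the inverse matrix tacit.
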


\begin{proof}
Let $M \in \bbr^{m \times m}$ be the matrix of the identity operator on the linear space $\langle \frc \rangle$ with respect to the bases $\lambda$ and $\mu$, that is, we have
\begin{align*}
\lambda_j = \sum_{i=1}^m M_{ij} \mu_i \quad \text{for all $j=1,\ldots,m$.}
\end{align*}
Then $M$ is nonnegative, that is $M_{ij} \geq 0$ for all $i,j = 1,\ldots,m$. Hence, according to \cite[Lemma~4.3, page~68]{Berman} there are $c_1,\ldots,c_m \in (0,\infty)$ and a permutation $\pi : \{ 1,\ldots,m \} \rightarrow \{ 1,\ldots,m \}$ such that
\begin{align*}
M = {\rm diag}(c_1,\ldots,c_m) \cdot \left( 
\begin{array}{ccc}
e_{\pi(1)} & \ldots & e_{\pi(m)}            
\end{array}
\right),
\end{align*}
where $e_1,\ldots,e_m \in \mathbb{R}^m$ denote the unit vectors in $\mathbb{R}^m$. Hence, we have
\begin{align*}
\lambda_j = c_{\pi(j)} \mu_{\pi(j)} \quad \text{for all $j=1,\ldots,m$,}
\end{align*}
which proves (\ref{edges-equal}). If the two bases $\lambda$ and $\mu$ are normed, then we even have
\begin{align*}
\lambda_j = \mu_{\pi(j)} \quad \text{for all $j=1,\ldots,m$.}
\end{align*}
Thus, if (\ref{inner-prep}) is fulfilled, then we have (\ref{inner-equal}).
\end{proof}

\begin{definition}\label{def-edges-C}
We introduce the following notions:
\begin{enumerate}
\item We define the \emph{edges of $\mathfrak{C}$} as
\begin{align*}
\partial \mathfrak{C} := \langle \lambda_1 \rangle^+ \cup \ldots \cup \langle \lambda_m \rangle^+,
\end{align*}
where $(\lambda_1,\ldots,\lambda_m)$ denotes a basis of $\mathfrak{C}$.

\item Let $c \in \partial \frc$ be arbitrary. If $c = 0$, then we define
\begin{align*}
\frc \ominus \langle c \rangle^+ := \frc,
\end{align*}
and otherwise, we define the new cone as
\begin{align*}
\frc \ominus \langle c \rangle^+ := \langle \lambda_i : i \in \{ 1,\ldots,m \} \setminus \{ j \} \rangle^+,
\end{align*}
where $(\lambda_1,\ldots,\lambda_m)$ denotes a basis of $\mathfrak{C}$ and $j \in \{ 1,\ldots,m \}$ is the unique index such that $c \in \langle \lambda_j \rangle^+$.
\end{enumerate}
\end{definition}

\begin{remark}
By virtue of Lemma~\ref{lemma-unique-vectors}, the definitions of the edges $\partial \mathfrak{C}$ and of the new cone $\frc \ominus \langle c \rangle^+$ do not depend on the choice of the basis.
\end{remark}

\begin{definition}
We define the inner product $\langle \cdot,\cdot \rangle_C$ as
\begin{align*}
\langle h,g \rangle_C := \sum_{i=1}^m \alpha_i \beta_i,
\end{align*}
where
\begin{align*}
h = \sum_{i=1}^m \alpha_i \lambda_i \quad \text{and} \quad g = \sum_{i=1}^m \beta_i \lambda_i,
\end{align*}
and $(\lambda_1,\ldots,\lambda_m)$ denotes a normed basis of $\mathfrak{C}$.
\end{definition}

\begin{remark}
By virtue of Lemma~\ref{lemma-unique-vectors}, the definition of the inner product $\langle \cdot,\cdot \rangle_C$ does not depend on the choice of the normed basis.
\end{remark}

Now, let $U \subset H$ be a finite dimensional subspace such that $C \cap U = \{ 0 \}$, where $C = \langle \frc \rangle$. We assume that the subspace $V = C \oplus U$ satisfies $\dim V \geq 1$.

\begin{definition}\label{def-inner-prod}
We define the inner product $\langle \cdot,\cdot \rangle_V$ as
\begin{align*}
\langle c_1 + u_1,c_2 + u_2 \rangle_V := \langle c_1,c_2 \rangle_C + \langle u_1,u_2 \rangle_H.
\end{align*}
\end{definition}

\begin{remark}
Note that $C = U^{\perp}$ and $U = C^{\perp}$, considered on the Hilbert space $(V,\langle \cdot,\cdot \rangle_V)$. 
\end{remark}

\begin{remark}\label{remark-orth-basis}
Let $\lambda = (\lambda,\ldots,\lambda_d)$ be a basis of $V$ such that $\frc = \langle \lambda,\ldots,\lambda_m \rangle^+$, where $m = \dim C$.
\begin{itemize}
\item There exists an inner product $( \cdot,\cdot )_H$ on $H$ such that $\langle \cdot,\cdot \rangle_H$ and $( \cdot,\cdot )_H$ generate equivalent norms on the Hilbert space $H$, the basis $\lambda$ is an orthonormal basis of $V$ with respect to $( \cdot,\cdot )_H$, and the inner product $\langle \cdot,\cdot \rangle_V$ constructed according to Definition~\ref{def-inner-prod} coincides with the restriction of $( \cdot,\cdot )_H$ to $V$. 

\item Consequently, we may assume, without loss of generality, that $\lambda$ is an orthonormal basis with respect to the original inner product $\langle \cdot,\cdot \rangle_H$, and that $\langle \cdot,\cdot \rangle_V$ coincides with the restriction of $\langle \cdot,\cdot \rangle_H$ to $V$.
\end{itemize}
\end{remark}

\begin{definition}\label{def-beta-adm}
A mapping $\beta : \mathfrak{C} \oplus U \rightarrow V$ is called \emph{inward pointing at boundary points of $\frc \oplus U$} (in short \emph{inward pointing}) if
\begin{align}\label{beta-inv}
\langle \beta(v),\eta \rangle_V \geq 0 \quad \text{for all $v \in \mathfrak{C} \oplus U$ and all $\eta \in \mathfrak{C}$ with $\langle v,\eta \rangle_V = 0$.}
\end{align}
\end{definition}

Now, let $\beta : \mathfrak{C} \oplus U \rightarrow V$ be an affine mapping. Then there are unique $\beta_1 \in V$ and $\beta_2 \in L(V)$ such that we have the decomposition
\begin{align}\label{nu-decomp}
\beta(v) = \beta_1 + \beta_2(v), \quad v \in \mathfrak{C} \oplus U.
\end{align}

\begin{proposition}\label{prop-char-affine-1}
The following statements are equivalent:
\begin{enumerate}
\item[(i)] $\beta$ is inward pointing.

\item[(ii)] We have
\begin{align}\label{nu-1}
\beta_1 &\in \mathfrak{C} \oplus U,
\\ \label{nu-2-C} \beta_2(c) &\in (\mathfrak{C} + \langle c \rangle) \oplus U, \quad c \in \partial \mathfrak{C},
\\ \label{nu-2-U} \beta_2(U) &\subset U.
\end{align}
\end{enumerate}
\end{proposition}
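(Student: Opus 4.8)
The plan is to argue entirely inside the finite–dimensional Hilbert space $(V,\langle\cdot,\cdot\rangle_V)$ and to exploit the special feature of this inner product: if $(\lambda_1,\dots,\lambda_m)$ is a normed basis of $\frc$, then it is an orthonormal basis of $C$ for $\langle\cdot,\cdot\rangle_V$, and $U=C^{\perp}$ in $V$. In these coordinates $\frc\oplus U$ is the canonical state space $\bbr_+^m\times\bbr^{\dim U}$, $\langle\cdot,\cdot\rangle_V$ is the standard inner product, and for $v=\sum_{i=1}^m\alpha_i\lambda_i+u$ with $\alpha_i\geq 0$ and $u\in U$ one has $\langle v,\lambda_i\rangle_V=\alpha_i$. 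I denote by $\Pi_C\colon V\to C$ and $\Pi_U\colon V\to U$ the orthogonal projections corresponding to $V=C\oplus U$.

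The first step is to reduce the inward pointing property to a condition tested only against the edge generators $\lambda_1,\dots,\lambda_m$: I claim $\beta$ is inward pointing if and only if for every $i\in\{1,\dots,m\}$ one has $\langle\beta(v),\lambda_i\rangle_V\geq 0$ for all $v\in\frc\oplus U$ with $\langle v,\lambda_i\rangle_V=0$. One direction is immediate by choosing $\eta=\lambda_i$. For the converse, take $v$ and $\eta=\sum_i\gamma_i\lambda_i\in\frc$ with $\langle v,\eta\rangle_V=0$; orthonormality turns this into $\sum_i\alpha_i\gamma_i=0$, so $\gamma_i>0$ forces $\alpha_i=0$, and therefore $\langle\beta(v),\eta\rangle_V=\sum_{i\colon\gamma_i>0}\gamma_i\langle\beta(v),\lambda_i\rangle_V\geq 0$.

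Next I would, for each fixed $i$, examine the affine function $v\mapsto\langle\beta(v),\lambda_i\rangle_V=\langle\beta_1,\lambda_i\rangle_V+\langle\beta_2(v),\lambda_i\rangle_V$ on the face $\{\,v\in\frc\oplus U:\langle v,\lambda_i\rangle_V=0\,\}=\{\sum_{j\neq i}\alpha_j\lambda_j+u:\alpha_j\geq 0,\ u\in U\}$; by linearity of $\beta_2$ it equals $\langle\beta_1,\lambda_i\rangle_V+\sum_{j\neq i}\alpha_j\langle\beta_2(\lambda_j),\lambda_i\rangle_V+\langle\beta_2(u),\lambda_i\rangle_V$. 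A routine convexity argument — letting $u$ range over the subspace $U$ to force the term linear in $u$ to vanish, then letting each $\alpha_j\to\infty$, then setting all free parameters to zero — shows that this function is nonnegative on the face if and only if $\langle\beta_1,\lambda_i\rangle_V\geq 0$, $\langle\beta_2(\lambda_j),\lambda_i\rangle_V\geq 0$ for all $j\neq i$, and $\langle\beta_2(u),\lambda_i\rangle_V=0$ for all $u\in U$.

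Finally I would aggregate these scalar conditions over $i=1,\dots,m$ and translate them back into coordinate–free form. The conditions $\langle\beta_1,\lambda_i\rangle_V\geq 0$ for all $i$ say exactly $\Pi_C\beta_1\in\frc$, i.e.\ (\ref{nu-1}); the conditions $\langle\beta_2(u),\lambda_i\rangle_V=0$ for all $i$ and all $u\in U$ say $\beta_2(U)\subset C^{\perp}=U$, i.e.\ (\ref{nu-2-U}); and for each fixed $j$ the conditions $\langle\beta_2(\lambda_j),\lambda_i\rangle_V\geq 0$ for all $i\neq j$ say precisely that $\Pi_C\beta_2(\lambda_j)\in\frc+\langle\lambda_j\rangle$ — the coefficient of $\lambda_j$ being left unconstrained — hence $\beta_2(\lambda_j)\in(\frc+\langle\lambda_j\rangle)\oplus U$, which by homogeneity of $\beta_2$ and the trivial case $c=0$ is exactly (\ref{nu-2-C}). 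I do not expect a genuine obstacle; the only delicate point is this last translation, where one must observe that testing only against the $\lambda_i$ with $i\neq j$ is what produces the $\langle c\rangle$ summand in (\ref{nu-2-C}), and one should check that the degenerate cases $m=0$ and $U=\{0\}$ cause no trouble, the corresponding conditions then being vacuous on both sides of the equivalence.
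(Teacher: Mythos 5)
Your proposal is correct and follows essentially the same route as the paper's proof: both reduce the inward-pointing inequality to tests along the edge generators of $\mathfrak{C}$ (exploiting that a normed basis of $\mathfrak{C}$ is $\langle\cdot,\cdot\rangle_V$-orthonormal), kill the $U$-part by letting $u$ range over the whole subspace, and use positive homogeneity in the cone directions to separate the constant term $\beta_1$ from the coefficients $\beta_2(\lambda_j)$. Your coordinate version of the final translation, including the observation that omitting the test against $\lambda_j$ itself is what produces the $\langle c\rangle$ summand in (\ref{nu-2-C}), matches the paper's argument.
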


\begin{proof}
(i) $\Rightarrow$ (ii): Since $\beta$ is inward pointing, for all $c \in \mathfrak{C}$, $u \in U$ and all $\eta \in \mathfrak{C}$ with $\langle c,\eta \rangle_V = 0$ we have
\begin{align}\label{test-affine-drift}
\langle \beta_1,\eta \rangle_V + \langle \beta_2(c),\eta \rangle_V + \langle \beta_2(u),\eta \rangle_V \geq 0.
\end{align}
Taking $c = u = 0$ in (\ref{test-affine-drift}), we have
\begin{align*}
\langle \beta_1,\eta \rangle_V \geq 0 \quad \text{for all $\eta \in \mathfrak{C}$,} 
\end{align*}
showing (\ref{nu-1}). Moreover, taking $c = 0$ in (\ref{test-affine-drift}) we have
\begin{align*}
\langle \beta_1,\eta \rangle_V + \langle \beta_2(u),\eta \rangle_V \geq 0 \quad \text{for all $u \in U$ and all $\eta \in \mathfrak{C}$.}
\end{align*}
This implies
\begin{align*}
\langle \beta_2(u),\eta \rangle_V = 0 \quad \text{for all $u \in U$ and all $\eta \in C$,}
\end{align*}
showing (\ref{nu-2-U}). Now, let $c \in \partial \mathfrak{C}$ be arbitrary. Taking $u=0$ in (\ref{test-affine-drift}) we obtain
\begin{align*}
\langle \beta_1 + \beta_2(c),\eta \rangle_V \geq 0
\end{align*}
for all $\eta \in \mathfrak{C}$ with $\langle c,\eta \rangle_V = 0$, and hence
\begin{align*}
\beta_1 + \beta_2(c) \in (\mathfrak{C} + \langle c \rangle) \oplus U.
\end{align*}
Since $\beta_1 \in \mathfrak{C} \oplus U$, this implies (\ref{nu-2-C}).

\noindent(ii) $\Rightarrow$ (i): Let $v \in \mathfrak{C} \oplus U$ and $\eta \in \mathfrak{C}$ with $\langle v,\eta \rangle_V = 0$ be arbitrary. There exist unique elements $c \in \mathfrak{C}$ and $u \in U$ such that $v = c + u$. Moreover, there exist linearly independent elements $c_1,\ldots,c_p \in \partial \mathfrak{C}$ for some $p \in \{ 1,\ldots,m \}$ such that $c = \sum_{i=1}^p c_i$ and $\langle c_i,\eta \rangle_V = 0$ for all $i=1,\ldots,p$.
Therefore, by the decomposition (\ref{nu-decomp}) and (\ref{nu-1})--(\ref{nu-2-U}) we obtain
\begin{align*}
\langle \beta(v),\eta \rangle_V = \langle \beta_1,\eta \rangle_V + \sum_{i=1}^p \langle \beta_2(c_i),\eta \rangle_V + \langle \beta_2(u),\eta \rangle_V \geq 0,
\end{align*}
showing that $\beta$ is inward pointing.
\end{proof}

In the sequel, we fix a positive integer $n \in \bbn$.

\begin{definition}\label{def-par-1}
A mapping $\sigma : \mathfrak{C} \oplus U \rightarrow V^n$ is called \emph{parallel to the boundary at boundary points of $\frc \oplus U$} (in short \emph{parallel}) if for each $k = 1,\ldots,n$ we have
\begin{align}\label{sigma-inv}
\langle \sigma_k(v),\eta \rangle_V = 0 \quad \text{for all $v \in \mathfrak{C} \oplus U$ and all $\eta \in \mathfrak{C}$ with $\langle v,\eta \rangle_V = 0$.}
\end{align}
\end{definition}

For what follows, we denote by $e = (e_1,\ldots,e_n)$ the standard basis of $\bbr^n$.

\begin{definition}
For $\sigma \in V^n$ we define $\hat{\sigma} \in L(\bbr^n,V)$ by $\hat{\sigma} e_k := \sigma_k$ for $k=1,\ldots,n$.
\end{definition}

Note that the mapping $\sigma \mapsto \hat{\sigma}$ is an isomorphism from $V^n$ to $L(\bbr^n,V)$. In the sequel, we denote by $S^+(V) \subset L(V)$ the convex cone of all symmetric, nonnegative linear operators from $V$ to $V$.

\begin{definition}
For $\sigma \in V^n$ we define $\sigma^2 \in S^+(V)$ as $\sigma^2 := \hat{\sigma} \hat{\sigma}^*$, where the adjoint operator is defined with respect to the standard inner product on $\bbr^n$ and the inner product $\langle \cdot,\cdot \rangle_V$ from Definition~\ref{def-inner-prod}.
\end{definition}

\begin{definition}\label{def-sigma-adm}
A mapping $\sigma : \mathfrak{C} \oplus U \rightarrow V^n$ is called \emph{square-affine} if $\sigma^2 : \mathfrak{C} \oplus U \rightarrow S^+(V)$ is affine.
\end{definition}

\begin{definition}\label{def-par-2}
A mapping $T : \mathfrak{C} \oplus U \rightarrow S^+(V)$ is called \emph{parallel to the boundary at boundary points of $\frc \oplus U$} (in short \emph{parallel}) if
\begin{align*}
\langle (Tv)\eta,\eta \rangle_V = 0 \quad \text{for all $v \in \mathfrak{C} \oplus U$ and all $\eta \in \mathfrak{C}$ with $\langle v,\eta \rangle_V = 0$.}
\end{align*}
\end{definition}

\begin{lemma}\label{lemma-parallel}
For all $\sigma \in V^n$ and all $\eta \in V$ the following statements are equivalent:
\begin{enumerate}
\item[(i)] We have $\langle \sigma_k,\eta \rangle_V = 0$ for all $k=1,\ldots,n$.

\item[(ii)] We have $\hat{\sigma}^* \eta = 0$.

\item[(iii)] We have $\langle \sigma^2 \eta, \eta \rangle_V = 0$.
\end{enumerate}
\end{lemma}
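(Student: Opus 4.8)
The plan is to reduce all three statements to a componentwise description of the vector $\hat{\sigma}^* \eta \in \bbr^n$. First I would compute, for $x = \sum_{k=1}^n x_k e_k \in \bbr^n$, that $\hat{\sigma} x = \sum_{k=1}^n x_k \sigma_k$ by definition of $\hat{\sigma}$, and hence, using the defining relation of the adjoint together with the standard inner product on $\bbr^n$ and $\langle \cdot,\cdot \rangle_V$ from Definition~\ref{def-inner-prod},
\begin{align*}
\langle x, \hat{\sigma}^* \eta \rangle_{\bbr^n} = \langle \hat{\sigma} x, \eta \rangle_V = \sum_{k=1}^n x_k \langle \sigma_k, \eta \rangle_V .
\end{align*}
Choosing $x = e_k$ shows that the $k$-th coordinate of $\hat{\sigma}^* \eta$ equals $\langle \sigma_k, \eta \rangle_V$. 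Since a vector in $\bbr^n$ vanishes if and only if all of its coordinates vanish, this immediately yields the equivalence (i) $\Leftrightarrow$ (ii).

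Next I would establish (ii) $\Leftrightarrow$ (iii) via the identity
\begin{align*}
\langle \sigma^2 \eta, \eta \rangle_V = \langle \hat{\sigma} \hat{\sigma}^* \eta, \eta \rangle_V = \langle \hat{\sigma}^* \eta, \hat{\sigma}^* \eta \rangle_{\bbr^n} = \| \hat{\sigma}^* \eta \|_{\bbr^n}^2 ,
\end{align*}
where the first equality is the definition of $\sigma^2 = \hat{\sigma} \hat{\sigma}^*$ and the second is again the adjoint relation. A real number of the form $\| \hat{\sigma}^* \eta \|_{\bbr^n}^2$ vanishes if and only if $\hat{\sigma}^* \eta = 0$, which is precisely the desired equivalence.

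There is essentially no serious obstacle here; the only point requiring a little care is to keep the two inner products apart when invoking the adjoint relation, namely the standard inner product on $\bbr^n$ on one side and $\langle \cdot,\cdot \rangle_V$ on the other, exactly as fixed in the definition of $\sigma^2$. Everything else is a direct unwinding of the definitions of $\hat{\sigma}$, $\hat{\sigma}^*$ and $\sigma^2$.
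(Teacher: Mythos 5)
Your proof is correct and follows essentially the same route as the paper: both identify the $k$-th coordinate of $\hat{\sigma}^*\eta$ with $\langle \sigma_k,\eta\rangle_V$ via the adjoint relation to get (i) $\Leftrightarrow$ (ii), and both use the identity $\langle \sigma^2\eta,\eta\rangle_V = \|\hat{\sigma}^*\eta\|_{\mathbb{R}^n}^2$ for (ii) $\Leftrightarrow$ (iii).
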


\begin{proof}
For all $k = 1,\ldots,n$ we have
\begin{align*}
\langle \sigma_k,\eta \rangle_V = \langle \hat{\sigma} e_k,\eta \rangle_V = \langle e_k, \hat{\sigma}^* \eta \rangle_{\mathbb{R}^n},
\end{align*}
which proves (i) $\Leftrightarrow$ (ii). Moreover, we have
\begin{align*}
\| \hat{\sigma}^* \eta \|_{\bbr^n}^2 = \langle \hat{\sigma}^* \eta,\hat{\sigma}^* \eta \rangle_{\bbr^n} = \langle \hat{\sigma} \hat{\sigma}^* \eta, \eta \rangle_V = \langle \sigma^2 \eta, \eta \rangle_V,
\end{align*}
proving (ii) $\Leftrightarrow$ (iii).
\end{proof}

\begin{corollary}
For a mapping $\sigma : \mathfrak{C} \oplus U \rightarrow V^n$ the following statements are equivalent:
\begin{enumerate}
\item[(i)] $\sigma$ is parallel in the sense of Definition~\ref{def-par-1}.

\item[(ii)] $\sigma^2$ is parallel in the sense of Definition~\ref{def-par-2}.
\end{enumerate}
\end{corollary}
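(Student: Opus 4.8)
The plan is to reduce the statement to a pointwise application of Lemma~\ref{lemma-parallel}. The key observation is that Definitions~\ref{def-par-1} and~\ref{def-par-2} range over exactly the same collection of pairs $(v,\eta)$, namely those with $v \in \mathfrak{C}\oplus U$, $\eta \in \mathfrak{C}$ and $\langle v,\eta\rangle_V = 0$; only the condition imposed for such a pair differs. Hence it suffices to show that, for each fixed admissible pair $(v,\eta)$, the requirement ``$\langle \sigma_k(v),\eta\rangle_V = 0$ for all $k=1,\ldots,n$'' is equivalent to the requirement ``$\langle \sigma^2(v)\eta,\eta\rangle_V = 0$''.

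First I would fix an arbitrary such pair $(v,\eta)$. Since $\eta \in \mathfrak{C} \subset C \subset V$, I may apply Lemma~\ref{lemma-parallel} to the element $\sigma(v) \in V^n$ and the vector $\eta \in V$. Recalling that $\sigma^2(v) = \widehat{\sigma(v)}\,\widehat{\sigma(v)}^{\,*}$ by definition, the equivalence (i) $\Leftrightarrow$ (iii) of that lemma is precisely the asserted pointwise equivalence (the intermediate condition $\widehat{\sigma(v)}^{\,*}\eta = 0$ is not needed here, but is of course also equivalent).

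Finally, since $(v,\eta)$ was an arbitrary admissible pair, I would quantify over all such pairs: $\sigma$ is parallel in the sense of Definition~\ref{def-par-1} precisely when the pointwise condition on $\sigma(v)$ holds for every admissible pair, which by the previous step happens precisely when the pointwise condition on $\sigma^2(v)$ holds for every admissible pair, i.e.\ precisely when $\sigma^2$ is parallel in the sense of Definition~\ref{def-par-2}. I do not anticipate any genuine obstacle here: all the substantive linear algebra has already been carried out in Lemma~\ref{lemma-parallel}, and the only point requiring (minimal) care is that the quantifier structure of the two definitions matches, so that the fixed-pair equivalence lifts verbatim to the global statement.
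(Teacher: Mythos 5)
Your argument is exactly the paper's: the paper also derives the corollary as an immediate pointwise application of the equivalence (i) $\Leftrightarrow$ (iii) in Lemma~\ref{lemma-parallel}, quantified over all admissible pairs $(v,\eta)$. The proposal is correct and complete.
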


\begin{proof}
This is an immediate consequence of Lemma~\ref{lemma-parallel}.
\end{proof}

\begin{lemma}\label{lemma-symmetric-op}
For every $T \in S^+(V)$ the following statements are equivalent:
\begin{enumerate}
\item[(i)] We have $\langle Tc,c \rangle_V = 0$ for all $c \in \partial \mathfrak{C}$.

\item[(ii)] We have $\langle Tc,c \rangle_V = 0$ for all $c \in C$.

\item[(iii)] We have $T(C) \subset U$.

\item[(iv)] We have $T(U) \subset U$ and $C \subset {\rm ker}(T)$.

\item[(v)] We have $C \subset {\rm ker}(T)$.

\item[(vi)] We have $\partial \mathfrak{C} \subset {\rm ker}(T)$.
\end{enumerate}
\end{lemma}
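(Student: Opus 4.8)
The statement is a chain of equivalences for a symmetric nonnegative operator $T \in S^+(V)$, and the natural strategy is to prove a cycle of implications rather than each pairwise equivalence separately. I would arrange the cycle as (v) $\Rightarrow$ (vi) $\Rightarrow$ (i) $\Rightarrow$ (ii) $\Rightarrow$ (iii) $\Rightarrow$ (iv) $\Rightarrow$ (v), since several of these links are essentially immediate from definitions and the decomposition $V = C \oplus U$ with $C = U^\perp$ (with respect to $\langle\cdot,\cdot\rangle_V$; see the remark after Definition~\ref{def-inner-prod}). The one genuine analytic input is the standard fact that for a symmetric nonnegative operator, the quadratic form $c \mapsto \langle Tc,c\rangle_V$ vanishes on a subspace if and only if that subspace is contained in $\ker(T)$; this is the Cauchy--Schwarz argument applied to the semi-inner product $(x,y)\mapsto\langle Tx,y\rangle_V$.

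First I would dispose of the easy links. The implication (v) $\Rightarrow$ (vi) is trivial since $\partial\mathfrak{C} \subset C$. For (vi) $\Rightarrow$ (i): if $c \in \partial\mathfrak{C}$ then $Tc = 0$, so $\langle Tc,c\rangle_V = 0$. For (i) $\Rightarrow$ (ii): an arbitrary $c \in C$ can be written $c = \sum_{i=1}^m \alpha_i \lambda_i$ in a normed basis of $\mathfrak{C}$, and expanding $\langle Tc,c\rangle_V$ bilinearly one would like to conclude it vanishes; the clean way is to observe that $\langle T\lambda_i,\lambda_i\rangle_V = 0$ for each edge generator $\lambda_i$ by hypothesis~(i), and then use that $T \geq 0$ forces the polarization terms $\langle T\lambda_i,\lambda_j\rangle_V$ to vanish as well (Cauchy--Schwarz: $|\langle T\lambda_i,\lambda_j\rangle_V|^2 \leq \langle T\lambda_i,\lambda_i\rangle_V \langle T\lambda_j,\lambda_j\rangle_V = 0$). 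So in fact it is cleanest to prove (i) $\Rightarrow$ (v) directly by this Cauchy--Schwarz argument, getting $T\lambda_i = 0$ for all $i$ hence $C \subset \ker(T)$, and then route (ii), (iii), (iv), (vi) off of (v). Let me reorganize: prove (i) $\Rightarrow$ (v) $\Rightarrow$ (vi) $\Rightarrow$ (i) as one sub-cycle establishing (i) $\Leftrightarrow$ (v) $\Leftrightarrow$ (vi), and (ii) likewise slots in since (i) $\Rightarrow$ (v) $\Rightarrow$ [$C \subset \ker T$, so $\langle Tc,c\rangle_V = 0$ for all $c\in C$, which is (ii)] and (ii) $\Rightarrow$ (i) trivially.

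For the remaining two conditions (iii) and (iv): from (v), $C \subset \ker(T)$ gives $T(C) = \{0\} \subset U$, which is (iii); and since $T$ is symmetric, $\ker(T) = \mathrm{ran}(T)^\perp$, so $C \subset \ker(T)$ is equivalent to $\mathrm{ran}(T) \subset C^\perp = U$ (orthogonality in $(V,\langle\cdot,\cdot\rangle_V)$), in particular $T(U) \subset U$, giving (iv). Conversely (iii) $\Rightarrow$ (v): if $T(C) \subset U$ then for $c \in C$, $\langle Tc, c\rangle_V \in \langle U, C\rangle_V = 0$ since $C = U^\perp$; but $T \geq 0$ and $\langle Tc,c\rangle_V = 0$ force $Tc = 0$ (Cauchy--Schwarz again), so $C \subset \ker(T)$. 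And (iv) $\Rightarrow$ (v) is immediate since (iv) explicitly contains $C \subset \ker(T)$.

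**The main obstacle.** There is no serious obstacle here; the only point requiring care is invoking the right inner product. Conditions (iii)--(v) involve orthogonal complements and these are to be understood with respect to $\langle\cdot,\cdot\rangle_V$ (under which $C = U^\perp$ and $U = C^\perp$), not the ambient $\langle\cdot,\cdot\rangle_H$ — so symmetry of $T$ and the identity $\ker(T) = \mathrm{ran}(T)^\perp$ must be read in $(V,\langle\cdot,\cdot\rangle_V)$, which is consistent since $\sigma^2 = \hat\sigma\hat\sigma^*$ was defined with adjoint relative to exactly this inner product. The only mildly substantive lemma is the quadratic-form vanishing fact, which I would state inline: \emph{if $T \in S^+(V)$ and $\langle Tx,x\rangle_V = 0$ then $Tx = 0$}, proved by noting $0 \leq \langle T(x + ty), x + ty\rangle_V = 2t\langle Tx,y\rangle_V + t^2\langle Ty,y\rangle_V$ for all $t \in \bbr$ and $y \in V$, forcing $\langle Tx,y\rangle_V = 0$ for all $y$.

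\begin{proof}
Throughout, orthogonal complements and adjoints are taken with respect to the inner product $\langle \cdot,\cdot \rangle_V$ from Definition~\ref{def-inner-prod}; recall that $C = U^{\perp}$ and $U = C^{\perp}$ in $(V,\langle \cdot,\cdot \rangle_V)$, and that $T$ is symmetric and nonnegative with respect to this inner product. We shall repeatedly use the following elementary fact: if $T \in S^+(V)$ and $x \in V$ satisfies $\langle Tx,x \rangle_V = 0$, then $Tx = 0$. Indeed, for every $y \in V$ and every $t \in \bbr$ we have
\begin{align*}
0 \leq \langle T(x+ty),x+ty \rangle_V = 2t \langle Tx,y \rangle_V + t^2 \langle Ty,y \rangle_V,
\end{align*}
which is only possible for all $t \in \bbr$ if $\langle Tx,y \rangle_V = 0$; since $y \in V$ was arbitrary, $Tx = 0$.

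\noindent (i) $\Rightarrow$ (v): Let $(\lambda_1,\ldots,\lambda_m)$ be a normed basis of $\mathfrak{C}$. By (i) we have $\langle T\lambda_i,\lambda_i \rangle_V = 0$ for all $i=1,\ldots,m$, hence $T\lambda_i = 0$ for all $i$ by the fact above. Since $C = \langle \lambda_1,\ldots,\lambda_m \rangle$, this gives $C \subset {\rm ker}(T)$.

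\noindent (v) $\Rightarrow$ (vi): Immediate, since $\partial \mathfrak{C} \subset C$.

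\noindent (vi) $\Rightarrow$ (i): For $c \in \partial \mathfrak{C}$ we have $Tc = 0$, hence $\langle Tc,c \rangle_V = 0$.

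\noindent (v) $\Rightarrow$ (ii): If $C \subset {\rm ker}(T)$, then $Tc = 0$ and thus $\langle Tc,c \rangle_V = 0$ for every $c \in C$.

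\noindent (ii) $\Rightarrow$ (i): Immediate, since $\partial \mathfrak{C} \subset C$.

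\noindent (v) $\Rightarrow$ (iv): Since $T$ is symmetric, ${\rm ker}(T) = {\rm ran}(T)^{\perp}$, so $C \subset {\rm ker}(T)$ is equivalent to ${\rm ran}(T) \subset C^{\perp} = U$. In particular $T(U) \subset U$, and by assumption $C \subset {\rm ker}(T)$, which is (iv).

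\noindent (iv) $\Rightarrow$ (v): Immediate, as (iv) contains the statement $C \subset {\rm ker}(T)$.

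\noindent (v) $\Rightarrow$ (iii): If $C \subset {\rm ker}(T)$, then $T(C) = \{ 0 \} \subset U$.

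\noindent (iii) $\Rightarrow$ (v): Let $c \in C$ be arbitrary. By (iii) we have $Tc \in U$, hence $\langle Tc,c \rangle_V \in \langle U,C \rangle_V = \{ 0 \}$, since $C = U^{\perp}$. By the elementary fact above, $Tc = 0$, and therefore $C \subset {\rm ker}(T)$.

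We have shown (i) $\Leftrightarrow$ (ii) $\Leftrightarrow$ (v) $\Leftrightarrow$ (vi), as well as (iii) $\Leftrightarrow$ (v) and (iv) $\Leftrightarrow$ (v). This establishes the equivalence of all six statements.
\end{proof}
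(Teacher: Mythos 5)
Your proof is correct. Every implication checks out: the elementary fact that $\langle Tx,x\rangle_V = 0$ forces $Tx = 0$ for symmetric nonnegative $T$ is proved correctly (the expansion $2t\langle Tx,y\rangle_V + t^2\langle Ty,y\rangle_V \geq 0$ does use the symmetry of $T$, which you have), and once $(i) \Rightarrow (v)$ is in hand, routing all remaining conditions through $(v)$ is clean and each step is sound, including the identification $\ker(T) = \mathrm{ran}(T)^{\perp}$ in $(V,\langle\cdot,\cdot\rangle_V)$ for $(v) \Rightarrow (iv)$.

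Your route differs from the paper's in organization and in the key technical input. The paper proves the cycle $(i) \Rightarrow (ii) \Rightarrow (iii) \Rightarrow (iv) \Rightarrow (v) \Rightarrow (vi) \Rightarrow (i)$: the step $(i) \Rightarrow (ii)$ is done by diagonalizing $T$ in an orthonormal eigenbasis and observing that $\langle Tc,c\rangle_V = \sum_k x_k |\langle c,f_k\rangle_V|^2$ with $x_k \geq 0$ forces $\langle c,f_k\rangle_V = 0$ on the edges and hence on all of $C$ by linearity; $(ii) \Rightarrow (iii)$ is by polarization; and $(iii) \Rightarrow (iv)$ expands $\langle T(c+u),c+u\rangle_V \geq 0$ to extract $T(U) \subset U$ and then $C \subset \ker(T)$. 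You instead isolate the single lemma ``$T \geq 0$ symmetric and $\langle Tx,x\rangle_V = 0$ imply $Tx = 0$'' and jump directly from $(i)$ to the strongest condition $(v)$, after which everything else is a one-line consequence. Your version is shorter and arguably more transparent, since one lemma replaces the paper's three separate arguments (spectral decomposition, polarization, positivity expansion); the paper's version has the minor virtue of making the intermediate conditions $(ii)$, $(iii)$, $(iv)$ appear naturally as successive strengthenings rather than as corollaries of $(v)$. Both are standard finite-dimensional linear algebra and equally rigorous.
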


\begin{proof}
(i) $\Rightarrow$ (ii): There exist an orthonormal basis $\{ f_1,\ldots,f_d \}$ of $V$ and eigenvalues $x_1,\ldots,x_d \geq 0$ of $T$ such that
\begin{align*}
Tv = \sum_{k=1}^d x_k \langle v,f_k \rangle_V f_k \quad \text{for all $v \in V$.}
\end{align*}
For each $c \in C$ we obtain
\begin{align*}
\langle Tc,c \rangle_V = \sum_{k=1}^d x_k |\langle c,f_k \rangle_V|^2.
\end{align*}
By assumption, we deduce that
\begin{align*}
\langle c,f_k \rangle_V = 0 \quad \text{for all $c \in \partial \mathfrak{C}$ and all $k = 1,\ldots,d$ with $x_k > 0$.}
\end{align*}
This gives us
\begin{align*}
\langle c,f_k \rangle_V = 0 \quad \text{for all $c \in C$ and all $k = 1,\ldots,d$ with $x_k > 0$,}
\end{align*}
and hence, we arrive at $\langle Tc,c \rangle_V = 0$ for all $c \in C$.

\noindent(ii) $\Rightarrow$ (iii): Let $c \in C$ be arbitrary. Then, by polarization, for all $\gamma \in C$ we have
\begin{align*}
\langle Tc,\gamma \rangle_V = \frac{1}{4} \big( \langle T(c+\gamma),c+\gamma \rangle_V - \langle T(c-\gamma),c-\gamma \rangle_V \big) = 0,
\end{align*}
showing that $Tc \in U$.

\noindent(iii) $\Rightarrow$ (iv): For all $c \in C$ and $u \in U$ we have
\begin{align*}
0 &\leq \langle T(c+u),c+u \rangle_V = \langle Tc,c \rangle_V + \langle Tc,u \rangle_V + \langle Tu,c \rangle_V + \langle Tu,u \rangle_V
\\ &= 2 \langle Tu,c \rangle_V + \langle Tu,u \rangle_V.
\end{align*}
Thus, for every $u \in U$ we obtain
\begin{align*}
\langle Tu,c \rangle_V = 0 \quad \text{for all $c \in C$,}
\end{align*}
showing that $Tu \in U$. Moreover, for every $c \in C$ we obtain
\begin{align*}
\langle Tc,u \rangle_V = 0 \quad \text{for all $u \in U$,}
\end{align*}
showing that $Tc \in C$. Therefore, and by assumption, we have $T(C) \subset C$ and $T(C) \subset U$, showing that $C \subset {\rm ker}(T)$.

\noindent The implications (iv) $\Rightarrow$ (v) $\Rightarrow$ (vi) $\Rightarrow$ (i) are obvious.
\end{proof}

Now, let $T : \mathfrak{C} \oplus U \rightarrow S^+(V)$ be an affine mapping. Then there are unique $T_1 \in S^+(V)$ and $T_2 \in L(V,L(V))$ with $T_2(\mathfrak{C} \oplus U) \subset S^+(V)$ such that we have the decomposition
\begin{align}\label{square-decomp}
T v &= T_1 + T_2 v, \quad v \in \mathfrak{C} \oplus U.
\end{align}

\begin{remark}\label{remark-square-affine}
Note that $T_2 u = 0$ for all $u \in U$, because $T_2(\mathfrak{C} \oplus U) \subset S^+(V)$.
\end{remark}

\begin{proposition}\label{prop-char-affine-2}
The following statements are equivalent:
\begin{enumerate}
\item[(i)] $T$ is parallel.

\item[(ii)] We have
\begin{align}\label{C-ker-T1} 
T_1 c &= 0, \quad c \in C,
\\ \label{U-ker-T2} T_2 u &= 0, \quad u \in U,
\\ \label{T2-c} \partial (\mathfrak{C} \ominus \langle c \rangle^+) &\subset {\rm ker}(T_2 c), \quad c \in \partial \mathfrak{C}.
\end{align}
\end{enumerate}
\end{proposition}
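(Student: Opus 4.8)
The plan is to reduce the ``parallel'' property of $T$ to a condition phrased purely in terms of a normed basis of $\frc$, and then to verify the equivalence edge by edge, with Lemma~\ref{lemma-symmetric-op} doing the analytic work. First I would fix a normed basis $(\lambda_1,\ldots,\lambda_m)$ of $\frc$; by Remark~\ref{remark-orth-basis} we may take it orthonormal in $(V,\langle\cdot,\cdot\rangle_V)$, and recall that $U = C^{\perp}$ in $(V,\langle\cdot,\cdot\rangle_V)$. Condition (\ref{U-ker-T2}) holds automatically by Remark~\ref{remark-square-affine}, so only the equivalence of (i) with (\ref{C-ker-T1}) and (\ref{T2-c}) is at stake. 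For $v = c+u$ with $c\in\frc$, $u\in U$ and $\eta\in\frc\subset C$ we have $\langle v,\eta\rangle_V = \langle c,\eta\rangle_V$ (since $\langle u,\eta\rangle_V = 0$) and, by (\ref{square-decomp}) together with $T_2u = 0$, also $Tv = T_1 + T_2c$. Hence $T$ is parallel if and only if
\begin{align*}
\langle (T_1 + T_2c)\eta,\eta\rangle_V = 0 \quad \text{for all $c,\eta\in\frc$ with $\langle c,\eta\rangle_V = 0$.}
\end{align*}
Expanding $c = \sum_i\alpha_i\lambda_i$ and $\eta = \sum_i\beta_i\lambda_i$ with $\alpha_i,\beta_i\geq 0$, orthonormality gives $\langle c,\eta\rangle_V = \sum_i\alpha_i\beta_i$, so the constraint $\langle c,\eta\rangle_V = 0$ means $\alpha_i\beta_i = 0$ for every $i$, i.e.\ $c$ and $\eta$ have disjoint ``supports'' in the basis.

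For the implication (i) $\Rightarrow$ (ii) I would argue as follows. Putting $c = 0$ in the displayed condition yields $\langle T_1\eta,\eta\rangle_V = 0$ for all $\eta\in\frc$, hence $C\subset\ker(T_1)$ by Lemma~\ref{lemma-symmetric-op}; this is (\ref{C-ker-T1}). Fixing an index $i$ and testing with $c = \lambda_i$ and $\eta = \beta\lambda_j$, $j\neq i$, $\beta\geq 0$ (so that $\langle c,\eta\rangle_V = 0$), and using (\ref{C-ker-T1}), one obtains $\langle (T_2\lambda_i)\lambda_j,\lambda_j\rangle_V = 0$ for all $j\neq i$. Since $\lambda_i\in\frc\oplus U$ we have $T_2\lambda_i\in S^+(V)$, so applying Lemma~\ref{lemma-symmetric-op} to the operator $T_2\lambda_i$ and to the truncated cone $\frc\ominus\langle\lambda_i\rangle^+$ — which fits the setting of Appendix~\ref{app-affine} with the orthogonal complement $U\oplus\langle\lambda_i\rangle$, again by Remark~\ref{remark-orth-basis} — gives $\partial(\frc\ominus\langle\lambda_i\rangle^+)\subset\ker(T_2\lambda_i)$. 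As both $\frc\ominus\langle c\rangle^+$ and $\ker(T_2c)$ depend only on the ray $\langle c\rangle^+$, this is exactly (\ref{T2-c}).

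For (ii) $\Rightarrow$ (i), I would take $v = c+u\in\frc\oplus U$ and $\eta\in\frc$ with $\langle c,\eta\rangle_V = 0$, and expand $c,\eta$ in the normed basis, so $\alpha_i\beta_i = 0$ for all $i$. Then
\begin{align*}
\langle (Tv)\eta,\eta\rangle_V = \langle T_1\eta,\eta\rangle_V + \sum_{i=1}^m \alpha_i\,\langle (T_2\lambda_i)\eta,\eta\rangle_V .
\end{align*}
The first term vanishes because $\eta\in C\subset\ker(T_1)$ by (\ref{C-ker-T1}). For each $i$ with $\alpha_i > 0$ we have $\beta_i = 0$, so $\eta\in\langle\lambda_j : j\neq i\rangle$, which is contained in $\ker(T_2\lambda_i)$ since, by (\ref{T2-c}), $\ker(T_2\lambda_i)$ is a subspace containing $\partial(\frc\ominus\langle\lambda_i\rangle^+)$ and hence its linear span; thus that summand vanishes too. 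Therefore $\langle (Tv)\eta,\eta\rangle_V = 0$ for all admissible $v,\eta$, i.e.\ $T$ is parallel.

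The step I expect to be the crux is the edge-wise argument in (i) $\Rightarrow$ (ii): testing the parallel condition on the pairs $(\lambda_i,\lambda_j)$ with $j\neq i$ only pins down $T_2\lambda_i$ on the hyperplane $\langle\lambda_j : j\neq i\rangle$, so Lemma~\ref{lemma-symmetric-op} must be invoked not for $\frc$ but for the truncated cone $\frc\ominus\langle\lambda_i\rangle^+$, which forces the bookkeeping detour $V = \langle\lambda_j : j\neq i\rangle\oplus(U\oplus\langle\lambda_i\rangle)$ and an appeal to Remark~\ref{remark-orth-basis}. Everything else — identifying the constraint $\langle c,\eta\rangle_V = 0$ with disjoint supports, and the linearity and positivity manipulations coming from (\ref{square-decomp}) and $T_2(\frc\oplus U)\subset S^+(V)$ — is routine.
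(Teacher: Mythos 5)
Your proof is correct and follows essentially the same route as the paper's: condition (\ref{U-ker-T2}) from Remark~\ref{remark-square-affine}, then testing the parallel condition at $c=0$ and on pairs of disjoint edges, with Lemma~\ref{lemma-symmetric-op} applied first to $T_1$ on $\frc$ and then to $T_2\lambda_i$ on the truncated cone $\frc\ominus\langle\lambda_i\rangle^+$, and the converse by decomposing $c$ and $\eta$ into edges with disjoint supports and using linearity of $T_2$ together with the kernel inclusions. The only cosmetic difference is that you write the edge decomposition explicitly in an orthonormal basis of $\frc$, where the paper phrases it via linearly independent elements of $\partial\frc$.
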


\begin{proof}
(i) $\Rightarrow$ (ii): Condition (\ref{U-ker-T2}) follows from Remark~\ref{remark-square-affine}. Since $T$ is parallel, for all $c \in \mathfrak{C}$, $u \in U$ and all $\eta \in \mathfrak{C}$ with $\langle c,\eta \rangle_V = 0$ we have
\begin{align}\label{T1-eta}
\langle T_1 \eta,\eta \rangle_V + \langle (T_2 c) \eta,\eta \rangle_V + \langle (T_2 u) \eta,\eta \rangle_V = 0.
\end{align}
Setting $c=u=0$ in (\ref{T1-eta}), we obtain
\begin{align*}
\langle T_1 \eta,\eta \rangle_V = 0 \quad \text{for all $\eta \in \mathfrak{C}$,}
\end{align*}
and hence, by Lemma~\ref{lemma-symmetric-op} we have (\ref{C-ker-T1}). Furthermore, by (\ref{C-ker-T1}), (\ref{U-ker-T2}) and (\ref{T1-eta}) we obtain
\begin{align*}
\langle (T_2 c)\eta,\eta \rangle_V = 0
\end{align*}
for all $c,\eta \in \mathfrak{C}$ with $\langle c,\eta \rangle_V = 0$. For every $c \in \partial \mathfrak{C}$ this yields
\begin{align*}
\langle (T_2 c)\eta,\eta \rangle_V = 0, \quad \eta \in \partial (\mathfrak{C} \ominus \langle c \rangle^+),
\end{align*}
and hence, by Lemma~\ref{lemma-symmetric-op} we obtain (\ref{T2-c}).

\noindent (ii) $\Rightarrow$ (i): Let $v \in \mathfrak{C} \oplus U$ and $\eta \in \mathfrak{C}$ with $\langle v,\eta \rangle_V = 0$ be arbitrary. There exist unique elements $c \in \mathfrak{C}$ and $u \in U$ such that $v = c + u$.
Moreover, there exist linearly independent elements $c_1,\ldots,c_p \in \partial \mathfrak{C}$ for some $p \in \{ 1,\ldots,m \}$ and linearly independent elements $\eta_1,\ldots,\eta_q \in \partial \mathfrak{C}$ for some $q \in \{ 1,\ldots,m \}$ 
such that $c = \sum_{i=1}^p c_i$, $\eta = \sum_{k=1}^q \eta_k$, and $\eta_k \in \partial (\mathfrak{C} \ominus \langle c_i \rangle^+)$ for all $i=1,\ldots,p$ and $k=1,\ldots,q$. Thus, by the decomposition (\ref{square-decomp}) and (\ref{C-ker-T1})--(\ref{T2-c}) we obtain
\begin{align*}
\langle (Tv) \eta,\eta \rangle_V &= \langle T_1 \eta,\eta \rangle_V + \langle (T_2 c) \eta,\eta \rangle_V + \langle (T_2 u) \eta,\eta \rangle_V
\\ &= \sum_{i=1}^p \sum_{k=1}^q \langle (T_2 c_i) \eta_k,\eta \rangle_V = 0,
\end{align*}
proving that $T$ is parallel.
\end{proof}

\begin{remark}
Note that for the canonical state space $\frc \oplus U = \bbr_+^m \times \bbr^{d-m}$ the conditions from Propositions~\ref{prop-char-affine-1} and \ref{prop-char-affine-2} correspond to the admissibility conditions for the local characteristics of affine processes, as, for example, defined in \cite{Filipovic-Mayerhofer}.
\end{remark}

For the rest of this appendix, we prepare further auxiliary results which we will need in this paper.

\begin{definition}\label{def-matrix-of-operator}
Let $X$ and $Y$ be two finite dimensional linear spaces with bases $\lambda = (\lambda_1,\ldots,\lambda_d)$ and $\mu = (\mu_1,\ldots,\mu_n)$, and let $T \in L(X,Y)$ be a linear operator.
\begin{enumerate}
\item We denote by $T^{\lambda,\mu} \in \bbr^{n \times d}$ the matrix of $T$ with respect to the bases $\lambda$ and $\mu$; that is, we have
\begin{align*}
T \lambda_j = \sum_{i=1}^n T_{ij}^{\lambda,\mu} \mu_i \quad \text{for all $j=1,\ldots,d$.}
\end{align*}
\item We denote by $\Psi^{\lambda,\mu} : L(X,Y) \to \bbr^{n \times d}$ the canonical isomorphism $\Psi^{\lambda,\mu} T = T^{\lambda,\mu}$.
\end{enumerate}
\end{definition}

\begin{definition}\label{def-sigma-coordinates}
For $\sigma \in V^n$ and a basis $\lambda = (\lambda_1,\ldots,\lambda_d)$ of $V$ we denote by $\sigma^{\lambda} \in \bbr^{n \times d}$ the matrix such that $\sigma = \sigma^{\lambda} \cdot \lambda$; that is
\begin{align*}
\sigma_i = \sum_{j=1}^d \sigma_{ij}^{\lambda} \lambda_j \quad \text{for all $i=1,\ldots,n$.}
\end{align*}
\end{definition}

\begin{lemma}\label{lemma-matrix-1}
For each $\sigma \in V^n$ and every basis $\lambda = (\lambda_1,\ldots,\lambda_d)$ of $V$ we have $\hat{\sigma}^{e,\lambda} = (\sigma^{\lambda})^{\top}$.
\end{lemma}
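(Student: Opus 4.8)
The plan is to prove Lemma~\ref{lemma-matrix-1} by simply unwinding the three definitions involved and comparing coefficients with respect to the basis $\lambda = (\lambda_1,\ldots,\lambda_d)$ of $V$. First I would recall that, by the definition of the operator $\hat\sigma \in L(\bbr^n,V)$, we have $\hat\sigma e_k = \sigma_k$ for each $k = 1,\ldots,n$, where $e = (e_1,\ldots,e_n)$ is the standard basis of $\bbr^n$. Expanding each $\sigma_k \in V$ in the basis $\lambda$ according to Definition~\ref{def-sigma-coordinates} then gives $\sigma_k = \sum_{j=1}^d \sigma_{kj}^{\lambda} \lambda_j$ for each $k$.

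On the other hand, applying Definition~\ref{def-matrix-of-operator} to the operator $\hat\sigma \in L(\bbr^n,V)$ with domain basis $e$ and codomain basis $\lambda$, the matrix $\hat\sigma^{e,\lambda} \in \bbr^{d \times n}$ is characterized by $\hat\sigma e_k = \sum_{j=1}^d (\hat\sigma^{e,\lambda})_{jk}\, \lambda_j$ for each $k = 1,\ldots,n$. Since $\lambda$ is a basis of $V$, the coefficients in these two expansions of $\hat\sigma e_k = \sigma_k$ must agree, so $(\hat\sigma^{e,\lambda})_{jk} = \sigma_{kj}^{\lambda}$ for all $j = 1,\ldots,d$ and $k = 1,\ldots,n$. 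The right-hand side is exactly the $(j,k)$-entry of $(\sigma^{\lambda})^{\top}$, which is the claimed identity.

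There is essentially no genuine obstacle in this argument; it is purely a bookkeeping statement. The only point requiring a little care is keeping the index conventions of Definition~\ref{def-matrix-of-operator} and Definition~\ref{def-sigma-coordinates} straight, and observing that the transpose in the statement arises precisely because in $\sigma^{\lambda}$ the \emph{first} index of the coefficient labels the vector $\sigma_i$, whereas the matrix convention of Definition~\ref{def-matrix-of-operator} places the codomain-basis index first. I would also briefly note that the dimensions are consistent, since $\hat\sigma \colon \bbr^n \to V$ forces $\hat\sigma^{e,\lambda} \in \bbr^{d\times n}$, which matches $(\sigma^{\lambda})^{\top} \in \bbr^{d\times n}$.
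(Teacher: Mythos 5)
Your proposal is correct and is essentially identical to the paper's proof: both expand $\hat\sigma e_k = \sigma_k$ in the basis $\lambda$ using Definition~\ref{def-sigma-coordinates} and read off the entries of $\hat\sigma^{e,\lambda}$ from Definition~\ref{def-matrix-of-operator}, identifying them with the transposed coefficients. The paper states this in one display; your version just makes the index bookkeeping slightly more explicit.
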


\begin{proof}
For each $j=1,\ldots,n$ we have
\begin{align*}
\hat{\sigma} e_j = \sigma_j = \sum_{i=1}^d \sigma_{ji}^{\lambda} \lambda_i = \sum_{i=1}^d (\sigma^{\lambda})_{ij}^{\top} \lambda_i,
\end{align*}
finishing the proof.
\end{proof}

There exists an orthonormal basis $\lambda = (\lambda_1,\ldots,\lambda_d)$ of $V$ with respect to $\langle \cdot,\cdot \rangle_V$ such that $\frc = \langle \lambda_1,\ldots,\lambda_m \rangle^+$, where $m = \dim C$. From now on, we fix such an orthonormal basis $\lambda$.

\begin{lemma}\label{lemma-matrices}
For each $\sigma \in V^n$ we have $(\sigma^2)^{\lambda,\lambda} = (\sigma^{\lambda})^{\top} \cdot \sigma^{\lambda}$.
\end{lemma}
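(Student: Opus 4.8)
The plan is to reduce the identity to two elementary facts about matrices of linear operators, applied through the isomorphisms $\Psi^{\lambda,\mu}$ from Definition~\ref{def-matrix-of-operator}: first, that the matrix of a composition of linear operators is the product of the matrices taken with respect to matching bases; and second, that the matrix of the adjoint of a linear operator between inner product spaces, taken with respect to \emph{orthonormal} bases, equals the transpose of the matrix of the operator. The second fact is available here because $e = (e_1,\ldots,e_n)$ is orthonormal for the standard inner product on $\bbr^n$ and $\lambda = (\lambda_1,\ldots,\lambda_d)$ was just fixed to be orthonormal for $\langle\cdot,\cdot\rangle_V$; moreover $\hat\sigma^*$ is, by definition, the adjoint with respect to precisely these two inner products.

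Concretely, I would unwind $\sigma^2 = \hat\sigma\hat\sigma^*$ with $\hat\sigma \in L(\bbr^n,V)$ and $\hat\sigma^* \in L(V,\bbr^n)$ and use the composition rule to write
\[
(\sigma^2)^{\lambda,\lambda} = (\hat\sigma\hat\sigma^*)^{\lambda,\lambda} = \hat\sigma^{e,\lambda}\cdot(\hat\sigma^*)^{\lambda,e}.
\]
Then the adjoint-transpose fact gives $(\hat\sigma^*)^{\lambda,e} = (\hat\sigma^{e,\lambda})^{\top}$, so that $(\sigma^2)^{\lambda,\lambda} = \hat\sigma^{e,\lambda}\cdot(\hat\sigma^{e,\lambda})^{\top}$. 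Finally I would invoke Lemma~\ref{lemma-matrix-1}, which gives $\hat\sigma^{e,\lambda} = (\sigma^{\lambda})^{\top}$, and substitute to obtain $(\sigma^2)^{\lambda,\lambda} = (\sigma^{\lambda})^{\top}\cdot((\sigma^{\lambda})^{\top})^{\top} = (\sigma^{\lambda})^{\top}\cdot\sigma^{\lambda}$, which is the assertion. A dimension check ($\sigma^{\lambda}\in\bbr^{n\times d}$, hence $(\sigma^{\lambda})^{\top}\sigma^{\lambda}\in\bbr^{d\times d}$, matching $\sigma^2\in L(V)$ with $\dim V = d$) confirms consistency.

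There is essentially no hard step: the only point that deserves care is verifying that the hypotheses behind the adjoint-transpose identity really hold, i.e. that both bases in play are orthonormal for the inner products with respect to which $\hat\sigma^*$ was defined — and this is exactly what the standing choice of $\lambda$ as an orthonormal basis of $(V,\langle\cdot,\cdot\rangle_V)$ made just above the statement guarantees. As an alternative route, one could bypass the two abstract facts and check the identity entrywise, using that the $(i,j)$ entry of $(\sigma^2)^{\lambda,\lambda}$ is $\langle \sigma^2\lambda_j,\lambda_i\rangle_V = \langle \hat\sigma^*\lambda_j,\hat\sigma^*\lambda_i\rangle_{\bbr^n} = \sum_{k=1}^n \langle\sigma_k,\lambda_j\rangle_V\langle\sigma_k,\lambda_i\rangle_V = \sum_{k=1}^n \sigma_{kj}^{\lambda}\sigma_{ki}^{\lambda}$, with orthonormality of $\lambda$ used in the first and last equalities; this is precisely the $(i,j)$ entry of $(\sigma^{\lambda})^{\top}\sigma^{\lambda}$. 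I would present the first, coordinate-free route as the main argument since it is shorter and makes the structural reason transparent.
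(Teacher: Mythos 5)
Your proposal is correct and follows essentially the same route as the paper: the paper's proof is exactly the chain $(\sigma^2)^{\lambda,\lambda} = (\hat{\sigma}\hat{\sigma}^*)^{\lambda,\lambda} = \hat{\sigma}^{e,\lambda}\cdot(\hat{\sigma}^*)^{\lambda,e} = \hat{\sigma}^{e,\lambda}\cdot(\hat{\sigma}^{e,\lambda})^{\top} = (\sigma^{\lambda})^{\top}\cdot\sigma^{\lambda}$, using the composition rule, the adjoint--transpose identity for orthonormal bases, and Lemma~\ref{lemma-matrix-1}. Your added care about the orthonormality hypotheses and the entrywise alternative are fine but not needed beyond what the paper records.
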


\begin{proof}
By Lemma~\ref{lemma-matrix-1} we have
\begin{align*}
(\sigma^2)^{\lambda,\lambda} = (\hat{\sigma} \hat{\sigma}^*)^{\lambda,\lambda} = \hat{\sigma}^{e,\lambda} \cdot (\hat{\sigma}^*)^{\lambda,e} = \hat{\sigma}^{e,\lambda} \cdot (\hat{\sigma}^{e,\lambda})^{\top} = (\sigma^{\lambda})^{\top} \cdot \sigma^{\lambda}, 
\end{align*}
completing the proof.
\end{proof}

\begin{lemma}\label{lemma-extend-space}
Let $\sigma \in V^n$ be arbitrary, and let $W$ be a finite dimensional subspace such that $V \subset W$. Furthermore, let $\mu = (\mu_1,\ldots,\mu_p)$ be a basis of $W$ such that $\lambda_i = \mu_i$ for all $i=1,\ldots,d$. Then we have
\begin{align*}
(\sigma^2)^{\mu,\mu} = \left(
\begin{array}{cc}
(\sigma^2)^{\lambda,\lambda} & 0
\\ 0 & 0
\end{array}
\right).
\end{align*}
\end{lemma}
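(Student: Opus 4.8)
The plan is to deduce this from the matrix identity $(\sigma^2)^{\lambda,\lambda} = (\sigma^{\lambda})^{\top} \cdot \sigma^{\lambda}$ of Lemma~\ref{lemma-matrices}, by observing that, once $\sigma \in V^n$ is regarded as an element of $W^n$ through the inclusion $V \subset W$, its coordinate vectors with respect to the basis $\mu$ are supported on the first $d$ entries. Thus the whole computation will be the one from the proof of Lemma~\ref{lemma-matrices}, carried out one space up, and the block structure will appear automatically from the block structure of the coordinate matrix of $\sigma$.

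Concretely, I would first equip $W$ with an inner product $\langle \cdot,\cdot \rangle_W$ that restricts to $\langle \cdot,\cdot \rangle_V$ on $V$ and for which $\mu$ is orthonormal; this is possible since $(\mu_1,\ldots,\mu_d) = (\lambda_1,\ldots,\lambda_d)$ is already orthonormal in $(V,\langle \cdot,\cdot \rangle_V)$. Since $\hat{\sigma} e_k = \sigma_k \in V$ for each $k$, one checks that $\hat{\sigma}^*$ (formed in $W$) annihilates the orthogonal complement of $V$ and that $\sigma^2 = \hat{\sigma} \hat{\sigma}^* \in S^+(W)$ restricts on $V$ to the operator $\sigma^2 \in S^+(V)$, so that $(\sigma^2)^{\mu,\mu}$ is unambiguous (equivalently, $(\sigma^2)^{\mu,\mu}$ is the matrix of the extension of $\sigma^2 \in S^+(V)$ by zero on $\langle \mu_{d+1},\ldots,\mu_p \rangle$). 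Next I would record that, because each $\sigma_i$ lies in $V = \langle \mu_1,\ldots,\mu_d \rangle$, the coordinate matrix $\sigma^{\mu} \in \bbr^{n \times p}$ (in the sense of Definition~\ref{def-sigma-coordinates}, applied in $W$) has its first $d$ columns equal to $\sigma^{\lambda}$ and its last $p - d$ columns zero. Then the analogue of Lemma~\ref{lemma-matrix-1} for the orthonormal basis $\mu$ of $W$ gives $\hat{\sigma}^{e,\mu} = (\sigma^{\mu})^{\top}$, and repeating the computation in the proof of Lemma~\ref{lemma-matrices} verbatim in $W$, namely $(\sigma^2)^{\mu,\mu} = (\hat{\sigma} \hat{\sigma}^*)^{\mu,\mu} = \hat{\sigma}^{e,\mu} \cdot (\hat{\sigma}^{e,\mu})^{\top} = (\sigma^{\mu})^{\top} \cdot \sigma^{\mu}$, and multiplying out the block structure of $\sigma^{\mu}$, leaves $(\sigma^{\lambda})^{\top} \cdot \sigma^{\lambda}$ in the upper-left $d \times d$ block and zeros in all remaining blocks. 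A final application of Lemma~\ref{lemma-matrices} identifies this upper-left block with $(\sigma^2)^{\lambda,\lambda}$, which is the claim.

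The only step demanding some care — rather than a genuine obstacle — is checking the consistency of the two inner-product structures: one must be sure that ``$\sigma^2$'' as an operator on $W$ and ``$\sigma^2$'' as an operator on $V$ agree on $V$, which is precisely why $\langle \cdot,\cdot \rangle_W$ is chosen to extend $\langle \cdot,\cdot \rangle_V$ and to keep $\mu_{d+1},\ldots,\mu_p$ orthogonal to $V$. Once this bookkeeping is in place, everything reduces to transcribing the proofs of Lemmas~\ref{lemma-matrix-1} and \ref{lemma-matrices} into the larger space, and no new ideas are required.
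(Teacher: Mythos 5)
Your proposal is correct and follows essentially the same route as the paper: observe that $\sigma^{\mu} = (\sigma^{\lambda} \ \ 0)$, apply the identity $(\sigma^2)^{\mu,\mu} = (\sigma^{\mu})^{\top}\cdot\sigma^{\mu}$ of Lemma~\ref{lemma-matrices} transcribed to $W$, and multiply out the blocks. The only difference is that you make explicit the choice of an inner product on $W$ extending $\langle\cdot,\cdot\rangle_V$ with $\mu$ orthonormal, a bookkeeping point the paper leaves implicit; this is a harmless (indeed welcome) addition rather than a different argument.
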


\begin{proof}
Note that the matrix $\sigma^{\mu} \in \bbr^{n \times p}$ is given by
\begin{align*}
\sigma^{\mu} = \left(
\begin{array}{cc}
\sigma^{\lambda} & 0
\end{array}
\right).
\end{align*}
Therefore, by Lemma~\ref{lemma-matrices} we obtain
\begin{align*}
(\sigma^2)^{\mu,\mu} &= (\sigma^{\mu})^{\top} \cdot \sigma^{\mu} = \left(
\begin{array}{c}
(\sigma^{\lambda})^{\top}
\\ 0
\end{array}
\right) \cdot \left(
\begin{array}{cc}
\sigma^{\lambda} & 0
\end{array}
\right) 
\\ &= \left(
\begin{array}{cc}
(\sigma^{\lambda})^{\top} \cdot \sigma^{\lambda} & 0
\\ 0 & 0
\end{array}
\right) = \left(
\begin{array}{cc}
(\sigma^2)^{\lambda,\lambda} & 0
\\ 0 & 0
\end{array}
\right),
\end{align*}
completing the proof.
\end{proof}

\begin{lemma}\label{lemma-extend-sigma}
Let $\sigma \in V^n$ and $m \in \bbn$ with $n \leq m$ be arbitrary. We define $\tau \in V^m$ as $\tau_k := \sigma_k$ for $k=1,\ldots,n$ and $\tau_k := 0$ for $k=n+1,\ldots,m$. Then we have $\sigma^2 = \tau^2$.
\end{lemma}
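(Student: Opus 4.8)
The plan is to unwind the definition $\sigma^2 = \hat{\sigma}\hat{\sigma}^*$ and to compute the operator $\hat{\sigma}\hat{\sigma}^* \in L(V)$ explicitly in terms of the vectors $\sigma_1,\ldots,\sigma_n$. First I would note that for every $v \in V$ the adjoint $\hat{\sigma}^* : V \to \bbr^n$ acts by $\hat{\sigma}^* v = \sum_{k=1}^n \langle v,\sigma_k \rangle_V e_k$, since $\langle \hat{\sigma}^* v, e_k \rangle_{\bbr^n} = \langle v, \hat{\sigma} e_k \rangle_V = \langle v, \sigma_k \rangle_V$ for each $k$; here one uses the standard inner product on $\bbr^n$ and the inner product $\langle \cdot,\cdot \rangle_V$ from Definition~\ref{def-inner-prod}, exactly as in the definition of $\sigma^2$. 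Applying $\hat{\sigma}$ then gives
\[
\sigma^2 v = \hat{\sigma}\hat{\sigma}^* v = \sum_{k=1}^n \langle v,\sigma_k \rangle_V \, \sigma_k, \qquad v \in V .
\]

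Next I would run the identical computation for $\tau \in V^m$, obtaining $\tau^2 v = \sum_{k=1}^m \langle v,\tau_k \rangle_V \, \tau_k$ for $v \in V$. Since $\tau_k = \sigma_k$ for $k = 1,\ldots,n$ and $\tau_k = 0$ for $k = n+1,\ldots,m$, every summand with index $k > n$ vanishes, so the sum collapses to $\sum_{k=1}^n \langle v,\sigma_k \rangle_V \, \sigma_k = \sigma^2 v$. As $v \in V$ is arbitrary, this yields $\sigma^2 = \tau^2$.

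A slightly more structural alternative, which I might use instead, is to factor $\hat{\tau} = \hat{\sigma} \circ \pi$, where $\pi : \bbr^m \to \bbr^n$ is the orthogonal projection onto the first $n$ coordinates (equivalently $\pi = \iota^*$ for the canonical inclusion $\iota : \bbr^n \hookrightarrow \bbr^m$): both maps send $e_k$ to $\sigma_k$ for $k \leq n$ and to $0$ for $k > n$. Taking adjoints gives $\hat{\tau}^* = \pi^* \hat{\sigma}^* = \iota\,\hat{\sigma}^*$, and since $\pi\iota = \mathrm{id}_{\bbr^n}$ we conclude $\hat{\tau}\hat{\tau}^* = \hat{\sigma}\,(\pi\iota)\,\hat{\sigma}^* = \hat{\sigma}\hat{\sigma}^*$, i.e.\ $\tau^2 = \sigma^2$.

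I do not expect a genuine obstacle here: the statement is a one-line linear-algebra fact. The only point deserving a little care is the bookkeeping of inner products when forming the adjoints $\hat{\sigma}^*$ and $\hat{\tau}^*$ — the standard inner products on $\bbr^n$ and $\bbr^m$ on one side, and $\langle \cdot,\cdot \rangle_V$ on $V$ on the other — but this is precisely the convention fixed in the definition of $\sigma^2$, so no ambiguity arises.
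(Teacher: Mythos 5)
Your proof is correct. It differs from the paper's in presentation: the paper fixes the orthonormal basis $\lambda$ of $V$, writes $\tau^{\lambda} = \bigl(\begin{smallmatrix}\sigma^{\lambda}\\ 0\end{smallmatrix}\bigr)$, and invokes Lemma~\ref{lemma-matrices} to reduce everything to the block matrix identity $(\tau^{\lambda})^{\top}\tau^{\lambda} = (\sigma^{\lambda})^{\top}\sigma^{\lambda}$, whereas you compute the operator $\hat{\sigma}\hat{\sigma}^{*}$ intrinsically, obtaining $\sigma^2 v = \sum_{k=1}^{n}\langle v,\sigma_k\rangle_V\,\sigma_k$ and observing that the extra summands for $\tau$ vanish (your alternative factorization $\hat{\tau}=\hat{\sigma}\circ\pi$ with $\pi\iota=\mathrm{id}_{\bbr^n}$ is equally valid). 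Your route is coordinate-free and self-contained, avoiding the auxiliary matrix machinery of Definitions~\ref{def-matrix-of-operator}--\ref{def-sigma-coordinates} and Lemma~\ref{lemma-matrices}; the paper's route costs that machinery but reuses it uniformly across the neighbouring Lemmas~\ref{lemma-extend-space} and \ref{lemma-extend-sigma} and in Proposition~\ref{prop-sigma-constant}, which is why the author phrases the argument in matrix form. Substantively the two arguments verify the same Gram-operator identity, and there is no gap in yours.
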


\begin{proof}
Note that the matrix $\tau^{\lambda} \in \bbr^{m \times d}$ is given by
\begin{align*}
\tau^{\lambda} = \left(
\begin{array}{c}
\sigma^{\lambda}
\\ 0
\end{array}
\right).
\end{align*}
Therefore, by Lemma~\ref{lemma-matrices} we obtain
\begin{align*}
(\tau^2)^{\lambda,\lambda} = (\tau^{\lambda})^{\top} \cdot \tau^{\lambda} = \left( 
\begin{array}{cc}
(\sigma^{\lambda})^{\top} & 0 
\end{array}
\right) \cdot \left(
\begin{array}{c}
\sigma^{\lambda}
\\ 0
\end{array}
\right)
= (\sigma^{\lambda})^{\top} \cdot \sigma^{\lambda} = (\sigma^2)^{\lambda,\lambda},
\end{align*}
which proves $\tau^2 = \sigma^2$.
\end{proof}

\begin{proposition}\label{prop-sigma-constant}
Let $E \subset H$ be a subset, and let $\sigma : E \to V^n$ be a continuous mapping such that $\sigma^2 : E \to L(V)$ is constant, and we have
\begin{align}\label{sigma-const-1}
\dim \langle \sigma_k(E) \rangle &\leq 1 \quad \text{for all $k = 1,\ldots,n$} \quad \text{and}
\\ \label{sigma-const-2} \langle \sigma_k(E) \rangle \cap \langle \sigma_l(E) \rangle &= \{ 0 \} \quad \text{for all $k,l = 1,\ldots,n$ with $k \neq l$.}
\end{align}
Then $\sigma$ is constant, too. 
\end{proposition}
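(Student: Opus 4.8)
The plan is to trivialise the problem componentwise. For each $k\in\{1,\dots,n\}$ the hypothesis $\dim\langle\sigma_k(E)\rangle\le 1$ means that either $\sigma_k\equiv 0$ on $E$ --- in which case $\sigma_k$ is already constant --- or there is a unit vector $\mu_k\in V$ with $\langle\sigma_k(E)\rangle=\langle\mu_k\rangle$; in the latter case $\sigma_k(h)=f_k(h)\mu_k$ for all $h\in E$, where $f_k(h):=\langle\sigma_k(h),\mu_k\rangle_V$ is continuous because $\sigma_k$ is. Discarding the indices with $\sigma_k\equiv 0$, I may assume $\sigma_k(h)=f_k(h)\mu_k$ for every $k$, with unit vectors $\mu_1,\dots,\mu_n$ that, by (\ref{sigma-const-2}), are pairwise non-collinear.

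Next I would rewrite $\sigma^2$ in terms of the scalar functions $f_k$. From $\hat\sigma(h)^*v=(\langle v,\sigma_k(h)\rangle_V)_{k=1}^n$ and $\sigma^2(h)=\hat\sigma(h)\hat\sigma(h)^*$ one gets, for all $v\in V$,
\[
\sigma^2(h)v=\sum_{k=1}^n\langle v,\sigma_k(h)\rangle_V\,\sigma_k(h)=\sum_{k=1}^n f_k(h)^2\,\langle v,\mu_k\rangle_V\,\mu_k,
\]
hence $\sigma^2(h)=\sum_{k=1}^n f_k(h)^2 P_k$, where $P_k\in L(V)$ is the rank-one operator $P_k v:=\langle v,\mu_k\rangle_V\mu_k$. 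By assumption the left-hand side equals a fixed operator for all $h\in E$, so, fixing a base point $h_0\in E$, we obtain $\sum_{k=1}^n\big(f_k(h)^2-f_k(h_0)^2\big)P_k=0$ for every $h\in E$.

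If the family $(P_1,\dots,P_n)$ is linearly independent in $L(V)$, this forces $f_k(h)^2=f_k(h_0)^2=:c_k$ for all $h\in E$ and all $k$. For $c_k=0$ we get $\sigma_k\equiv 0$, while for $c_k>0$ the continuous function $f_k$ takes values in the two-point set $\{\pm\sqrt{c_k}\}$, hence is constant on each connected component of $E$; therefore $\sigma_k=f_k\mu_k$ is constant, and since in the applications $E$ is a linear subspace (in particular connected), $\sigma$ is constant.

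The decisive --- and to my mind most delicate --- step is the linear independence of the operators $P_k$, which is exactly where conditions (\ref{sigma-const-1})--(\ref{sigma-const-2}) have to be exploited. I would attempt it by induction on $n$, testing a relation $\sum_k a_k P_k=0$ against pairs $(v,v)$ with $v$ chosen to annihilate all but one of the $\mu_k$ --- reducing first, if necessary, to a maximal linearly independent sub-collection of $\{\mu_1,\dots,\mu_n\}$ and then applying a dual-basis pairing. Here one must check carefully whether pairwise non-collinearity of the $\mu_k$ really suffices, or whether a stronger ``direct sum'' property of the $\langle\sigma_k(E)\rangle$ is implicitly needed; the same independence question reappears if one instead starts from the fact that $\sigma^2(h)=\sigma^2(h_0)$ forces $\hat\sigma(h)=\hat\sigma(h_0)Q_h$ for some orthogonal $Q_h$ on $\bbr^n$ and then confines $\sigma_k(h)=\sum_j (Q_h)_{jk}\sigma_j(h_0)$ to the line $\langle\mu_k\rangle$.
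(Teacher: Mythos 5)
Your reduction to $\sigma_k=f_k\mu_k$ with $f_k$ continuous, the identity $\sigma^2(h)=\sum_{k}f_k(h)^2P_k$ with $P_kv=\langle v,\mu_k\rangle_V\mu_k$, and the final continuity step are all correct, and this is essentially the paper's route: the paper chooses a basis $\mu=(\mu_1,\ldots,\mu_d)$ of $V$ with $\sigma_k(E)\subset\langle\mu_k\rangle$, writes the padded volatility as $\tau^{\mu}={\rm diag}(\Phi_1,\ldots,\Phi_d)$, and reads off that each $\Phi_k^2$ is constant from $(\sigma^2)^{\lambda,\lambda}=({\rm Id}^{\lambda,\mu})^{-1}\cdot{\rm diag}(\Phi_1^2,\ldots,\Phi_d^2)\cdot({\rm Id}^{\lambda,\mu})^{-\top}$. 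Your remark that continuity only gives constancy of $f_k$ on connected components is in fact more careful than the paper, whose last line tacitly uses that $E$ is connected (as it is in all applications, where $E$ is $H$ or an affine subspace).

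The step you flag is, however, a genuine gap, and your suspicion is justified: pairwise non-collinearity of the $\mu_k$ does \emph{not} imply linear independence of the $P_k$, and the statement fails if one takes (\ref{sigma-const-1})--(\ref{sigma-const-2}) literally. Take $V=\bbr^2$ with the standard unit vectors $e_1,e_2$, let $n=4$, $\mu_1=e_1$, $\mu_2=e_2$, $\mu_3=(e_1+e_2)/\sqrt{2}$, $\mu_4=(e_1-e_2)/\sqrt{2}$; then $P_1+P_2=P_3+P_4={\rm Id}$, and the continuous, non-constant map given for $t\in[0,1]$ by $\sigma_k(t)=\sqrt{1+t}\,\mu_k$ for $k=1,2$ and $\sigma_k(t)=\sqrt{1-t}\,\mu_k$ for $k=3,4$ satisfies (\ref{sigma-const-1})--(\ref{sigma-const-2}) and has $\sigma^2\equiv 2\,{\rm Id}$. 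What is really needed -- and what the paper's proof assumes without comment when it posits a basis $\mu$ of $V$ with $n\leq d$ and $\sigma_k(E)\subset\langle\mu_k\rangle$ -- is that the lines $\langle\sigma_1(E)\rangle,\ldots,\langle\sigma_n(E)\rangle$ be \emph{jointly} linearly independent. Under that strengthened hypothesis your dual-basis idea does close the argument cleanly: pick $\nu_j\in V$ with $\langle\mu_k,\nu_j\rangle_V=\delta_{kj}$, apply a relation $\sum_ka_kP_k=0$ to $\nu_j$ to get $a_j\mu_j=0$, hence $a_j=0$. So your proof is valid on exactly the same unstated hypothesis as the paper's, and you have correctly located the point at which that hypothesis is consumed; to make either proof airtight one should either add joint independence to the assumptions or restrict to $n\leq\dim V$ situations where it can be derived.
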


\begin{proof}
By (\ref{sigma-const-1}) and (\ref{sigma-const-2}) there exists a basis $\mu = (\mu_1,\ldots,\mu_d)$ of $V$ such that $n \leq d$ and $\sigma_k(E) \in \langle \mu_k \rangle$ for all $k=1,\ldots,n$. We define $\tau : E \to V^d$ as $\tau_k := \sigma_k$ for $k=1,\ldots,n$ and $\tau_k := 0$ for $k=n+1,\ldots,d$. Then, there exist continuous functions $\Phi_1,\ldots,\Phi_d : V \to \bbr$ such that
\begin{align*}
\tau^{\mu} = {\rm diag}(\Phi_1,\ldots,\Phi_d).
\end{align*}
Denoting by $f = (f_1,\ldots,f_d)$ the canonical orthonormal basis of $\bbr^d$, by Lemma~\ref{lemma-matrix-1} we have
\begin{align*}
\hat{\tau}^{f,\lambda} = ({\rm Id}^{\lambda,\mu})^{-1} \cdot \hat{\tau}^{f,\mu} = ({\rm Id}^{\lambda,\mu})^{-1} \cdot (\tau^{\mu})^{\top} = ({\rm Id}^{\lambda,\mu})^{-1} \cdot {\rm diag}(\Phi_1,\ldots,\Phi_d),
\end{align*}
and hence, by Lemma~\ref{lemma-extend-sigma} we obtain
\begin{align*}
(\sigma^2)^{\lambda,\lambda} = (\tau^2)^{\lambda,\lambda} = \hat{\tau}^{f,\lambda} \cdot (\hat{\tau}^{f,\lambda})^{\top} = ({\rm Id}^{\lambda,\mu})^{-1} \cdot {\rm diag}(\Phi_1^2,\ldots,\Phi_d^2) \cdot ({\rm Id}^{\lambda,\mu})^{-\top}.
\end{align*}
Since $\sigma^2$ is constant, we deduce that $\Phi_1^2,\ldots,\Phi_d^2$ are constant. Since $\Phi_1,\ldots,\Phi_d$ are continuous, we deduce that $\tau^{\mu}$ is constant. Consequently, the mapping $\sigma$ is constant, too.
\end{proof}

\end{appendix}

\end{document}